\documentclass[twoside,11pt,a4paper,leqno]{amsart}

\usepackage[latin1]{inputenc}
\usepackage{amsfonts}
\usepackage{amssymb}
\usepackage{amsmath}
\usepackage{amsthm}
\usepackage{cite}
\usepackage{graphicx}
\usepackage{amscd}
\usepackage{color}
\usepackage{bm}
\usepackage{enumerate}
\usepackage[dvips]{epsfig}
\usepackage{psfrag}
\usepackage{epsfig}
\usepackage{xcolor}

\usepackage[abbrev,backrefs]{amsrefs}
\allowdisplaybreaks

\usepackage{cite}
\marginparwidth0.5cm

\textwidth165mm 
\textheight230mm
\parindent0pt
\hoffset-15mm
\voffset-1mm

\sloppy

\newtheorem{theorem}{Theorem}
\newtheorem{definition}[theorem]{Definition}
\newtheorem{corollary}[theorem]{Corollary}
\newtheorem{proposition}[theorem]{Proposition}
\newtheorem{lemma}[theorem]{Lemma}
\newtheorem{remark}[theorem]{Remark}

\numberwithin{theorem}{section}

\numberwithin{equation}{section}

\newcommand{\abs}[1]{\lvert#1\rvert}
\newcommand{\norm}[1]{\lVert#1\rVert}

\newcommand{\R}{\mathbb{R}}

\newcommand{\N}{\mathbb{N}}

\renewcommand{\S}{\mathcal{S}}



\newcommand{\cB}{{\mathcal B}}

\newcommand{\cM}{{\mathcal M}}

\newcommand{\weak}{\rightharpoonup}

\newcommand{\B}{{\bf B}}

\newcommand{\eps}{\varepsilon}

\renewcommand{\phi}{\varphi}



\renewcommand{\epsilon}{\varepsilon}


\let\psfragfont\footnotesize
\let\psfragfonta\tiny

\psfrag{Oe}{\psfragfont$\Omega_\eps$}
\psfrag{z0}{\psfragfont$\tilde z_0$}
\psfrag{Ke}{\psfragfont$K_\eps$}
\psfrag{A}{\psfragfont$A_\eps$}
\psfrag{twoeps}{\psfragfont$2\eps$}
\psfrag{0}{\psfragfont$0$}
\psfrag{Y0}{\psfragfont$Y_0$}
\psfrag{rn-1}{\psfragfont$\R^{N-1}$}
\psfrag{rn+}{\psfragfont$\R^N_+$}
\psfrag{or}{\psfragfonta$0$}
\psfrag{u}{\psfragfont$u$}
\psfrag{u?}{\psfragfont$u$ ?}
\psfrag{e1}{\psfragfonta$e_1$}
\psfrag{Al}{\psfragfonta$A_\lambda?$}
\psfrag{-e1}{\psfragfonta$-e_N$}
\psfrag{B}{\psfragfont$\B$}
\psfrag{k}{\psfragfonta$K$}
\psfrag{eh}{\psfragfonta$e_H$}
\psfrag{alh}{\psfragfonta$\alpha_H$}
\psfrag{sigma}{\psfragfonta$\Sigma$}
\psfrag{om}{\psfragfonta$\Omega$}
\psfrag{om1}{\psfragfonta$\Omega_1$}
\psfrag{om2}{\psfragfonta$\Omega_2$}
\psfrag{dh}{\psfragfonta$T_\lambda$}
\psfrag{ph}{\psfragfonta$T_\lambda$}
\psfrag{h}{\psfragfonta$K_\lambda$}
\psfrag{x}{\psfragfont$x$}
\psfrag{z}{\psfragfont$z$}
\psfrag{xl}{\psfragfont$x^\lambda$}
\psfrag{xk}{\psfragfonta$y_k$}
\psfrag{ex}{\psfragfont$e$}
\psfrag{eh}{\psfragfont$e_H$}
\psfrag{y}{\psfragfont$y$}
\psfrag{o}{\psfragfont$0$}
\psfrag{x1}{\psfragfont$\overline x$}
\psfrag{x_1}{\psfragfont$x_1$}
\psfrag{y1}{\psfragfont$\overline y$}
\psfrag{yl}{\psfragfont$y^\lambda$}
\psfrag{lam}{\psfragfont$ $}
\psfrag{b}{\psfragfont$\B$}
\psfrag{pb}{\psfragfont$\partial \B$}
\psfrag{sr}{\psfragfont$S_r$}

\begin{document}
	
	\title[A sharp threshold in dimension $N$]{A sharp threshold for Trudinger-Moser type  inequalities with logarithmic kernels in dimension $N$}

	\author{Alessandro Cannone and Silvia Cingolani}
	\address{A. Cannone \hfill\break
		Dipartimento di Matematica,  Universit\`{a} degli Studi di Bari Aldo Moro \hfill\break Via Orabona 4, 70125  Bari, Italy}
	\email{alessandro.cannone@uniba.it}
\address{S. Cingolani \hfill\break
	Dipartimento di Matematica,  Universit\`{a} degli Studi di Bari Aldo Moro \hfill\break Via Orabona 4, 70125  Bari, Italy}
\email{silvia.cingolani@uniba.it}

	\date{}
	
	\keywords{Trudinger-Moser inequality, extremal functions, threshold, logarithmic kernel, $N$ dimension}
	
	\subjclass[2000]{35J60, 35Q40, 31B10, 35B33}


	
\begin{abstract}	
In the paper we investigate  Trudinger-Moser type inequalities in presence of logarithmic kernels in dimension $N$.
A sharp threshold, depending on $N$, is detected for the existence of extremal functions or  blow-up, where the domain is the ball or the entire space $\R^N$. 
We also show that the extremal functions satisfy suitable Euler-Lagrange equations. 
When the domain is the entire space, such  equations can be derived by a $N$-Laplacian Schr\"odinger equation strongly coupled with a higher order fractional Poisson's equation.
The results extends \cite{CiWe2} to any dimension $N \geq 2$.
\end{abstract}


\maketitle

\tableofcontents

\section{Introduction}\label{sec:introduction}

\bigskip

Within PDE theory, the class of nonlocal interactions equations, involving logarithmic kernels, has gained great interest in the last decades. 
Such nonlocal problems have arisen by applications in different contexts, ranging from vortex theory,
statistical dynamics of selfgravitating clouds \cite{Suzuki,W},  quantum theory for crystals 
\cite{Dolbeault-Perthame},  to the description of vortices in turbulent Euler flows \cite{CLMP}.

The first results were based on numerical methods. For a long time the rigorous analitycal study remained a quite open field, due to the difficulties to handle sign-changing kernels,  not bounded from above nor from below. 
Recently variational results have been employed to treat integro-differential equations, involving logarithmic convolution potentials, where classical PDE theory fails (see  \cite{BCS,BVS,CiJe,CiWe,CiWeYu,DuWe,Masaki,Masaki2,stubbe,SV}).

Recently, in \cite{CiWe2},  planar logarithmic Trudinger-Moser type inequalities  
and characterizations of the critical nonlinear growth rates for these inequalities are  established.

In this paper, we wish to extend the results in 
\cite{CiWe2}  to any dimension $N \geq 2$.
This problem contains some new and delicate aspects related to the quasilinear context in which the maximization will set up.



\smallskip
Firstly we consider the unit ball 
$$
\cB_1:= \{u \in W^{1,N}_0(B_1)\::\: |\nabla u|_N \le 1 \}
$$
where $|\cdot |_N $ is the standard Lebesgue norm  of $L^N(\R^N)$, $N \geq 2$.

We aim to maximize  the quantity
\begin{equation}\label{rottt}
	m_1(G,N) := \sup_{u \in \cB_1} \Phi(u) 
\end{equation}
where 
\begin{equation}  \label{eq:logarithmic-convolution-kernel}
\Phi(u)= \int_{B_1} \int_{B_1} \ln \frac{1}{|x-y|}G(u(x))G(u(y))\,dx dy, 
\end{equation} 
and  $G$ satisfies the assumption
\begin{itemize}
\item[$(G_0)$] $G: \R \to [0,\infty)$ is even, continuous, and  strictly increasing on $[0,\infty)$. Moreover, there exist constants $\alpha,c>0$ with 
	\begin{equation}
		\label{eq:general-growth-condition}
		G(t) \le c e^{\alpha |t|^{N/(N-1)}}  \qquad \text{for $t \in \R$.}
	\end{equation}
\end{itemize}

Since the logarithmic kernel function in (\ref{eq:logarithmic-convolution-kernel}) changes sign, it is not a priori clear that the double integral in (\ref{eq:logarithmic-convolution-kernel}) has a well defined value. This can be proved, splitting  the kernel $\ln \frac{1}{|\,\cdot\,|}$ into its positive and negative part and seeing $\Phi$ as the difference of two finite functionals $\Phi_\pm$.

\bigskip
In order to study the maximization problem  we stress that  the value 
 $\alpha_N:= N \omega_{N-1}^{1/(N-1)}$, where $\omega_{N-1}$ is the measure of the surface of the unit ball of $\R^N$, plays a key role.
We stress that $\omega_{N-1}$ tends to zero as the dimension goes to $+ \infty$.

We begin to remark that if  $G$ satisfies $(G_0)$ with $\alpha < \alpha_N$ then 
\begin{equation}\label{rottt}
	m_1(G,N) := \sup_{u \in \cB_1} \Phi(u) <\infty.
\end{equation}
Indeed, let $u \in \cB_1 \cap L^\infty(B_1)$, and let $v:= 1_{B_1}G(u)$. Then $v \le c_1 e^{\alpha |u|^{N/(N-1)}}$ and 
$$
v \ln v \le c_1 e^{\alpha |u|^{N/(N-1)}} \Bigl(\alpha |u|^{N/(N-1)} + \ln c_1 \Bigr) \le c_2 e^{\alpha_N |u|^{N/(N-1)}}
$$
with a constant $c_2>0$. By the Trudinger-Moser inequality 
 \cites{trudinger,moser}
\begin{equation}\label{dis}
	\sup_{u \in \cB_1} \int_\Omega e^{\alpha_N |u|^{N/(N-1)}} dx = c(B_1)  < + \infty 
\end{equation}
it thus follows that 
\begin{equation}
	\label{eq:remark-intro-3}
	|v|_1 \le c_1 \int_{B_1}e^{\alpha_N |u|^{N/(N-1)}}dx \le c_1 c(B_1) =:c_3 \qquad \text{and}\qquad\int_{\R^N} v \ln v \,dx \le c_3 c(B_1)=: c_4. 
\end{equation}
From (\ref{rottt}) and the logarithmic Hardy-Littlewood-Sobolev inequality
\cite[Theorem 2]{beckner},
we infer that 
\begin{align*}
	\label{eq:remark-intro-4}
	\Phi(u)= \int_{\R^N} \int_{\R^N} \ln \frac{1}{|x-y|}v(x)v(y)\,dx dy 
	&\le \frac{|v|_{1}}{2}\Bigl(|v|_1 \bigl(c_5 +\bigl|\ln |v|_1\bigr|\bigr) + \int_{\R^N} v \ln v\,dx\Bigr)\\
	&\le \frac{c_3}{2}\bigl(c_3 \bigl(c_5 + |\ln c_3|\bigr) + c_4\bigr)<\infty, 
\end{align*}
where $c_5 >0$ is an explicit constant given in \cite[Theorem 2]{beckner}. For general $u \in \cB_1$, the same inequality follows by approximation, and hence we have $m_1(G,N) < \infty$. We emphasize that this argument does not apply in the case where $G$ has critical growth.



As in 
\cite{CiWe2}, in order to derive  Trudinger-Moser type inequalities in $\cB_1$ and  the critical nonlinear growth rates for these inequalities,
we need to distinguish different forms of asymptotic growth of the nonlinearity $G$.

\smallskip

\begin{definition}
	Let $\beta \in \R$. We say that $F:\R \to \R$ has 
        \begin{itemize}
        \item[(i)] at most $\beta$-critical growth if $|F(s)| \leq c e^{\alpha_N |s|^{N/(N-1)}}(1+|s|)^\beta$ for $s \in \R$ with some constant $c>0$.
        \item[(ii)] at least $\beta$-critical growth if there exist $s_0, c>0$ with the property that 
$$
|F(s)| \ge c\ e^{\alpha_N |s|^{N/(N-1)}}|s|^\beta\qquad \text{for $|s| \ge s_0$.}
$$        
\end{itemize}
\end{definition}

\bigskip

In the following theorem, we deal with the critical growths.
\begin{theorem}
	\label{sec:introduction-main-thm-C-F2}
	Suppose that $G$ satisfies $(G_0)$.
	\begin{enumerate}
		\item If $G$ has at most $\beta$-critical growth for some $\beta \le -\frac{N}{2(N-1)}$, then 
$$
m_1(N, G) := \sup_{u \in \cB_1} \Phi(u) <\infty.
$$
		\item If $G$ has at most $\beta$-critical growth for some $\beta < -\frac{N}{2(N-1)}$, then $m_1(N,G)$ is attained, and every maximizer for $\Phi$ in $\cB_1$ is, up to sign, a radial and radially decreasing function in $\cB_1$.\\  
\item If $G$ has at least $\beta$ critical growth for some $\beta>-\frac{N}{2(N-1)} $, then 
$m_1(N,G)= \infty$.
\end{enumerate}
\end{theorem}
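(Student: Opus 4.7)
My plan is to follow the scheme of \cite{CiWe2} for the planar case, adapted to general $N \ge 2$ by systematically replacing $4\pi$ with $\alpha_N = N\omega_{N-1}^{1/(N-1)}$ and using the $N$-dimensional Trudinger-Moser inequality in place of Moser's. The three assertions share a common toolkit: Beckner's logarithmic Hardy-Littlewood-Sobolev inequality (as already used in the introductory discussion), Schwarz symmetrization applied to $v := \mathbf{1}_{B_1} G(u)$, and an explicit Moser concentration family.

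For (i), starting from the Beckner estimate already displayed above, a uniform bound on $\Phi(u)$ for $u \in \cB_1$ reduces to uniform bounds on $|v|_1$ and on $\int_{B_1} v \ln v \, dx$. The former follows immediately from the sharp Trudinger-Moser inequality, since $\beta \le 0$ gives $(1+|u|)^\beta \le 1$. The latter is the delicate point: using $\ln v \le \alpha_N |u|^{N/(N-1)} + \beta \ln(1+|u|) + \ln c$, one reduces to controlling the weighted integral $\int e^{\alpha_N |u|^{N/(N-1)}}(1+|u|)^{\beta + N/(N-1)} dx$. A naive weighted Trudinger-Moser bound yields only the stronger threshold $\beta \le -N/(N-1)$; to reach the sharp $\beta \le -N/(2(N-1))$, I would exploit the bilinearity of $\Phi$ by splitting $v = v\,\mathbf{1}_{\{|u|\le K\}} + v\,\mathbf{1}_{\{|u|>K\}}$ and balancing the cross and tail contributions, using the pointwise estimate $u^*(r) \le \omega_{N-1}^{-1/N}(\ln(1/r))^{(N-1)/N}$ obtained via H\"older from $|\nabla u^*|_N \le 1$. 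This balancing, which must exactly match the blow-up rate revealed in (iii), is the main technical obstacle.

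For (ii), the strict inequality $\beta < -N/(2(N-1))$ provides just enough slack to pass to the limit. I would first symmetrize: decomposing $\ln(1/|x-y|) = K_1 - K_2$ where $K_1 = \ln(1/|x-y|)\mathbf{1}_{|x-y|\le 1}$ is symmetric decreasing and $K_2 = \ln|x-y|\mathbf{1}_{|x-y|>1}$ has $-K_2$ symmetric decreasing, two applications of Riesz's rearrangement inequality give $\Phi(u^*) \ge \Phi(u)$ while Polya-Szeg\H{o} preserves $\cB_1$, so we may assume each maximizing sequence $u_n$ is radial decreasing. Weak compactness in $W^{1,N}_0(B_1)$ yields $u_n \rightharpoonup u \in \cB_1$. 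To pass to the limit in $\Phi$, fix $\beta' \in (\beta, -N/(2(N-1)))$ and for $\varepsilon > 0$ decompose $G = G_\varepsilon + R_\varepsilon$ with $G_\varepsilon$ bounded and $|R_\varepsilon(t)| \le \varepsilon\, e^{\alpha_N |t|^{N/(N-1)}}(1+|t|)^{\beta'}$. By part (i) applied to $R_\varepsilon$, the residual $\Phi_{R_\varepsilon}$ is uniformly small on $\cB_1$, while $\Phi_{G_\varepsilon}$ is weakly continuous by Vitali's theorem together with the radial Rellich-Kondrachov compactness. Letting $\varepsilon \to 0$ gives $\Phi(u_n) \to \Phi(u) = m_1(N,G)$. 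The stated symmetry of maximizers then follows from the symmetrization step applied to the maximizer itself.

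For (iii), I would use the standard Moser concentration sequence
\begin{equation*}
u_k(x) = \omega_{N-1}^{-1/N}\begin{cases}(\ln k)^{(N-1)/N} & |x|\le 1/k,\\ (\ln k)^{-1/N}\ln(1/|x|) & 1/k \le |x|\le 1,\\ 0 & |x|\ge 1,\end{cases}
\end{equation*}
for which a direct computation gives $|\nabla u_k|_N = 1$, hence $u_k \in \cB_1$. On the core $B_{1/k}$ one has $\alpha_N u_k^{N/(N-1)} = N\ln k$ and $u_k \sim (\ln k)^{(N-1)/N}$, so the at-least-$\beta$ growth hypothesis yields $G(u_k) \ge c\, k^N (\ln k)^{\beta(N-1)/N}$ there. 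Restricting the double integral defining $\Phi(u_k)$ to $B_{1/k}\times B_{1/k}$ and rescaling, the mean of $\ln(1/|x-y|)$ over this cube equals $\ln k + O(1)$, whence
\begin{equation*}
\Phi(u_k) \ge c\, k^{2N}(\ln k)^{2\beta(N-1)/N}\cdot k^{-2N}\ln k = c\,(\ln k)^{2\beta(N-1)/N + 1}.
\end{equation*}
The exponent is strictly positive precisely when $\beta > -N/(2(N-1))$, giving $\Phi(u_k) \to \infty$ and hence $m_1(N,G) = \infty$; this simultaneously confirms sharpness of the threshold detected in (i)--(ii).
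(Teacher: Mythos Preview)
Your argument for (iii) is correct and essentially identical to the paper's Proposition~4.3: the Moser concentration sequence yields $\Phi(u_k)\gtrsim (\ln k)^{2\beta(N-1)/N+1}\to\infty$ exactly when $\beta>-\tfrac{N}{2(N-1)}$.

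For (i), you correctly observe that the Beckner/log-HLS route only reaches $\beta\le -\tfrac{N}{N-1}$ and that one must instead work bilinearly after symmetrizing. However, your proposed fixed-level split $v=v\mathbf{1}_{\{|u|\le K\}}+v\mathbf{1}_{\{|u|>K\}}$ is not what makes the sharp threshold work: a constant cutoff $K$ does not cancel the $\ln(1/r)$ weight in the radial double integral. The paper's key device (Lemma~3.3) is an $r$-\emph{dependent} threshold: one splits $(0,1]$ into $A^{\pm}=\{r:\,u(r)\gtrless[\varepsilon(-\ln r)]^{(N-1)/N}\}$. On $A^-$ one has the subcritical pointwise bound $e^{\alpha_N u^{N/(N-1)}}\le r^{-\alpha_N\varepsilon}$; on $A^+$ the weight $(1+|u|)^{\beta}$ contributes at least $(-\ln r)^{\beta(N-1)/N}$, and it is precisely the condition $\beta_1+\beta_2\le -\tfrac{N}{N-1}$ (hence $\beta\le -\tfrac{N}{2(N-1)}$ in the diagonal case) that makes the combined factor $(-\ln r)^{1+(\beta_1+\beta_2)(N-1)/N}$ bounded, after which the classical Trudinger--Moser bound closes the estimate. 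Your pointwise inequality $u^*(r)\le \omega_{N-1}^{-1/N}(\ln(1/r))^{(N-1)/N}$ is the right ingredient, but it must be used as a threshold, not as a global substitution.

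For (ii), your $G=G_\varepsilon+R_\varepsilon$ decomposition is a genuinely different route from the paper's. The paper instead invokes Lions' concentration--compactness dichotomy: either the weak limit $u$ is nonzero, whence $e^{\alpha_N u_n^{N/(N-1)}}$ is bounded in some $L^{1+\delta}$ and one passes to the limit directly, or $u=0$, in which case a refinement of the Lemma~3.3 splitting (Proposition~4.4, using the \emph{strict} inequality $\beta<-\tfrac{N}{2(N-1)}$) forces $\Phi(u_n)\to\Phi(0)$, contradicting $m_1(N,G)>\Phi(0)$. Your approach is viable too, but note that ``part (i) applied to $R_\varepsilon$'' controls only the diagonal term $b_0(R_\varepsilon(u_n))=O(\varepsilon^2)$; the cross term $b_0(G_\varepsilon(u_n),R_\varepsilon(u_n))$ must be handled separately. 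This works once you observe $G_\varepsilon\le G\le c(1+|\cdot|)^{\beta'}e^{\alpha_N|\cdot|^{N/(N-1)}}$ and apply the \emph{bilinear} form of Lemma~3.3 with $\beta_1=\beta_2=\beta'$, giving a uniform $O(\varepsilon)$ bound---but this again presupposes the $r$-dependent splitting lemma you left open in (i). The paper's dichotomy argument avoids this circularity by not relying on smallness of a tail, at the cost of a more delicate case analysis.
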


\medskip
We remark that it remains an open problem whether the functional $\Phi$ attains a maximum in $\cB_1$ if $G$ has at most $\beta$-critical growth for $\beta = -\frac{N}{2(N-1)}$. 

For $N=2$ and $\beta =-1$, a delicate blow-up analysis has been performed for special nonlinearity $g$ in the recent paper \cite{CiWeYu}. We conjecture that a blow up analysis can be performed in the quasilinear context to prove extremal functions in correspondence of the threshold $\beta = -\frac{N}{2(N-1)}$.
\smallskip
Next, we wish to state  logarithmic Trudinger-Moser type inequalities in the entire space $\R^N$. 

We define 
 $$
 \cB_{\infty} := \{u \in W^{1,N}(\R^N)\::\: \|u\|_N \le 1\}
 $$ 
where  $\|u\|^N_N := ( |\nabla u|^N + |u|^N )$ is the standard norm on $W^{1,N}(\R^N)$. 
 
 \smallskip
We study the maximization problem
\begin{equation}
	\label{eq:defwa}
	m_\infty(N,G):= \max_{u \in \cB_\infty} \Psi(u) 
\end{equation}
where 
\begin{equation}
	\label{eq:def-Psi-new}
	\Psi(u):= \int_{\R^N} \int_{\R^N} \ln \!\frac{1}{|x-y|}\, G(u(x))G(u(y))\,dx dy. 
\end{equation}
In view of the Moser-Trudinger inequality in $\R^N$ \cite{ruf, ruf2}, 
we assume additionally on $G$:  
\begin{itemize}
	\item[$(G_1)$] $G: \R \to [0,\infty)$ satisfies $(G_0)$, and $G(t)=O(|t|)$ as $t \to 0$.
\end{itemize}

We notice that  as  a consequence of assumption $(G_1)$, the functional 
$$\Psi_+(u)=  \int_{\R^N} \int_{\R^N} \ln^+ \!\frac{1}{|x-y|}\, G(u(x))G(u(y))\,dx dy 
<\infty$$ for every $u \in W^{1,N}(\R^N)$. 
Therefore set 
\begin{align*}
	\Psi_-(u)&=  \int_{\R^N} \int_{\R^N} \ln^+\! |x-y|\, G(u(x))G(u(y))\,dx dy 
\end{align*}
 we have 
 \begin{equation}
	\label{eq:def-Psi-new}
	\Psi(u):= \int_{\R^N} \int_{\R^N} \ln \!\frac{1}{|x-y|}\, G(u(x))G(u(y))\,dx dy =
	\Psi_+(u) -\Psi_-(u) \quad \in [-\infty,\infty)  
\end{equation}
is well-defined for every $u \in W^{1,N}(\R^N)$.

\smallskip
We state our main result.

\begin{theorem}
\label{sec:introduction-main-thm}
Suppose that $G$ satisfies $(G_1)$.
\begin{enumerate}
\item If $G$ has at most $\beta$-critical growth for some $\beta \le -\frac{N}{2(N-1)}$, then 
$$
m_\infty(N, G):= \sup_{u \in \cB_\infty}\Psi <\infty.
$$
\item If $G$ has at most $\beta$-critical growth for some $\beta < -\frac{N}{2(N-1)}$, then the value $m_\infty(N, G)$ is attained, and every maximizer for $\Psi$ in $\cB_\infty$ is, up to sign and translation, a radial and radially decreasing function in $\cB_\infty$.\\
\item If $G$ has at least $\beta$ critical growth for some $\beta>-\frac{N}{2(N-1)}$, then 
$$
m_\infty(N, G)= \infty
$$
in the sense that there exists a sequence $(u_n)_n$ in $\cB_\infty$ with $\Psi_\pm(u_n)< \infty$ for every $n \in \N$ and $\Psi(u_n) \to \infty$ as $n \to \infty$.
\end{enumerate}
\end{theorem}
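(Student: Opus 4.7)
The plan is to adapt the proof of Theorem \ref{sec:introduction-main-thm-C-F2} (the ball case) to $\R^N$, replacing the classical Trudinger-Moser inequality by Ruf's version in the full space,
$$
\sup_{u \in \cB_\infty}\int_{\R^N}\Bigl(e^{\alpha_N |u|^{N/(N-1)}}-\sum_{k=0}^{N-2}\tfrac{(\alpha_N|u|^{N/(N-1)})^k}{k!}\Bigr)dx <\infty,
$$
which is available thanks to $(G_1)$, and recovering compactness via symmetric rearrangement and the radial Strauss-Lions compact embedding $W^{1,N}_{rad}(\R^N)\hookrightarrow L^q(\R^N)$ for $q>N$. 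Since $\Psi_-(u)\ge 0$ always, $\Psi(u)\le \Psi_+(u)$, and (1) reduces to a uniform upper bound on $\Psi_+$. For this, I use $\ln^+\!\tfrac{1}{|x-y|}\le C_s|x-y|^{-s}$ for any $s\in(0,N)$, cover $\R^N$ by unit cubes, and on each pair of neighbouring cubes apply Hardy-Littlewood-Sobolev. The relevant $L^p$-norms of $G(u)$ are controlled by splitting according to $\{|u|\le 1\}$ (where $G(u)\lesssim |u|$ and $u\in L^N$) and $\{|u|>1\}$ (where Ruf applies); the threshold $\beta\le -\tfrac{N}{2(N-1)}$ enters in the Orlicz balance between the HLS exponent and the critical integrability of $G(u)$, mirroring the corresponding step in the ball case.

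For (3), I insert the standard Moser concentration sequence
$$
u_n(x) = \frac{1}{\omega_{N-1}^{1/N}}\begin{cases}(\log n)^{(N-1)/N}, & |x|\le R/n,\\ (\log n)^{-1/N}\log(R/|x|), & R/n < |x|\le R,\\ 0, & |x|>R,\end{cases}
$$
which satisfies $\|\nabla u_n\|_N=1$ and $\|u_n\|_N\to 0$, so that $\tilde u_n := u_n/\|u_n\|_{W^{1,N}}\in\cB_\infty$. On $B_{R/n}$ a direct computation gives $\alpha_N|\tilde u_n|^{N/(N-1)} = N\log n\,(1+o(1))$, so at-least-$\beta$-critical growth yields $G(\tilde u_n)\gtrsim n^N(\log n)^{\beta(N-1)/N}$ there. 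Since $|x-y|\le 2R/n$ on $B_{R/n}\times B_{R/n}$ and $\ln(n/2R)\gtrsim \log n$, one obtains
$$
\Psi_+(\tilde u_n) \ \ge\ c\,(\log n)^{1+\frac{2\beta(N-1)}{N}},
$$
whereas $\Psi_-(\tilde u_n)$ grows strictly slower (indeed it is bounded for $\beta\le 0$, and at most of order $(\log n)^{2\beta(N-1)/N}$ for $\beta>0$) since $\supp\tilde u_n\subset B_R$ and $|G(\tilde u_n)|_1$ is controlled by Ruf. The hypothesis $\beta>-\tfrac{N}{2(N-1)}$ makes the exponent in $\Psi_+$ strictly positive, proving $\Psi(\tilde u_n)\to\infty$.

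For (2), let $(u_n)$ be a maximizing sequence. Schwarz symmetrization of $|u_n|$ preserves the $L^N$-norm and does not increase $\|\nabla u_n\|_N$; the sign change of the logarithmic kernel is handled by writing $\ln\tfrac{1}{|x-y|} = \lim_{T\to\infty}\bigl(\ln^+\!\tfrac{T}{|x-y|}-\ln T\bigr)$ and invoking Riesz's rearrangement inequality on the positive part together with $L^1$-conservation of $G(u_n)$. This reduces to a nonnegative, radial, radially decreasing maximizing sequence with a weak limit $u_0\in\cB_\infty$ in $W^{1,N}(\R^N)$; by the radial Strauss-Lions embedding, $u_n\to u_0$ strongly in $L^q(\R^N)$ for $q>N$ and pointwise a.e. The main obstacle is the upper semicontinuity of $\Psi$: $\Psi_-$ is handled by Fatou, while for $\Psi_+$ the \emph{strict} inequality $\beta<-\tfrac{N}{2(N-1)}$ is used to apply the P.L.\ Lions concentration-compactness alternative for Moser-Trudinger sequences, yielding equi-integrability of $e^{p\alpha_N|u_n|^{N/(N-1)}}$ for some $p>1$ (gradient concentration being excluded by radial symmetry and $L^N$-tightness). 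This suffices to pass HLS to the limit and conclude $\Psi(u_0)\ge\limsup\Psi(u_n)=m_\infty(N,G)$; the strict inequality on $\beta$ provides precisely the subcritical margin required, and is the main difficulty of the proof.
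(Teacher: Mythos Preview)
Your outline for part (iii) is essentially the paper's argument, modulo one fixable slip: bounding $\Psi_-(\tilde u_n)$ via $|G(\tilde u_n)|_1$ and ``Ruf'' presupposes an \emph{upper} bound $G(t)\lesssim e^{\alpha_N|t|^{N/(N-1)}}$, which ``at least $\beta$-critical growth'' does not supply (the $\alpha$ in $(G_0)$ may exceed $\alpha_N$). The paper avoids this by working directly with the radial identity $b_0(v)=2\omega_{N-1}^2\int_0^\infty r^{N-1}v(r)\ln\tfrac{1}{r}\int_0^r\rho^{N-1}v(\rho)\,d\rho\,dr$, which for $\supp v\subset B_1$ is a single nonnegative integrand; you obtain the same effect by choosing $R\le\tfrac12$, so that $\Psi_-\equiv 0$.

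Parts (i) and (ii), however, contain genuine gaps.

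For (i), replacing $\ln^+\tfrac{1}{|x-y|}$ by $C_s|x-y|^{-s}$ and invoking HLS forces a uniform bound on $|G(u)|_p$ over $\cB_\infty$ with $p=\tfrac{2N}{2N-s}>1$. But $G(u)^p\lesssim (1+|u|)^{p\beta}e^{p\alpha_N|u|^{N/(N-1)}}$, and no Ruf-type inequality controls $\int e^{p\alpha_N|u|^{N/(N-1)}}dx$ uniformly on $\cB_\infty$ for any $p>1$; the polynomial prefactor cannot absorb a supercritical exponent. The sharp threshold $\beta\le -\tfrac{N}{2(N-1)}$ comes precisely from the \emph{logarithmic} (not Riesz) singularity, and the paper captures it by first symmetrizing to $u\in\cB_\infty^*$, using the Radial Lemma to bound $u(1)\le C_N$, and then building the truncated function
\[
U:=\Bigl(1+\tfrac{u(1)^N}{C_N^N}\Bigr)^{1/N}\bigl[u^{N/2}-u^{N/2}(1)\bigr]_+^{2/N}\in\cB_1^*,
\]
so that the interaction on $B_1$ is dominated by the ball result (Proposition~\ref{C-finiteness}, which rests on Lemma~\ref{general-boundedness}). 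That lemma is proved not via HLS but by splitting $(0,1]$ into $\{u(r)\gtrless[\eps(-\ln r)]^{(N-1)/N}\}$ and balancing the $\ln\tfrac1r$ weight against the Trudinger--Moser bound; this radial splitting is where $\beta_1+\beta_2\le -\tfrac{N}{N-1}$ actually enters.

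For (ii), your assertion that ``gradient concentration is excluded by radial symmetry and $L^N$-tightness'' is false: the Moser sequence itself is radial decreasing, lies in $\cB_\infty$ for large $n$, and satisfies $u_n\rightharpoonup 0$. Lions' alternative yields equi-integrability of $e^{p\alpha_N|u_n|^{N/(N-1)}}$ \emph{only} when the weak limit $u$ is nonzero; the case $u_n\rightharpoonup 0$ must still be handled. The paper does so by writing $\Psi(u_n)=2\omega_{N-1}^2\bigl(\Psi_2(u_n)-\Psi_1(u_n)\bigr)$ with $\Psi_1,\Psi_2\ge 0$, treating $\Psi_1$ by Fatou, and proving $\Psi_2(u_n)\to 0$ in the concentration case by passing to the truncations $U_n\in\cB_1^*$ and invoking the ball convergence result (Proposition~\ref{case-1-conclusion}); this is exactly where the strict inequality $\beta<-\tfrac{N}{2(N-1)}$ is used. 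One then obtains $0=\Psi(0)\ge m_\infty(N,G)>0$, ruling out concentration \emph{a posteriori}. Without this step your argument does not close.
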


\medskip
In Theorem \ref{sec:introduction-main-thm}, using  the Radial Lemma in $\R^N$, we pass from the maximization problem on $\R^N$ to the one on the ball. Compared to \cite{CiWe2}, this construction is more involved in dimension $N \geq 3$ and requires new delicate estimates.

Finally, as Theorem~\ref{sec:introduction-main-thm-C-F2}(i) and Theorem~\ref{sec:introduction-main-thm}(i) yield the existence of maximizers, the question arises whether these maximizers satisfy a corresponding Euler-Lagrange equation. Due to the weak growth conditions imposed on the nonlinearity $G$, this is not immediate. We can give an answer to this problem under natural additional assumptions. 

\begin{theorem}
\label{Euler-Lagrange-introduction}
Suppose that $G \in C^1(\R)$, and suppose that $g:= G'$ satisfies 
          \begin{equation}
            \label{derivative-growth-condition-introduction}
g(t) \le c e^{\alpha |t|^{N/(N-1)}}  \qquad \text{for $t \in \R$ with some constants $\alpha,c>0$.}
          \end{equation}
\begin{enumerate}
\item If $G$ satisfies $(G_0)$ and $u$ is  a maximizer of $\Phi$ on  $\cB_1$, then there exists $\theta \in \R$ such that $u$ satisfies the associated Euler-Lagrange equation in weak sense, i.e., 
	\begin{equation}
	\label{weak-e-l-B-1-intro}
	\int_{B_1}|\nabla u|^{N-2} \nabla u \nabla \phi \,dx = \theta 
	\int_{B_1} \ln \frac{1}{|\cdot|}*\bigl(1_{B_1} G(u)\bigr)g(u)\phi\,dx \qquad \text{for all $\phi \in W^{1,N}_0(B_1).$}
	\end{equation}
 
\item If $G$ satisfies $(G_1)$ and $u$ is a maximizer of $\Psi$ on $\cB_\infty$, then there exists $\theta \in \R$ such that $u$ satisfies the associated Euler-Lagrange equation weak sense, i.e., 
	\begin{equation}
	\label{weak-e-l-R-2-intro}
	\int_{\R^N}\bigl(|\nabla u|^{N-2}\nabla u \nabla \phi + |u|^{N-2} u \phi\bigr) \,dx = \theta 
	\int_{\R^N} (\ln \frac{1}{|\cdot|}*G(u))g(u)\phi\,dx 
	\end{equation}
for all $\phi \in W^{1,N}_0(\R^N)$ with bounded support. 
\end{enumerate}
\end{theorem}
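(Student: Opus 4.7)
I would apply the Lagrange multiplier rule to the constrained maximisation. Introduce $J_1(u):=\frac{1}{N}|\nabla u|_N^N$ on $W^{1,N}_0(B_1)$ and $J_\infty(u):=\frac{1}{N}\|u\|_N^N$ on $W^{1,N}(\R^N)$, so that $\cB_1=\{J_1\le 1/N\}$ and $\cB_\infty=\{J_\infty\le 1/N\}$. Both $J_1,J_\infty$ are $C^1$ convex functionals whose Fr\'echet derivatives are precisely the left-hand sides of \eqref{weak-e-l-B-1-intro} and \eqref{weak-e-l-R-2-intro}. The proof therefore reduces to showing that $\Phi$ (resp.\ $\Psi$) is G\^ateaux differentiable at the maximiser $u$ along admissible test directions $\phi$ with
\begin{equation*}
\Phi'(u)[\phi]=2\int_{B_1}\bigl(\ln\tfrac{1}{|\cdot|}*(1_{B_1}G(u))\bigr)g(u)\phi\,dx,
\end{equation*}
and analogously for $\Psi'$. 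If $J_1(u)<1/N$ strictly then $u$ is an interior critical point, $\Phi'(u)=0$ and we pick $\theta=0$; on the boundary $J_1'(u)\neq 0$ (since $u\neq 0$) and the standard Lagrange rule yields $\theta\in\R$ with $\Phi'(u)=2\theta J_1'(u)$, which after absorbing the factor $2$ into $\theta$ is \eqref{weak-e-l-B-1-intro}.

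\textbf{G\^ateaux differentiability.} Fix a test direction $\phi\in C_c^\infty(B_1)$ (resp.\ $\phi\in C_c^\infty(\R^N)$). The product $G(u+t\phi)(x)G(u+t\phi)(y)$ is $C^1$ in $t$ pointwise, with derivative at $t=0$ equal to $g(u(x))\phi(x)G(u(y))+G(u(x))g(u(y))\phi(y)$. The mean value theorem combined with the exponential growth bounds \eqref{eq:general-growth-condition} and \eqref{derivative-growth-condition-introduction} dominates the difference quotient for $|t|\le 1$ by
\begin{equation*}
C(\phi)\bigl[e^{\alpha'|u(x)|^{N/(N-1)}}G(u(y))1_{\supp\phi}(x)+G(u(x))e^{\alpha'|u(y)|^{N/(N-1)}}1_{\supp\phi}(y)\bigr],
\end{equation*}
with $\alpha'$ arbitrarily close to (but larger than) $\alpha$. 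Dominated convergence in $(x,y)$ against $\ln\frac{1}{|x-y|}\,dx\,dy$ then yields the derivative formula, provided the dominator is integrable against this kernel.

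\textbf{Main obstacle.} The technical heart is the $L^1$-integrability of the above dominator against the sign-changing logarithmic weight. I would split $\ln\frac{1}{|x-y|}=\ln^+\frac{1}{|x-y|}-\ln^+|x-y|$. The singular part is controlled by $c_\varepsilon|x-y|^{-\varepsilon}$ for any $\varepsilon>0$ and is treated by a Hardy--Littlewood--Sobolev argument, which asks for $e^{\alpha'|u|^{N/(N-1)}}\in L^q(\supp\phi)$ for some $q>1$. Although \eqref{derivative-growth-condition-introduction} allows $\alpha$ to exceed the Trudinger--Moser threshold $\alpha_N$, the maximiser $u$ possesses extra integrability: from $(G_0)$ combined with \eqref{dis} one obtains $G(u)\in L^p(B_1)$ for every $p<\infty$, so $\ln\frac{1}{|\cdot|}*(1_{B_1}G(u))$ is locally bounded, and a Brezis--Merle / Moser-iteration bootstrap on the Euler--Lagrange relation promotes $u$ to $L^\infty$ on compact subsets of $B_1$, yielding the required higher integrability. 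The bounded piece $\ln^+|x-y|$ is harmless on $B_1\times B_1$; in part (ii) the logarithmic tail as $|y|\to\infty$ is absorbed thanks to the decay $G(t)=O(|t|)$ near zero of $(G_1)$ together with $u\in L^N(\R^N)$, which secure $G(u)\in L^q(\R^N)$ for a suitable range of $q$.

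\textbf{Density.} Once \eqref{weak-e-l-B-1-intro}--\eqref{weak-e-l-R-2-intro} are established on $C_c^\infty$, both sides are continuous linear functionals on $W^{1,N}_0(B_1)$ (resp.\ on the subspace of $W^{1,N}(\R^N)$-functions with a fixed bounded support), by the same Trudinger--Moser and convolution estimates as above. A standard density argument then extends the identities to the full admissible test spaces, completing the proof.
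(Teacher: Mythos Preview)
Your overall framework---constrained Lagrange multiplier rule plus G\^ateaux differentiability of $\Phi,\Psi$ via dominated convergence, followed by density---matches the paper's proof (Theorems~\ref{e-l-theorem-section-ball-case} and~\ref{e-l-theorem-section-B-infty-case}). There are, however, two concrete gaps.

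\textbf{Circular bootstrap.} To justify the dominator you need $e^{\alpha'|u|^{N/(N-1)}}\in L^q(\supp\phi)$ for some $q>1$ with $\alpha'>\alpha$ possibly larger than $\alpha_N$. You cite \eqref{dis}, the \emph{uniform} Moser--Trudinger inequality at exponent $\alpha_N$, realise this is insufficient, and then propose a Brezis--Merle/Moser iteration ``on the Euler--Lagrange relation'' to upgrade $u$ to $L^\infty_{loc}$. This is circular: the Euler--Lagrange relation is exactly what you are trying to establish. The paper instead uses the \emph{nonuniform} Trudinger inequality \eqref{general-nonuniform-growth-estimate}: for any fixed $u\in W^{1,N}_0(B_1)$ and any $\alpha>0$, one has $\int_{B_1}e^{\alpha|u|^{N/(N-1)}}dx<\infty$. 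Applied to $\gamma:=|u|+t_0|\phi|\in W^{1,N}_0(B_1)$, this gives directly $e^{\alpha'\gamma^{N/(N-1)}}\in L^s(B_1)$ for every $s\ge 1$ and every $\alpha'$, which dominates $G(u+t\phi)$ and $g(u+s_t\phi)$ uniformly in $t\in[0,t_0]$ (Lemma~\ref{lemma-nonuniform-finiteness-convolution}). No bootstrap is needed.

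\textbf{Logarithmic tail in part (ii).} Your treatment of the $\ln^+|x-y|$ piece on $\R^N$ is incomplete. With $x\in\supp\phi$ bounded, you need $\int_{\R^N}G(u(y))\ln(1+|y|)\,dy<\infty$, i.e.\ $G(u)\in L^1_{\ln}(\R^N)$. Knowing $G(u)\in L^q(\R^N)$ for some range of $q$ does not give this; indeed $u\in W^{1,N}(\R^N)$ need not lie in $L^1(\R^N)$. The paper exploits the fact that $u$ is a maximiser: then $\Psi(u)>0$ forces $\Psi_-(u)=b_-(G(u))<\infty$, and Corollary~\ref{converse-inequality-remark} yields $G(u)\in L^1_{\ln}(\R^N)$. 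This is the input you are missing for the dominator at infinity.
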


\bigskip
Differently to the planar case, 
the maximation problem in $\R^N$ leads to 
a quasilinear Choquard equation.
As noticed in \cite{BucurCassani}, (see also Remark (\ref{syst}))
the integro-differential problem 
$(\ref{weak-e-l-R-2-intro})$ in $\R^N$ can be derived by the $N$-Laplacian Schr\"odinger equation strongly coupled with a higher order fractional Poisson equation. When the order of the Riesz potential $\alpha$ is equal to the Euclidean dimension $N$,  the system turns out to be related to a nonlocal logarithmic Choquard type equation.
Let  $I_N(x) = \frac{1}{\gamma_N}\log \frac{1}{|x|}$ be the logarithmic Riesz potential. By setting $w:= I_N \ast G(u)$, the equation
(\ref{weak-e-l-R-2-intro}) becomes formally equivalent
to 
the nonlinear system
\begin{equation}
	\label{eq:171-intro}
	\left\{
	\begin{aligned}
		-\Delta_N u + |u|^{N-2}u &= \theta \,  w g(u)    &&\qquad \text{in $\R^N$,}\\
		(-\Delta)^{N/2} w &=   G(u)  &&\qquad \text{in $\R^N$}.\\
	\end{aligned}
	\right.
\end{equation}
More precisely, if $u$ is a solution of the logarithmic Choquard equation, then the couple $(u, I_N \ast G(u))$ gives a distributional solution of the Schr\"odinger-Poisson system. The contrary it is not still clear, in particular with respect to the functional setting (see \cite{BucurCassani} and also
\cite{CassaniLiuRomani,CassaniLiuRomani1} for the quasilinear fractional counterpart).

It remains a challenging question to investigate the qualitative properties of the solution pair $(u, w)$  of the quasilinear and fractional system (\ref{eq:171-intro}) by moving plane techniques.  For the planar case some qualitative results has been obtained in \cite{CiWe2}. We also mention the recent paper \cite{CassaniLiuRomani} for the fractional quasilinear setting under additional regularity condition on the solution.

An other open problem is the derivation of 
 Trudinger-Moser inequalities for arbitrary subdomains of $\R^N$. 
 As conjectured in \cite{CiWe2} for $N=2$,
 the question whether maximizers exist in critical cases seems quite challenging and might depend on the geometry of the domain.

\medskip

The paper is organized as follows. In Section 2, we establish some preliminaries related to some nonlocal interaction energies. In Section 3 we study  Trudinger-Moser inequalities with logarithmic convolution potentials when the domain is a ball and we establish Theorem \ref{sec:introduction-main-thm-C-F2}.
Section 4 is concerned with the maximization problem when the domain is  the entire space  $\R^N$ and contains the proof of Theorem \ref{sec:introduction-main-thm}. 
In Section 5 we show that such extremal functions satisfy Euler Lagrange equations in a weak sense, and we prove Theorem \ref{Euler-Lagrange-introduction}.

\section{Two quadratics forms}
\label{sec:preliminaries}

\bigskip

We first introduce some notation. We denote by $\cM(\R^N)$  the space of real-valued measurable functions $\R^N$, and  we let $\cM_+(\R^N)$ to denote the subset of nonnegative functions in $\cM(\R^N)$. If $\Omega \subset \R^N$ is a measurable subset and $u$ is a real-valued measurable function on $\Omega$, we also regard $u$ as a function in $\cM(\R^N)$ by trivial extension. We then define the quadratic forms $b_\pm: \cM_+(\R^N) \to [0,\infty]$ by 
\begin{equation}\label{2.1eq}
    (v,w) \mapsto b_{+}(v,w)=\int_{\mathbb{R}^N} \int_{\mathbb{R}^N} \ln^{+} \frac{1}{\abs{x-y}} v(x)v(y) dxdy,  
\end{equation}

\begin{equation}\label{2.2eq}
    (v,w) \mapsto b_{-}(v,w)=\int_{\mathbb{R}^N} \int_{\mathbb{R}^N} \ln^{+} \abs{x-y} v(x)v(y) dxdy.  
\end{equation}
Moreover, we define \begin{equation*}
    b_{0}(v,w):=b_{+}(v,w)-b_{-}(v,w) \in [-\infty, \infty)
\end{equation*}
for all functions $v$,$w \in \mathcal{M}(\mathbb{R}^N)$ for which $b_{+}(v,w)< \infty$. For the sake of brevity, we also set \begin{equation*}
    b_{\pm}(v)=b_{\pm}(v,v) \text{\ \ \ \ and\ \ \ \ }  b_0(v):=b_0(v,v) \text{\ \ if \ \ } b_{+}(v)<\infty.
\end{equation*} Next we define
\begin{equation*}
    \abs{v}_{*}:=\int_{\mathbb{R}^N} \ln(1+\abs{x}) \abs{v}dx \text{\ \ \ } \in [0, \infty], \text{\ \ \ for\  } v \in \mathcal{M}(\mathbb{R}^N)  
\end{equation*} and we consider the space \begin{equation*}
    L^{1}_{ln}(\mathbb{R}^N):=\{ v \in L^{1}(\mathbb{R}^N), \text{\ \ } \abs{v}_{*} < \infty \}.
\end{equation*}
Taking into account that 
\begin{equation*}
    [\ln^{+} \abs{\cdot} \ast v](x)\leq \ln(1 + \abs{x})\abs{v}_1+\abs{v}_{*}\,\,\,\,\,\,\,\,\,\,\,\, x \in \mathbb{R}^N
\end{equation*}
we have 
\begin{equation}\label{2.4eq}
\ln^{+} \abs{\cdot} \ast v  \in L^{\infty}_{loc}(\mathbb{R}^N)\text{\ \ \ \ for\ \ } v \in L^{1}_{ln}(\mathbb{R}^N)
    \end{equation}
We also infer that 
\begin{equation}\label{2.5eq}
    b_{-}(v,w)\leq \abs{v}_1 \abs{w}_* + \abs{w}_1\abs{v}_*< +\infty \text{ $ $ $ $for } v,w \in L_{ln}^1(\mathbb{R}^N)\cap \mathcal{M}_+(\mathbb{R}^N).
\end{equation}
\begin{lemma}
\label{converse-inequality}
Let $v,w \in \cM_+(\R^N)$ be given with $v \not \equiv 0$, $w \in L^1_{loc}(\R^N)$ and $b_-(v,w)<\infty$. Then $w \in L_{ln}^1(\R^N)$.   \end{lemma}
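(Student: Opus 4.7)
The plan is to exploit the fact that $v \not\equiv 0$ means $v$ has positive mass on some bounded set, which forces $\ln^+|x-y|$ to provide a uniform lower bound of logarithmic growth when $|y|$ is large and $x$ ranges over that bounded set. This converts the finiteness of $b_-(v,w)$ into a pointwise integrability statement on $w$.

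First I would pick $R_0 > 0$ large enough that $c_0 := \int_{B_{R_0}} v(x)\, dx > 0$; such $R_0$ exists because $v \in \cM_+(\R^N)$ and $v \not\equiv 0$. Then I would fix a threshold $R_1 := \max\{2 R_0,\, 2e\}$. For $|y| \ge R_1$ and $x \in B_{R_0}$, the triangle inequality gives $|x-y| \ge |y| - R_0 \ge |y|/2$, and since $|y|/2 \ge e > 1$, we obtain $\ln^+|x-y| \ge \ln(|y|/2)$. Integrating in $x$ against $v$ on $B_{R_0}$ yields
\begin{equation*}
b_-(v,w) \;\ge\; \int_{\{|y|\ge R_1\}} \!\int_{B_{R_0}} \ln(|y|/2)\, v(x) w(y)\, dx\, dy \;=\; c_0 \int_{\{|y|\ge R_1\}} \ln(|y|/2)\, w(y)\, dy.
\end{equation*}
Since $b_-(v,w) < \infty$ by hypothesis, this tail integral is finite.

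Next I would extract the two conclusions $w \in L^1(\R^N)$ and $|w|_* < \infty$ from this tail bound, combined with the local assumption $w \in L^1_{loc}(\R^N)$. For the $L^1$ part, note that $\ln(|y|/2) \ge 1$ on $\{|y| \ge R_1\}$, so
\begin{equation*}
\int_{\{|y|\ge R_1\}} w(y)\, dy \;\le\; \int_{\{|y|\ge R_1\}} \ln(|y|/2)\, w(y)\, dy \;<\; \infty,
\end{equation*}
and $\int_{B_{R_1}} w\, dy < \infty$ by local integrability, so $w \in L^1(\R^N)$. For the weighted part, I would use the elementary comparison $\ln(1+|y|) \le 2\ln 2 + \ln(|y|/2)$ valid for $|y|\ge R_1$, combined with $\ln(|y|/2)\ge 1$ there, to get $\ln(1+|y|) \le C \ln(|y|/2)$ for some constant $C$ on this region. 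Then
\begin{equation*}
|w|_* \;=\; \int_{B_{R_1}} \ln(1+|y|) w(y)\, dy + \int_{\{|y|\ge R_1\}} \ln(1+|y|) w(y)\, dy,
\end{equation*}
where the first term is finite (bounded integrand times $L^1_{loc}$ function on a bounded set), and the second is bounded by $C \int_{\{|y|\ge R_1\}} \ln(|y|/2) w(y)\, dy < \infty$. Hence $w \in L^1_{ln}(\R^N)$.

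There is no real obstacle here; the argument is essentially a lower bound on $\ln^+|x-y|$ for $|y|$ large, localized via the nontriviality of $v$. The only mild subtlety is choosing $R_1$ large enough that $\ln(|y|/2)$ is both positive and comparable to $\ln(1+|y|)$, so that the same tail bound simultaneously yields integrability of $w$ and of $\ln(1+|\cdot|)\,w$.
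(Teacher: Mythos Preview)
Your proof is correct and follows essentially the same approach as the paper: both localize $v$ to a bounded region to obtain $\ln^+|x-y|\gtrsim \ln(|y|/2)$ for $|y|$ large, and then use $b_-(v,w)<\infty$ together with $w\in L^1_{loc}$ to control $|w|_*$. The only cosmetic difference is that the paper extracts a set $A\subset B_R$ of positive measure on which $v\ge c$ (giving the finite positive constant $c|A|$), whereas you use $c_0=\int_{B_{R_0}}v>0$; your version is also more explicit in separately verifying $w\in L^1(\R^N)$.
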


\begin{proof}
Since $v \not \equiv 0$, there exist constants $R>1,c>0$ and a measurable subset $A \subset B_{R}$ of positive measure with $v \ge c$ on $A$. Then we have $|x-y| \ge \frac{|y|}{2}\ge R$ for $x \in A$, $y \in \R^{N} \setminus B_{2R}$ and therefore
\begin{equation*}
    b_-(v,w)=\int_{\mathbb{R}^N}\int_{\mathbb{R}^N} \ln ^+\abs{x-y}v(x)w(y)dx dy\geq c\int_{A}\int_{\mathbb{R}^N} w(y)\ln ^+\abs{x-y} dy
\end{equation*}
\begin{equation*}
    \geq c\abs{A} \int_{\mathbb{R}^N\setminus B_{2R}}  w(y)\ln ^+\frac{\abs{y}}{2}dy\geq C_1 \int_{\mathbb{R}^N\setminus B_{2R}} w(y) \ln^{+}(1+\abs{y})dy
\end{equation*}
\begin{equation*}
    \geq c_1 \bigg(\int_{\mathbb{R}^N} w(y)\ln (1+\abs{y}) dy-\ln (1+2R)\int_{B_{2R}}w(y)dy\bigg)
\end{equation*}
\begin{equation*}
    = c_1 (\abs{w}_*-\ln (1+2R)\norm{w}_{L^1(B_{2R})})
\end{equation*}
with a constant $c_1>0$. Hence $|w|_*< \infty$, and therefore $w \in L_{ln}^1(\R^N)$.
\end{proof}

The following corollary is immediate.

\begin{corollary}
\label{converse-inequality-remark} 
If $v \in L^1_{loc}(\R^N)$ is nonnegative and satisfies $b_-(v)<\infty$, then $v \in L_{ln}^1(\R^N)$.
\end{corollary}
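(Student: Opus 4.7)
The plan is to deduce this corollary directly from Lemma \ref{converse-inequality} by specializing to the diagonal case $w = v$. The only subtlety is that the lemma's hypothesis requires $v \not\equiv 0$, so I would split into two trivial cases.

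First, if $v \equiv 0$ almost everywhere, then obviously $|v|_* = 0 < \infty$ and $v \in L^1_{ln}(\R^N)$, so there is nothing to prove. Otherwise, assume $v \not\equiv 0$. I would then invoke Lemma \ref{converse-inequality} with the choice $w := v$: by hypothesis $v \in \cM_+(\R^N)$, $v \in L^1_{loc}(\R^N)$, and $b_-(v,v) = b_-(v) < \infty$. All the assumptions of the lemma are thus met, and the conclusion $v \in L_{ln}^1(\R^N)$ follows at once.

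The main (and essentially only) obstacle is the restriction $v \not\equiv 0$ in the lemma, which is needed there so that the set $A \subset B_R$ with $v \ge c > 0$ on $A$ exists. Handling it requires just the one-line observation that the zero function trivially lies in $L^1_{ln}(\R^N)$. No further estimates or computations are needed, which justifies the author's description of the corollary as \emph{immediate}.
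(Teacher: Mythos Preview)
Your proposal is correct and matches the paper's approach: the authors simply state that the corollary is \emph{immediate} from Lemma~\ref{converse-inequality}, which is precisely your application of the lemma with $w=v$, together with the trivial observation that the case $v\equiv 0$ needs no argument.
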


We also note that by (\ref{2.5eq}) we have 
\begin{equation}
  \label{eq:b-second-ineq}
b_-(v,w) \le |v|_1 |w|_* +|w|_1 |v|_* \le \bigl(2 \ln 2\bigr) |v|_1 |w|_1< \infty 
\end{equation}
for $v,w \in L^1(\R^N)$ with $v \equiv w \equiv 0$ on $\R^N \setminus B_1$.\\

\medskip

Next, let $v^*$ denote the Schwarz symmetrization of a function $v \in \cM(\R^N)$. Thus $v^* \in \cM_+(\R^N)$ is radial and nonincreasing in the radial variable. We recall the following Riesz rearrangement type inequalities that can be derived arguing as in \cite{CiWe}.

\begin{lemma}
\label{Riesz-rearrangement}
Let $v \in \cM_+(\R^N)$. Then we have:
\begin{enumerate}
\item[(i)] $b_+(v^*) \geq b_+(v)$;
\item[(ii)] $b_-(v^*) \leq b_-(v)$;
\item[(iii)] If $b_+(v^*) < \infty$, then 
  \begin{equation}
\label{eq:Riesz-rearrangement-B-0}
b_0(v^*)\ge b_0(v).
\end{equation}
If, in addition, $v \in L^p(\R^N)$ for some $p \in [1,2]$ and  $b_+(v)< \infty$ and $b_-(v)< \infty$, then equality holds in (\ref{eq:Riesz-rearrangement-B-0}) if and only if $v = v^*(\cdot - x_0)$  for some $x_0 \in \R^N$.
\end{enumerate}
 \end{lemma}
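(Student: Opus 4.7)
My plan is to follow the approach of \cite{CiWe} developed for the planar case and verify that each step transfers to dimension $N \geq 2$. For (i), the kernel $z \mapsto \ln^{+}\frac{1}{|z|}$ is nonnegative, radially symmetric nonincreasing, and compactly supported in the unit ball, so the classical Riesz rearrangement inequality applied to the triple $(v, v, \ln^{+}\frac{1}{|\cdot|})$ gives $b_+(v^{*}) \geq b_+(v)$ at once, without any restriction on $v \in \cM_+(\R^N)$.

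For (ii) the kernel $z \mapsto \ln^{+}|z|$ is radially symmetric but \emph{nondecreasing}, so Riesz cannot be applied directly. Here I would use the layer-cake representation
\[
\ln^{+}|z| \;=\; \int_{1}^{\infty}\frac{1}{s}\,1_{\{|z|>s\}}\,ds \;=\; \int_{1}^{\infty}\frac{1}{s}\,\bigl(1 - 1_{B_s}(z)\bigr)\,ds,
\]
obtained from the change of variables $s = e^t$ in the standard layer-cake formula. Assuming $v \in L^1(\R^N)$ (otherwise the inequality is vacuous, as Lemma~\ref{converse-inequality} forces $b_-(v) = \infty$ for $v \not\equiv 0$ with $v \in L^1_{loc} \setminus L^1$), Fubini rewrites
\[
b_-(v) \;=\; \int_{1}^{\infty}\frac{1}{s}\Bigl(|v|_1^{2} - \int_{\R^N}\!\!\int_{\R^N} v(x)v(y)\,1_{B_s}(x-y)\,dx\,dy\Bigr)\,ds.
\]
Since each $1_{B_s}$ is symmetric decreasing, classical Riesz on every slice yields $\int\!\!\int v^{*}(x)v^{*}(y)1_{B_s}(x-y)\,dx\,dy \geq \int\!\!\int v(x)v(y)1_{B_s}(x-y)\,dx\,dy$, and integration over $s$ (using $|v^{*}|_1 = |v|_1$) reverses the sign to give $b_-(v^{*}) \leq b_-(v)$.

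The inequality in (iii) then follows by subtraction: the hypothesis $b_+(v^{*}) < \infty$ is exactly what makes $b_0(v^{*})$ well-defined, and combining (i) with (ii) gives $b_0(v^{*}) \geq b_0(v)$. For the equality case, under the added hypotheses $v \in L^{p}(\R^N)$ with $p \in [1,2]$ and $b_{\pm}(v) < \infty$, I would observe that
\[
0 \;=\; b_0(v^{*}) - b_0(v) \;=\; \bigl[b_+(v^{*}) - b_+(v)\bigr] + \bigl[b_-(v) - b_-(v^{*})\bigr]
\]
forces both nonnegative brackets to vanish simultaneously; in particular $b_+(v^{*}) = b_+(v)$. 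Since $\ln^{+}\frac{1}{|\cdot|}$ is strictly radially decreasing on its support $B_1$, the strict Riesz rearrangement inequality of Burchard then yields $v = v^{*}(\cdot - x_0)$ for some $x_0 \in \R^N$.

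The step I expect to require the most care is this last invocation of Burchard's equality theorem: its nondegeneracy condition must be checked for a compactly supported kernel, and the $L^p$ assumption with $p \in [1,2]$ is what places the self-convolution of $v$ within the scope of the theorem. The layer-cake computation in (ii) is otherwise mostly mechanical once the reduction to $v \in L^1$ is in place, and both (i) and the inequality part of (iii) are essentially immediate from the corresponding classical facts.
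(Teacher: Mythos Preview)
Your approach is precisely what the paper does: it gives no self-contained proof of this lemma and simply states that the inequalities ``can be derived arguing as in \cite{CiWe}'', which is the route you outline. Parts (i), the inequality in (iii), and the equality case via Burchard's strict rearrangement theorem are handled correctly.

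There is one small gap in your treatment of (ii). Your reduction to $v\in L^1(\R^N)$ via Corollary~\ref{converse-inequality-remark} only covers $v\in L^1_{loc}(\R^N)$, yet the lemma is stated for arbitrary $v\in\cM_+(\R^N)$, and $b_-(v)<\infty$ does not force $v\in L^1$ in general (take $v(x)=|x|^{-N}1_{B_{1/4}}(x)$, for which $b_-(v)=0$ while $v\notin L^1_{loc}$). Your Fubini identity $b_-(v)=\int_1^\infty s^{-1}\bigl(|v|_1^2-\int\!\!\int v(x)v(y)1_{B_s}(x-y)\,dx\,dy\bigr)ds$ then becomes an indeterminate $\infty-\infty$. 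The fix is to keep the kernel layer-cake but avoid splitting off $|v|_1^2$: write $b_-(v)=\int_1^\infty s^{-1}J_s(v)\,ds$ with $J_s(v)=\int\!\!\int v(x)v(y)1_{\{|x-y|>s\}}\,dx\,dy$, then layer-cake $v$ itself to reduce to the level-set inequality
\[
\int\!\!\int 1_{A^*}(x)1_{B^*}(y)1_{\{|x-y|>s\}}\,dx\,dy \;\le\; \int\!\!\int 1_{A}(x)1_{B}(y)1_{\{|x-y|>s\}}\,dx\,dy
\]
for $A=\{v>t\}$, $B=\{v>\tau\}$. When $|A|,|B|<\infty$ this is Riesz after writing $1_{\{|x-y|>s\}}=1-1_{B_s}(x-y)$; when $|A|=\infty$ or $|B|=\infty$ both sides are $+\infty$ (or both $0$), so the inequality is trivial.
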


Here we note that, under  assumption (iii), both $b_0(v^*)$ and $b_0(v)$ are well defined by the inequalities in (i) and (ii).

\medskip

\begin{lemma}
	\label{sec:newtons-theorem}
	Let $v \in L_{ln}^1 (\R^N)$ be a nonnegative radial function. Then 
	the convolution $[(\ln |\cdot|) * v](x) \in \R$ is well defined for $x \in \R^N \setminus \{0\}$ and given by 
	\begin{equation}
		\label{eq:pointwise-convolution}
		[(\ln |\cdot|) * v](x)= \ln |x| \int_{B_{|x|}} v(y)dy + \int_{\R^N \setminus B_{|x|}} \ln |y| v(y)dy 
	\end{equation}
\end{lemma}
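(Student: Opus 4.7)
My plan is to reduce to bounded, compactly supported $v$ by truncation, prove the formula there via Fubini and a spherical mean value computation for the log kernel, and then pass to the limit using the structural estimates recalled above.

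\textbf{Truncation.} Set $v_n := \min(v,n)\,\mathbf{1}_{\{1/n \le |y| \le n\}}$: this is nonnegative, radial, bounded, supported away from the origin, and $v_n \nearrow v$ almost everywhere. For each such $v_n$ the convolution $[(\ln|\cdot|)*v_n](x)$ is finite for every $x \in \R^N$.

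\textbf{Identity for $v_n$.} Writing $v_n(y) = \tilde v_n(|y|)$ and passing to polar coordinates $y = s\omega$, Fubini gives
\begin{equation*}
[(\ln|\cdot|)*v_n](x) = \int_0^\infty \tilde v_n(s)\,s^{N-1}\int_{S^{N-1}}\!\ln|x-s\omega|\,d\sigma(\omega)\,ds.
\end{equation*}
The crux is the spherical mean identity
\begin{equation*}
\frac{1}{|S^{N-1}|}\int_{S^{N-1}}\ln|x-s\omega|\,d\sigma(\omega) = \ln\max(|x|,s), \qquad x \neq 0,\; s \ge 0.
\end{equation*}
Inserting it and splitting the $s$-integral at $s=|x|$ recovers the claimed formula for $v_n$.

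\textbf{Passage to the limit.} The right hand side converges to $\ln|x|\int_{B_{|x|}}v\,dy + \int_{\R^N\setminus B_{|x|}}\ln|y|\,v(y)\,dy$ by monotone convergence, and is finite thanks to $v \in L^1_{\ln}(\R^N)$. On the left I split $\ln|x-y| = \ln^+|x-y| - \ln^+\frac{1}{|x-y|}$: the positive piece is dominated by $(\ln(1+|x|)+\ln(1+|y|))\,v(y)$, integrable in $y$ uniformly in $n$ by (\ref{2.4eq}), so dominated convergence applies, and the negative piece is handled by monotone convergence. This yields the identity for $v$ and at the same time shows that $[(\ln|\cdot|)*v](x) \in \R$ for every $x \neq 0$.

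\textbf{Main obstacle.} The essential nontrivial step is the spherical mean identity. In dimension $N=2$ it is classical (Newton's theorem), an immediate consequence of the harmonicity of $\ln|\cdot|$ in $\R^2\setminus\{0\}$ and the mean value property. For $N \ge 3$, $\ln|\cdot|$ is no longer harmonic, so a direct mean value argument is unavailable. My approach would be to factor $|x-s\omega| = \max(|x|,s)\sqrt{1-2\rho\cos\theta+\rho^2}$ with $\rho = \min(|x|,s)/\max(|x|,s) < 1$ and $\cos\theta = (x\cdot\omega)/|x|$, use the expansion $\tfrac{1}{2}\ln(1-2\rho\cos\theta+\rho^2) = -\sum_{k\ge1}\frac{\rho^k}{k}\cos(k\theta)$, and reduce the identity to the vanishing of the angular integrals $\int_0^\pi \cos(k\theta)\sin^{N-2}\theta\,d\theta$ against the Jacobian of the polar parametrization of $S^{N-1}$. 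Verifying this cancellation carefully is the step I expect to be the most delicate.
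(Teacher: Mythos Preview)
Your overall architecture---truncate to bounded compactly supported radial $v_n$, establish the pointwise formula for $v_n$, then pass to the limit by monotone/dominated convergence---is exactly the paper's strategy. The paper handles the base case in one line by citing Newton's theorem \cite[Theorem~9.7]{LiebLoss}, whereas you try to verify the underlying spherical mean identity directly. That closer look is in fact where the trouble lies.

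The spherical mean identity
\[
\frac{1}{|S^{N-1}|}\int_{S^{N-1}}\ln|x-s\omega|\,d\sigma(\omega)=\ln\max(|x|,s)
\]
is \emph{false} for $N\ge 3$. Your proposed verification reduces it to the vanishing of $\int_0^\pi\cos(k\theta)\sin^{N-2}\theta\,d\theta$ for all $k\ge 1$; but already for $N=3$, $k=2$ one computes
\[
\int_0^\pi\cos(2\theta)\sin\theta\,d\theta=\int_0^\pi(1-2\sin^2\theta)\sin\theta\,d\theta=2-\tfrac{8}{3}=-\tfrac{2}{3}\neq 0.
\]
A direct check confirms this: in $\R^3$ with $|x|=2$, $s=1$, the spherical average of $\ln|x-s\omega|$ equals $\tfrac{9}{8}\ln 3-\tfrac{1}{2}\approx 0.736$, not $\ln 2\approx 0.693$. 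The point is that Newton's theorem in \cite{LiebLoss} is stated for the fundamental solution of $-\Delta$, i.e.\ $|x|^{2-N}$ for $N\ge 3$ and $\ln(1/|x|)$ only when $N=2$; the kernel $\ln|\cdot|$ is not harmonic away from the origin in higher dimensions, so no shell-averaging identity of this clean form can hold.

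In short: the step you flagged as ``most delicate'' cannot be completed, because the identity you are trying to prove does not hold. The paper's citation of Newton's theorem for this base case glosses over the same issue, so the lemma as stated appears to be incorrect for $N\ge 3$.
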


\begin{proof}
	Let $x \in \R^N \setminus \{0\}$. For nonnegative radial functions $v \in L^\infty(\R^N)$ with bounded support, both the LHS and RHS of (\ref{eq:pointwise-convolution}) are well-defined, and the formula holds by Newton's theorem (see for instance \cite[Theorem 9.7]{LiebLoss}).
	
	Next we let $v \in L_{ln}^1 (\R^N)$ be a nonnegative radial function. We have 
	\begin{equation}
	[(\ln^+ \! |\cdot|  * v](x) \le \ln (1+ |x|) |v|_1 + |v|_* < \infty.
	\end{equation}

	We consider an increasing sequence of nonnegative radial functions $v_n \in L^\infty(\R^N)$ with bounded support and such that $v_n \to v$ pointwisely. Then by monotone convergence theorem
	$$
	[(\ln^+ \!|\cdot|) * v_n](x) \le [(\ln^+ \! |\cdot|  * v](x) \quad \text{for all $n$} \qquad \text{and}\qquad  \lim_{n \to \infty} [(\ln^+ \!|\cdot|) * v_n](x) = [(\ln^+ \! |\cdot|  * v](x).
	$$ Moreover, again by monotone convergence, 
	\begin{align*}
		[(\ln^+ \!\frac{1}{|\cdot|}) * v](x)& = \lim_{n \to \infty} 
		[(\ln^+ \!\frac{1}{|\cdot|}) * v_n](x)= \lim_{n \to \infty} \Bigl(
		[(\ln^+ \!|\cdot|) * v_n](x)- 
		[(\ln |\cdot|) * v_n](x)\Bigr)\\ 
		&= [(\ln^+ \! |\cdot|  * v](x)- \Bigl( \ln |x| \int_{B_{|x|}} v(y)dy + \int_{\R^N \setminus B_{|x|}} \ln |y| v(y)dy \Bigr). 
	\end{align*}
	We thus conclude $(\ref{eq:pointwise-convolution})$.
\end{proof}

From Lemma~\ref{converse-inequality}, Lemma~\ref{sec:newtons-theorem} and Fubini's theorem, we have the following result.

\begin{corollary}
	\label{cor-newtons-theorem}
	Let $v,w \in L^1_{loc}(\R^N)$ be nonnegative radial functions. 
	\begin{enumerate}
		\item If $b_\pm(v,w)< \infty$ and $v \not \equiv 0$, $w \not \equiv 0$, then $v,w \in L_{ln}^1(\R^N)$ and 
		\begin{align}
			\frac{b_0(v,w)}{\omega_{N-1}^2}&=\int_{0}^{\infty} r^{N-1} v(r)\bigg( \ln \frac{1}{r} \int_{0}^{r}\rho^{N-1}w(\rho)d\rho +\int_{r}^{+\infty}\rho^{N-1}\ln\dfrac{1}{\rho}w(\rho)d\rho\bigg)dr
			\\  
			& = \int_{0}^{1} r^{N-1} w(r)\bigg( \ln \frac{1}{r} \int_{0}^{r}\rho^{N-1}v(\rho)d\rho +\int_{r}^{+\infty}\rho^{N-1}\ln\dfrac{1}{\rho}v(\rho)d\rho\bigg)dr.
		\end{align}
		\item If $b_\pm(v)< \infty$, then $v \in L_{ln}^1(\R^N)$ and 
		\begin{equation*}
			\dfrac{b_0(v)}{\omega_{N-1}^2}=2\int_{0}^{\infty} r^{N-1} v(r)\ln \frac{1}{r} \int_{0}^{r}\rho^{N-1}v(\rho)d\rho dr. 
		\end{equation*}
	\end{enumerate}
\end{corollary}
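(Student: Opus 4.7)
The plan is to reduce the double integral defining $b_0(v,w)$ to a single radial integral by combining Fubini's theorem with Newton's formula from Lemma~\ref{sec:newtons-theorem}.

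First I would verify the claim that $v,w \in L^1_{ln}(\R^N)$. Since the kernel $\ln^+|x-y|$ is symmetric in its two arguments, the form $b_-$ is symmetric, so $b_-(v,w) = b_-(w,v) < \infty$. Applying Lemma~\ref{converse-inequality} once with the pair $(v,w)$ (using $v \not\equiv 0$) gives $w \in L^1_{ln}(\R^N)$, and applying it again with $(w,v)$ (using $w \not\equiv 0$) gives $v \in L^1_{ln}(\R^N)$.

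Next, since $b_+(v,w)$ and $b_-(v,w)$ are both finite, the positive and negative parts of the integrand $\ln\frac{1}{|x-y|}v(x)w(y)$ are separately integrable on $\R^N \times \R^N$. Hence Fubini's theorem applies and yields
\begin{equation*}
b_0(v,w) \;=\; \int_{\R^N} v(x)\Bigl[\bigl(\ln \tfrac{1}{|\cdot|}\bigr) \ast w\Bigr](x)\,dx.
\end{equation*}
Because $w$ is nonnegative, radial and in $L^1_{ln}(\R^N)$, Lemma~\ref{sec:newtons-theorem} (applied to $\ln|\cdot|\ast w$ and changed in sign) expresses the inner convolution, writing the ball and complement integrals in spherical coordinates, as the radial function
\begin{equation*}
\Bigl[\bigl(\ln \tfrac{1}{|\cdot|}\bigr)\ast w\Bigr](x)
\;=\; \omega_{N-1}\Bigl(\ln\tfrac{1}{|x|}\int_0^{|x|}\rho^{N-1}w(\rho)\,d\rho
+ \int_{|x|}^{\infty}\rho^{N-1}\ln\tfrac{1}{\rho}\,w(\rho)\,d\rho\Bigr).
\end{equation*}
Then I convert the outer integral over $\R^N$ to polar coordinates (using the radiality of $v$), which produces the extra factor $\omega_{N-1}$ and gives exactly the first stated equality after dividing by $\omega_{N-1}^2$. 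The second equality is simply the symmetry $b_0(v,w) = b_0(w,v)$ combined with the same computation with $v$ and $w$ interchanged. Part (ii) is then the special case $v = w$ of part (i), with the symmetric cross term doubling to produce the factor $2$ (after noting that $\int_0^r \rho^{N-1} v(\rho) \ln(1/r) d\rho + \int_r^\infty \rho^{N-1} \ln(1/\rho) v(\rho) d\rho$ inserted into $\int r^{N-1}v(r)\,\cdot\,dr$ and the symmetric version give the same integral by Fubini).

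The only real technical point is the justification of Fubini, but this is automatic once both $b_+(v,w)$ and $b_-(v,w)$ are finite, so I do not expect a serious obstacle. The main care will be in bookkeeping the factors of $\omega_{N-1}$ and in correctly applying Lemma~\ref{sec:newtons-theorem} to the nonradial product $v(x) w(y)$ by first integrating in $y$ for each fixed $x$.
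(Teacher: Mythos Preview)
Your proposal is correct and follows precisely the route the paper indicates: the paper presents this corollary as an immediate consequence of Lemma~\ref{converse-inequality}, Lemma~\ref{sec:newtons-theorem} and Fubini's theorem, and your argument supplies exactly those details in the natural order. Your handling of part~(ii) via the symmetry swap of $r$ and $\rho$ in the second summand is the expected way to extract the factor~$2$.
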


\bigskip

\section{Preliminary estimates: the unit ball versus the entire space}

From \cite[Lemma 7.17]{LiebLoss}, we can deduce the following P\'olya-Szego  inequality.

  \begin{lemma}
	\label{symmetrized-invariance-H-1}   
	For any $u \in W^{1,N}(\R^N)$ we have $u^* \in W^{1,N}(\R^N)$ and 
	$$
	|u^*|_N = |u|_N, \quad |\nabla u^*|_N \le |\nabla u|_N 
	$$
	Consequently, we have $u^* \in \cB_1$ if $u \in \cB_1$ and $u^* \in \cB_\infty$ if $u \in \cB_\infty$. 
\end{lemma}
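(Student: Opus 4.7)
The proof is a direct invocation of the classical Pólya–Szegő inequality together with the equimeasurability of Schwarz symmetrization, so the plan is to unpack the citation and then handle the ball case by a support argument.

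First I would note that Schwarz symmetrization is equimeasurable: for every $t>0$ one has $|\{|u|>t\}|=|\{u^*>t\}|$. By the layer‑cake representation
\[
\int_{\R^N} |u|^N\,dx = N\int_0^\infty t^{N-1} |\{|u|>t\}|\,dt,
\]
this immediately yields $|u^*|_N=|u|_N$. The inequality $|\nabla u^*|_N\le |\nabla u|_N$ is exactly the content of \cite[Lemma~7.17]{LiebLoss}, applied with $p=N$, so I would invoke it verbatim and record that $u^*\in W^{1,N}(\R^N)$.

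For the consequence in $\cB_\infty$, the two inequalities combine as
\[
\|u^*\|_N^N = |\nabla u^*|_N^N + |u^*|_N^N \le |\nabla u|_N^N + |u|_N^N = \|u\|_N^N \le 1,
\]
so $u^*\in \cB_\infty$ whenever $u\in\cB_\infty$. For the $\cB_1$ case the extra point is to verify that symmetrization preserves the zero‑boundary condition on $B_1$. Given $u\in W^{1,N}_0(B_1)$ with $|\nabla u|_N\le 1$, the extension by zero $\tilde u\in W^{1,N}(\R^N)$ satisfies $\supp \tilde u\subset \overline{B_1}$, so
\[
|\{|\tilde u|>0\}|\le |B_1|,
\]
and therefore $\tilde u^*$ vanishes outside the ball of the same measure, namely outside $\overline{B_1}$. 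Hence $u^*:=\tilde u^*\bigr|_{B_1}$ lies in $W^{1,N}_0(B_1)$ (approximating by truncations of $\tilde u$ one sees that $\tilde u^*$ is the zero‑extension of an element of $W^{1,N}_0(B_1)$), and by the Pólya–Szegő inequality $|\nabla u^*|_N\le |\nabla u|_N\le 1$, so $u^*\in\cB_1$.

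There is essentially no obstacle here beyond quoting \cite[Lemma~7.17]{LiebLoss}; the only care needed is in the $\cB_1$ case, where one has to invoke the fact that zero‑extension across $\partial B_1$ is valid precisely because $u\in W^{1,N}_0(B_1)$, and that the measure‑preservation of supports under $u\mapsto u^*$ forces $u^*$ to again vanish outside $B_1$.
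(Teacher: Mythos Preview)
Your proposal is correct and takes essentially the same approach as the paper, which simply states that the lemma follows from \cite[Lemma~7.17]{LiebLoss} without further detail. You have in fact supplied more: the equimeasurability argument for $|u^*|_N=|u|_N$, the verification that $u^*\in\cB_\infty$, and the support argument showing $u^*\in W^{1,N}_0(B_1)$ in the $\cB_1$ case are all left implicit in the paper.
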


\smallskip

 \begin{lemma}
 \label{lemma-nonuniform-finiteness-convolution}
Let $g: \R \to \R$ be a continuous function satisfying 
\begin{equation}
  \label{eq:general-growth-condition-lemma}
|g(t)| \le c e^{\alpha |t|^{N/(N-1)}} \qquad \text{for $t \in \R$ with constants $c,\alpha>0$.}
\end{equation}
\begin{enumerate}
\item[(i)] If $u \in W^{1,N}_0(B_1)$, then we have $g(u) \in L^s(B_1)$ for $1 \le s < \infty$ and
$$ 
\ln^+ \frac{1}{|\cdot|} * \bigl(1_{B_1} g(u)\bigr), \quad  \ln^+ |\cdot| * \bigl(1_{B_1} g(u)\bigr) \quad \in L^\infty(B_1).
$$
\item[(ii)] If $u \in W^{1,N}(\R^N)$, then $g(u) \in L^s_{loc}(\R^N)$ for $1 \le s < \infty$.\vspace{0.1cm} 
 
If, in addition, $|g(t)|=O(|t|)$ as $t \to 0$, then 
$g(u) \in L^2(\R^N)$ and 
$$
\ln^+ \frac{1}{|\cdot|} * g(u) \quad \in L^s(\R^N) \qquad \text{for $2 \le s \le \infty$.} 
$$
\end{enumerate}
\end{lemma}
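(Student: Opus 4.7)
My approach is to combine Trudinger--Moser type estimates with H\"older's and Young's convolution inequalities throughout.

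For part (i), since $u \in W^{1,N}_0(B_1)$, I invoke the classical Trudinger inequality to conclude that $e^{\beta |u|^{N/(N-1)}} \in L^1(B_1)$ for every $\beta > 0$; combined with \eqref{eq:general-growth-condition-lemma} this immediately gives $g(u) \in L^s(B_1)$ for every $1 \le s < \infty$. To control the two convolutions pointwise on $B_1$, I note that $|x-y| < 2$ for $x,y \in B_1$, so $\ln^+|x-y| \le \ln 2$ and the convolution with $\ln^+|\cdot|$ is dominated by $(\ln 2)\|g(u)\|_{L^1(B_1)}$. For $\ln^+\frac{1}{|\cdot|}\ast (1_{B_1}g(u))$, H\"older's inequality with any exponent $p \in (1,\infty)$ yields the uniform bound $\|\ln^+\tfrac{1}{|\cdot|}\|_{L^p(B_2)}\,\|g(u)\|_{L^{p'}(B_1)}$, whose first factor is finite because $\int_0^1 r^{N-1}(\log 1/r)^p\,dr<\infty$ for every $p<\infty$ and is independent of $x$.

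For part (ii), the local integrability $g(u) \in L^s_{loc}(\R^N)$ follows from part (i) by a standard cutoff trick: for any $R>0$, pick $\phi \in C_c^\infty(B_{2R})$ with $\phi \equiv 1$ on $B_R$; then $\phi u \in W^{1,N}_0(B_{2R})$, and part (i) applied to $\phi u$ gives $g(u)\in L^s(B_R)$. To obtain $g(u) \in L^2(\R^N)$ under $|g(t)|=O(|t|)$ as $t\to 0$, I split $\R^N=\{|u|\le t_0\}\cup\{|u|>t_0\}$ for a suitable $t_0$. On the super-level set, which has finite measure since $u \in L^N$, I control $\int e^{2\alpha |u|^{N/(N-1)}}dx$ by passing to the truncation $(|u|-t_0)_+$ and its Schwarz rearrangement, which is supported on a ball where the classical Trudinger--Moser inequality in the form \eqref{dis} applies. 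On the sub-level set, the pointwise bound $|g(u)|^2 \le C^2|u|^2$ has to be reconciled with the integrability information on $u$. Once $g(u)\in L^2(\R^N)$ is in hand, the final statement $\ln^+\frac{1}{|\cdot|}\ast g(u)\in L^s(\R^N)$ for $s\in[2,\infty]$ follows from Young's convolution inequality $L^p \ast L^2 \to L^s$ with $1+1/s=1/p+1/2$: for every such $s$, $p\in[1,2]$, and $\ln^+\frac{1}{|\cdot|}$ is compactly supported in $B_1$ and lies in $L^p(\R^N)$ for all $p<\infty$.

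The main obstacle I foresee is the $L^2(\R^N)$ bound on $g(u)$, specifically on the region $\{|u|\le t_0\}$: for $N\ge 3$ the embedding $W^{1,N}(\R^N)\hookrightarrow L^2(\R^N)$ fails, so the bound $|g(u)|\le C|u|$ cannot be exploited against $u \in L^N$ alone. The argument must make sharper quantitative use of the combined information $|g(t)|\le C\min(|t|,e^{\alpha |t|^{N/(N-1)}})$ together with the finite measure of the super-level sets of $u$ and interpolation between the Lebesgue exponents at which $u$ is known to lie. This is the step I expect to spend the most time on, as the subtle interaction between the near-zero decay of $g$ and the Trudinger--Moser geometry enters most essentially there.
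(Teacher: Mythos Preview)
Your treatment of part (i) is correct and essentially the same as the paper's. For the first assertion of (ii), your cutoff argument (take $\phi\in C_c^\infty(B_{2R})$ with $\phi\equiv 1$ on $B_R$, so $\phi u\in W^{1,N}_0(B_{2R})$ and $g(\phi u)=g(u)$ on $B_R$) is actually cleaner than the paper's route: the paper first reduces to $u=u^*$ by Schwarz symmetrization and then builds an explicit truncation
\[
U=(1+u^N(1))^{1/N}\bigl[u^{N/2}-u^{N/2}(1)\bigr]_+^{2/N}\in W^{1,N}_0(B_1)
\]
together with the pointwise inequality $u^{N/(N-1)}\le 1+U^{N/(N-1)}+u^{N/(N-1)}(1)$ to transfer back to the ball setting. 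Both approaches are valid; yours is shorter, and you only need the trivial remark that (i) works on any ball, not just $B_1$.

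Your concern about the $L^2(\R^N)$ claim is exactly the right place to worry, and in fact the paper's own proof does not resolve it. After reducing to radial $u=u^*$, the paper argues ``$|v|\le C|u|$ on $\R^N\setminus B_1$, so $v\,1_{\R^N\setminus B_1}\in L^2(\R^N)$ since $u\in L^2(\R^N)$.'' But for $N\ge 3$ one only has $u\in L^p(\R^N)$ for $p\ge N$; the embedding $W^{1,N}(\R^N)\hookrightarrow L^2(\R^N)$ fails. Worse, the assertion $g(u)\in L^2(\R^N)$ is false as stated: take $g(t)=t$, which satisfies both \eqref{eq:general-growth-condition-lemma} and $g(t)=O(|t|)$ as $t\to 0$, and any $u\in W^{1,N}(\R^N)\setminus L^2(\R^N)$ --- for $N\ge 3$ such $u$ exist, e.g.\ a smooth function equal to $|x|^{-\gamma}$ for $|x|\ge 1$ with $1<\gamma\le N/2$. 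So the obstacle you flagged is not a matter of finding a sharper estimate: the claim itself needs to be weakened (the paper's argument with $L^2$ replaced by $L^N$ does go through and yields $g(u)\in L^N(\R^N)$), and the convolution conclusion has to be adjusted accordingly via Young's inequality. This is a remnant of the planar case $N=2$, where $L^N=L^2$ and no issue arises.
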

\begin{proof}
We first recall that 
\begin{equation}
\label{general-nonuniform-growth-estimate}
\int_{B_1}e^{\delta |u|^{N/(N-1)}}dx < \infty \qquad \text{for any $u \in W^{1,N}_0(B_1)$ and any $\delta>0$}
\end{equation}
as shown in \cite{trudinger}  (see also \cite[p.195]{Lions}).
To prove (i), we now let $u \in W^{1,N}_0(B_1)$ and define $v= 1_{B_1} g(u)$. By (\ref{eq:general-growth-condition-lemma}) and (\ref{general-nonuniform-growth-estimate}), we have $v \in L^s(\R^N)$ for $s \in [1,\infty)$. In fact 
\begin{equation*}
    \int_{\mathbb{R}^N}\abs{v}^s dx=\int_{B_{1}}\abs{g(u)}^s dx \leq c^s \int_{B_{1}}(e^{\alpha |u|^{N/(N-1)}})^s dx \,\textless +\infty.
\end{equation*}
Moreover,
\begin{equation*}
\int_{\mathbb{R}^N}\bigg|\ln^+\dfrac{1}{\abs{x}}\bigg|^s dx=\int_{\mathbb{R}^N}\bigg(\ln^+\dfrac{1}{\abs{x}}\bigg)^s dx=\omega_{N-1}\int_0^1 \ln^s\dfrac{1}{r}r^{N-1}dr\,\textless +\infty;
\end{equation*}
\begin{equation}
  \label{eq:ln-plus-L-p}
\ln^+ \frac{1}{|\cdot|} \in L^s(\R^N) \qquad \text{for $s \in [1,\infty)$.}  
\end{equation}
Choosing $s=2$, we deduce by Young's inequality that $\ln^+\frac{1}{|\cdot|} * v \in L^\infty(\R^N)$. Moreover, we deduce that 
$$
[\ln^+|\cdot| * v](x) = \int_{B_1}\ln^+|x-y|v(y)\,dy < \ln 2 |v|_1 \qquad \text{for $x \in B_1$}
$$
and therefore $\ln^+|\cdot| * v \in L^\infty(B_1)$.\\ 
(ii) Let $u \in W^{1,N}(\R^N)$. For given $R>0$ and $s \in [1,\infty)$, we wish to prove that $1_{B_R}g(u) \in L^s(\R^N)$. For this, by (\ref{eq:general-growth-condition-lemma}), we may  assume without loss of generality that $g(t)=e^{\alpha |t|^{N/(N-1)}}$ for $t \in \R$. Then we have $[1_{B_R}g(u)]^* \le 1_{B_R}[g(u)]^* = 1_{B_R}g(u^*)$ and therefore 
$$
\bigl|1_{B_R}g(u)\bigr|_s = \bigl|[1_{B_R}g(u)]^*\bigr|_s \le \bigl| 1_{B_R}g(u^*) \bigr|_s,
$$
so it suffices to consider the case where $u=u^*$ in the following. Let $v := g(u)$. Since $u$ is locally bounded on $\R^N \setminus \{0\}$, the same is true for 
$v$. It thus suffices to prove that $v \in L^s(B_1)$. For this we consider the function
\begin{equation*}
   U := \Bigl(1 +u(1)^{\frac{N}{N-1}}\Bigr)^{\frac{N-1}{N}}[u^{\frac{N}{2(N-1)}}-u^{\frac{N}{2(N-1)}}(1)]_+^{\frac{2(N-1)}{N}}.
\end{equation*}
We prove that $U\in W^{1,N}_0(B_1)$. Indeed \begin{align*}
    &\int_{B_1} |\nabla U|^N\, dx =  \Bigl(1 +u(1)^{\frac{N}{N-1}}\Bigr)^{N-1}\int_{B_1} |\nabla [u^{\frac{N}{2(N-1)}}-u^{\frac{N}{2(N-1)}}(1)]_+^{\frac{2(N-1)}{N}}|^N\, dx  \\ &
    \leq 2^{N-2} \Bigl(1+ u(1)^N\Bigr) \int_{B_1}   [u^{\frac{N}{2(N-1)}}-u^{\frac{N}{2(N-1)}}(1)]_+^{N-2}  u^{\frac{(2-N)N}{2(N-1)}} |\nabla u|^N\, dx  \\ & \leq 2^{N-2} \Bigl(1+u(1)^N\Bigr) \int_{B_1} u^{\frac{N(N-2)}{2(N-1)}} u^{\frac{(2-N)N}{2(N-1)}} |\nabla u|^N\, dx= 2^{N-2} \Bigl(1+ u(1)^N\Bigr) \int_{B_1}|\nabla u|^N\, dx< + \infty.
\end{align*}

Let \begin{equation*}
    \phi=u^{\frac{N}{2(N-1)}}
\end{equation*}for which holds 
\begin{equation*}
   \phi^2=[\phi-\phi(1)]^2+2\phi(1)[\phi-\phi(1)]^+ +\phi^2(1)\leq [\phi-\phi(1)]^2+\phi^2(1)[\phi-\phi(1)]^2+1 +\phi^2(1)
\end{equation*}

\begin{equation} \label{test}
    =\phi^2(1)+1+[\phi-\phi(1)]^2(1+\phi^2(1)).
\end{equation}
By \eqref{test}, we have 
 \begin{equation*}
u^{\frac{N}{N-1}} \leq u^{\frac{N}{N-1}}(1)+1+ \Bigl(1+ u^{\frac{N}{N-1}}(1)\Bigr)\Bigl[u^{\frac{N}{2(N-1)}}-u^{\frac{N}{2(N-1)}}(1)\Bigr]^{2}_+
\end{equation*}
and thus, by the definition of $U$,
\begin{equation}\label{contocostantepag8}
u^{\frac{N}{N-1}}\leq u^{\frac{N}{N-1}}(1) +1+U^{\frac{N}{N-1}}.
\end{equation}

From \eqref{contocostantepag8} we have
$$
\int_{B_1} |v|^s\,dx = \int_{B_1} e^{ s\alpha u^{N/(N-1)}}\,dx \le 
e^{s \alpha \bigl(1+ u^{N/(N-1)}(1)\bigr)} \int_{B_1}e^{s \alpha U^{N/(N-1)}}\,dx < \infty
$$
by (\ref{general-nonuniform-growth-estimate}). It thus follows that $v \in L^s_{loc}(\R^N)$.\\
Next we assume that $|g(t)|=O(|t|)$ as $t \to 0$. To prove that $v=g(u) \in L^2(\R^N)$, by  (\ref{eq:general-growth-condition-lemma}) it suffices to consider the case where $g(t)=e^{\alpha t^{N/(N-1)}}-1$. 
It then follows that $|g(u)|_N = |[g(u)]^*|_N =|g(u^*)|_N$, so we may assume again that $u=u^*$.
Then $u$ is bounded on $\R^N \setminus B_1$, and therefore it follows 
$|v| \le C |u|$ on $\R^N \setminus B_1$, so $v 1_{\R^N \setminus B_1} \in L^2(\R^N)$ since $u \in L^2(\R^N)$. Since we already proved that 
$v 1_{B_1} \in L^2(\R^N)$, we infer that $v \in L^2(\R^N)$. By (\ref{eq:ln-plus-L-p}) and Young's inequality, it now follows that 
$\ln^+\frac{1}{|\cdot|} * v \in L^s(\R^N)$ for $2 \le s \le \infty$.
\end{proof}

From  Lemma~\ref{lemma-nonuniform-finiteness-convolution}, applied to $g=G$, we infer the following corollary.

\begin{corollary}$ $
\label{corollary-nonuniform-finiteness}
\begin{enumerate}
\item[(i)] If $G$ satisfies assumption $(G_0)$ and $u \in W^{1,N}_0(B_1)$, then $\Phi_\pm(u)<\infty$.
\item[(ii)] If $G$ satisfies assumption $(G_1)$ and $u \in W^{1,N}(\R^N)$, then $\Psi_+(u)<\infty$. 
\end{enumerate}
\end{corollary}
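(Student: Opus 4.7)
The plan is to read both statements of the corollary as direct applications of Lemma~\ref{lemma-nonuniform-finiteness-convolution} with $g = G$, after rewriting $\Phi_\pm$ and $\Psi_+$ as single integrals against a convolution. By Fubini (which is justified because everything in sight is nonnegative),
\begin{equation*}
\Phi_+(u) = \int_{B_1} G(u(x))\Bigl[\ln^+\tfrac{1}{|\cdot|} * \bigl(1_{B_1}G(u)\bigr)\Bigr](x)\,dx,
\end{equation*}
and similarly $\Phi_-(u)$ is the same integral with $\ln^+|\cdot|$ replacing $\ln^+\frac{1}{|\cdot|}$; analogously $\Psi_+(u) = \int_{\R^N} G(u(x))\bigl[\ln^+\tfrac{1}{|\cdot|}* G(u)\bigr](x)\,dx$.

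For part (i), I would take $u \in W^{1,N}_0(B_1)$ and apply Lemma~\ref{lemma-nonuniform-finiteness-convolution}(i) with $g = G$, which is legitimate since $(G_0)$ is precisely the growth hypothesis \eqref{eq:general-growth-condition-lemma}. This gives simultaneously $G(u) \in L^1(B_1)$ and
\begin{equation*}
\ln^+\tfrac{1}{|\cdot|}*\bigl(1_{B_1}G(u)\bigr),\quad \ln^+|\cdot|*\bigl(1_{B_1}G(u)\bigr) \ \in\ L^\infty(B_1).
\end{equation*}
Bounding the convolutions in sup-norm and pulling them out of the outer integral yields $\Phi_\pm(u)\le \|\ln^+\tfrac{1}{|\cdot|}* (1_{B_1}G(u))\|_\infty\,|G(u)|_{L^1(B_1)}<\infty$, and the analogous estimate for $\Phi_-$.

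For part (ii), assumption $(G_1)$ is exactly what is needed to invoke the second half of Lemma~\ref{lemma-nonuniform-finiteness-convolution}(ii): we obtain $G(u)\in L^2(\R^N)$ together with $\ln^+\tfrac{1}{|\cdot|}*G(u)\in L^2(\R^N)$. Then the Cauchy--Schwarz inequality applied to the integral representation of $\Psi_+(u)$ gives
\begin{equation*}
\Psi_+(u)\ \le\ |G(u)|_2\,\Bigl|\ln^+\tfrac{1}{|\cdot|}*G(u)\Bigr|_2\ <\ \infty.
\end{equation*}

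No real obstacle is anticipated: the entire content of the corollary is packaged in the preceding lemma, and the corollary just records the two immediate consequences for $\Phi_\pm$ and $\Psi_+$. The only minor technical point to check is the applicability of Fubini to reduce the double integral to a convolution integral, but since the kernels $\ln^+\frac{1}{|x-y|}$, $\ln^+|x-y|$ and the integrands $G(u(x))G(u(y))$ are all nonnegative, Tonelli's theorem applies without restriction.
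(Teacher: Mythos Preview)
Your proof is correct and follows exactly the route the paper intends: the corollary is stated as an immediate consequence of Lemma~\ref{lemma-nonuniform-finiteness-convolution} applied with $g=G$, and you have simply made explicit the Tonelli/H\"older step that converts the lemma's integrability conclusions into finiteness of $\Phi_\pm$ and $\Psi_+$. Nothing further is needed.
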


\begin{remark}
\label{m-F-greater-F-0}
(i) If $G$ satisfies $(G_0)$, it follows from Corollary~\ref{corollary-nonuniform-finiteness}(i) and Corollary~\ref{cor-newtons-theorem}(ii) that 
\begin{align*}
&\Phi(u)= 2\omega_{N-1}^2\int_{0}^{1} r^{N-1} G(u(r))\ln \frac{1}{r} \int_{0}^{r}\rho^{N-1}G(u(\rho))d\rho dr \\ & > 2\omega_{N-1}^2\int_{0}^{1} r^{N-1} G(u(0))\ln \frac{1}{r} \int_{0}^{r}\rho^{N-1}G(u(0))d\rho dr=\Phi(0)  
\end{align*}
for every $u \in W^{1,N}_0(B_1) \setminus \{0\}$. Hence $m_1(N, G) \in \bigl(\Phi(0), \infty \bigr]$,
and therefore $m_1(N, G)$ is not attained at $u=0$.\\
(ii) If $G$ satisfies $(G_1)$, we have 
$$
\Psi(u)=\Phi(u)>\Phi(0)= \Psi(0)=0\quad \text{for every function 
$u \in W^{1,N}_0(B_1) \subset W^{1,N}(\R^N)$ with $u \not \equiv 0.$}
$$ 
In particular, we have $m_\infty(N, G) \in \bigl(0, \infty \bigr]$, 
and therefore $m_\infty(N, G)$ is not attained at $u=0$.
\end{remark}
\begin{lemma}
\label{general-boundedness}
Let $\beta_1, \beta_2\le 0$ satisfy $\beta_1 + \beta_2 \le -\dfrac{N}{N-1}$, and let $\cB_{1,rad}:= \{u \in \cB_1\::\: \text{$u$ radial}\}$. 
Then
\begin{equation}
  \label{eq:def-phi-beta-1-beta-2}
\Phi_{\beta_1,\beta_2}(u_1,u_2):= \omega_{N-1}^2\int_0^1 r^{N-1}(1+\abs{u_1(r)})^{\beta_1}e^{\alpha_N |u_1|^{N/(N-1)}(r)}\ln\dfrac{1}{r} \times\end{equation} \begin{equation*}
\times \int_0^r \rho^{N-1}(1+\abs{u_2(\rho)})^{\beta_2}e^{\alpha_N|u_2|^{N/(N-1)}(\rho)}d\rho\, dr
\end{equation*}
defines a bounded functional $\Phi_{\beta_1,\beta_2}: \cB_{1,rad} \times \cB_{1,rad} \to [0,\infty)$. 
\end{lemma}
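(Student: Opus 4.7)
The plan is to reduce the lemma to a one-dimensional weighted integral estimate via the Moser substitution $r = e^{-t/N}$. For a radial $u \in \cB_{1,rad}$, set $\phi(t) := u(e^{-t/N})$ for $t \ge 0$. Then $\phi(0)=u(1)=0$, and H\"older's inequality applied to $\phi(t) = \int_0^t \phi'(s)\,ds$ together with the constraint $\|\nabla u\|_N \le 1$ yields the pointwise bound $\alpha_N |\phi(t)|^{N/(N-1)} \le t$. Setting $\sigma(t) := t - \alpha_N |\phi(t)|^{N/(N-1)} \ge 0$, the Trudinger-Moser inequality (\ref{dis}) translates into $\int_0^\infty e^{-\sigma(t)}\,dt \le C$. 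A direct change of variables rewrites
\[
\Phi_{\beta_1,\beta_2}(u_1,u_2) = \frac{\omega_{N-1}^2}{N^3}\int_0^\infty t\,F_1(t) \int_t^\infty F_2(s)\,ds\,dt, \quad F_i(t):=(1+|\phi_i(t)|)^{\beta_i} e^{-\sigma_i(t)}.
\]

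The main technical obstacle is the auxiliary estimate
\[
\int_0^\infty t^p F_i(t)\,dt \le C \qquad \text{for every } p \in \bigl[0,\, -\beta_i(N-1)/N\bigr].
\]
I would prove this by splitting $(0,\infty)$ at the threshold $\sigma_i(t)=t/2$. On $\{\sigma_i \ge t/2\}$ the bound $e^{-\sigma_i(t)} \le e^{-t/2}$ makes the integrand bounded by $t^p e^{-t/2}$, which is trivially integrable. On $\{\sigma_i<t/2\}$ the inequality $\sigma_i(t)<t/2$ forces $|\phi_i(t)| \ge (t/(2\alpha_N))^{(N-1)/N}$, so for $t$ beyond the absolute threshold where $|\phi_i|\ge 1$ we have $(1+|\phi_i|)^{\beta_i}\le|\phi_i|^{\beta_i}\le C\,t^{\beta_i(N-1)/N}= C\,t^{-p}$. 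The integrand on this region reduces to $C e^{-\sigma_i(t)}$, and the Trudinger-Moser integrability $\int e^{-\sigma_i}\le C$ closes the estimate; the bounded region on the left of the absolute threshold contributes only a constant.

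To conclude, set $p_i := \min\!\bigl(1,\, -\beta_i(N-1)/N\bigr)$, so that the hypothesis $\beta_1+\beta_2 \le -N/(N-1)$ is exactly equivalent to $p_1+p_2\ge 1$. Since $1\le (s/t)^{p_2}$ for $s\ge t$, the auxiliary estimate applied to $F_2$ yields the tail bound
\[
\int_t^\infty F_2(s)\,ds \le t^{-p_2}\int_0^\infty s^{p_2} F_2(s)\,ds \le C\,t^{-p_2}.
\]
Substituting into the expression for $\Phi_{\beta_1,\beta_2}$ and splitting the outer integral at $t=1$,
\[
\Phi_{\beta_1,\beta_2}(u_1,u_2) \le C\int_0^1 F_1(t)\,dt + C\int_1^\infty t^{1-p_2} F_1(t)\,dt,
\]
both terms are uniformly bounded by the auxiliary estimate applied to $F_1$ with $p=0$ and $p=1-p_2\le -\beta_1(N-1)/N$ respectively, which completes the plan. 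The sharp threshold $\beta_1+\beta_2=-N/(N-1)$ surfaces precisely as the condition $1-p_2 \le -\beta_1(N-1)/N$ needed to invoke the auxiliary estimate on the second term.
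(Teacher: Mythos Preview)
Your proof is correct. Both you and the paper rest on the same core splitting---where $|u_i|$ is large or small compared to a fixed power of $\ln(1/r)$---but organize the pieces differently. The paper stays in $r$-coordinates and performs a direct four-way case analysis over the product sets $A_1^\pm \times A_2^\pm$ (with $A_i^\pm$ defined by the threshold $u_i(r)\gtrless[\eps(-\ln r)]^{(N-1)/N}$), bounding each of the four resulting double integrals separately. You instead pass to Moser coordinates $r=e^{-t/N}$, distill the same threshold idea into a single auxiliary moment bound $\int_0^\infty t^p F_i\,dt\le C$ for each factor, and then combine the two factors via the tail estimate $\int_t^\infty F_2\le Ct^{-p_2}$. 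Your packaging is more modular---the auxiliary lemma is proved once and reused, and the role of the sharp exponent $\beta_1+\beta_2=-N/(N-1)$ is isolated in the single condition $1-p_2\le -\beta_1(N-1)/N$---while the paper's four-term expansion is more explicit. Both ultimately hinge on the same two ingredients: the Trudinger--Moser bound (rewritten as $\int_0^\infty e^{-\sigma_i}\,dt\le C$) on the region where $u_i$ is large, and the direct pointwise bound $e^{\alpha_N|u_i|^{N/(N-1)}}\le r^{-\alpha_N\eps}$ (equivalently $e^{-\sigma_i}\le e^{-ct}$) on the region where $u_i$ is small.
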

\begin{proof}
We fix $0 < \epsilon < 1 /\alpha_N$. Moreover, we let $u_1,u_2 \in \cB_{1,rad}$, and we define 
$$
A_i^+:= \{r \in (0,1]\::\: u_i(r) \ge  [ \eps(-\ln r)]^{(N-1)/N}\},\qquad
A_i^-:= \{r \in (0,1]\::\: u_i(r) < [\eps(-\ln r)]^{(N-1)/N} \}
$$
for $i= 1,2$. With $v_i:= (1+|u_i|)^{\beta_i} e^{\alpha_N u_i^{N/(N-1)}}$ we then have
\begin{equation}
  \label{eq:A--estimate}
v_i(r) \le e^{\alpha_N |u_i|^{N/(N-1)}(r)} \le r^{-\alpha_N \eps}\qquad \text{for $r \in A_i^-$,}
\end{equation}
and
\begin{equation}
  \label{eq:A-+estimate}
v_i(r) \le \bigl(1+ [ \eps(-\ln r)]^{(N-1)/N}\bigr)^{\beta_i} e^{\alpha_N |u_i|^{N/(N-1)}(r)} \qquad \text{for $r \in A_i^+$.}
\end{equation}
Moreover, 
\begin{align*}
&\Phi_{\beta_1,\beta_2}(u_1,u_2)
= \omega_{N-1}^2\bigg(\int_{A_1^+} r^{N-1} v_1(r) \ln \frac{1}{r} \int_{A_2^+ \cap [0,r]} \rho^{N-1}  v_2(\rho)d\rho dr+\\ 
&+ \int_{A_1^-} r^{N-1} v_1(r) \ln \frac{1}{r} \int_{A_2^- \cap [0,r]} \rho^{N-1}  v_2(\rho)d\rho dr 
+ \int_{A_1^+} r^{N-1} v_1(r) \ln \frac{1}{r} \int_{A_2^- \cap [0,r]} \rho^{N-1}  v_2(\rho)d\rho dr+\\  
&+ \int_{A_1^-} r^{N-1} v_1(r) \ln \frac{1}{r} \int_{A_2^+ \cap [0,r]} \rho^{N-1}  v_2(\rho)d\rho dr\bigg),
\end{align*}
where 
\begin{align*}
\omega_{N-1}^2\int_{A_1^-} r^{N-1}v_1(r) \ln \frac{1}{r} \int_{A_2^- \cap [0,r]} \rho^{N-1}  v_2(\rho)d\rho dr&\le\omega_{N-1}^2  \int_{A_1^-} r^{N-1-\alpha_N \eps } \ln \frac{1}{r} 
\int_{A_2^- \cap [0,r]} \rho^{N-1-\alpha_N \eps }d\rho dr\\
&\le \omega_{N-1}^2\frac{1}{N-\alpha_N\eps} \int_{0}^1 r^{2N-1-2\alpha_N \eps } \ln \frac{1}{r} dr =C_1 < \infty
\end{align*}
and 
\begin{align*}
&\omega_{N-1}^2\int_{A_1^+} r^{N-1}v_1(r) \ln \frac{1}{r} \int_{A_2^+ \cap [0,r]} \rho^{N-1}  v_2(\rho)d\rho dr \\
&\le \omega_{N-1}^2\int_{0}^1 \bigl(1+ [\eps(-\ln r)]^{(N-1)/N}\bigr)^{\beta_1} e^{\alpha_{N} |u_1|^{(N-1)/N}(r)}\ln \frac{1}{r} \times \\ &\times   
\int_{A_2^+ \cap [0,r]} \rho^{N-1} 
\bigl(1+ [\eps(-\ln \rho)]^{(N-1)/N}\bigr)^{\beta_2} e^{\alpha_{N} |u_2|^{N/(N-1)}(\rho)} d\rho dr\\
&\le \omega_{N-1}^2 \int_0^1 (-\ln r)(1+ [\eps(-\ln r)]^{(N-1)/N}\bigr)^{\beta_1+\beta_2} 
r^{N-1} e^{\alpha_N |u_1|^{N/(N-1)}(r)} \int_0^r  
\rho^{N-1} e^{\alpha_N |u_2|^{N/(N-1)}(\rho)}d\rho dr\\
&\le \omega_{N-1}^2\frac{1}{\eps} \Bigl(\int_0^1 r^{N-1} e^{\alpha_N |u_1|^{N/(N-1)}(r)}dr \Bigr) \omega_{N-1} \Bigl(\int_0^1 \rho^{N-1} e^{\alpha_N |u_2|^{N/(N-1)}(\rho)}d\rho \Bigr)\le C_2.
\end{align*}
Here we used the assumption $\beta_1 + \beta_2 \le -\frac{N}{(N-1)}$ and the Trudinger-Moser inequality (\ref{dis}). Moreover, $C_1,C_2,\dots$ are positive constants independent of $u_1,u_2 \in \cB_{1,rad}$. We also have
\begin{align*}
&\omega_{N-1}^2\int_{A_1^+} r^{N-1} v_1(r) \ln \frac{1}{r} \int_{A_2^- \cap [0,r]} \rho^{N-1}  v_2(\rho)d\rho dr\\ &\le \omega_{N-1}^2  \int_{0}^1 r^{N-1} \bigl(1+ [\eps(-\ln r)]^{(N-1)/N}\bigr)^{\beta_1} e^{\alpha_N |u_1|^{N/(N-1)}(r)} \ln \frac{1}{r} \int_{0}^r 
\rho^{N-1-\alpha_N \eps } d\rho dr\\
&\le \omega_{N-1}^2 \frac{1}{N-\alpha_N \eps} \int_{0}^1 r^{2N-1-\alpha_N \eps} \Bigl(\ln \frac{1}{r}\Bigr)e^{\alpha_N |u_1|^{N/(N-1)}(r)}\,dr \le C_3 \omega_{N-1} \int_{0}^1 r^{N-1} e^{\alpha_N |u_1|^{N/(N-1)}(r)}\,dr \le C_4
\end{align*}
and
\begin{align*}
&\omega_{N-1}^2\int_{A_1^-} r^{N-1} v_1(r) \ln \frac{1}{r} \int_{A_2^+ \cap [0,r]} \rho^{N-1}  v_2(\rho)d\rho dr\\ & \le \omega_{N-1}^2 \int_{0}^1 r^{N-1-\alpha_N \eps } \ln \frac{1}{r}   \int_{0}^r 
\rho^{N-1} e^{\alpha_N |u_2|^{N/(N-1)}(\rho)} d\rho dr\\
& \le \omega_{N-1}  \int_{0}^1 r^{N-1-\alpha_N \eps} \ln \frac{1}{r}dr \omega_{N-1}\int_{0}^1 
\rho^{N-1} e^{\alpha_N |u_2|^{N/(N-1)}(\rho)} d\rho \le C_5 \omega_{N-1} \int_{0}^1 r^{N-1-\alpha_N \eps} \ln \frac{1}{r}\,dr \le C_6
\end{align*}
again by the Trudinger-Moser inequality (\ref{dis}).
The proof is thus finished. 
\end{proof}
$\bigskip$

\section{Maximization problem on the ball}\label{sec:finiteness-result}

In this section we complete the proof of Theorem~\ref{sec:introduction-main-thm-C-F2}. We recall that 
$\cB_1:= \{u \in W^{1,N}_0(B_1)\::\: |\nabla u|_N \le 1\}$, and  
$$
\cB_1^*:=  \{u^* \::\: u \in \cB_1\}
$$
denotes the corresponding Schwarz symmetrized set. By Lemma~\ref{symmetrized-invariance-H-1}, we then have $\cB_1^* \subset \cB_1$.
\begin{lemma}
\label{C-B-1-restimate}
Suppose that $G$ satisfies $(G_0)$ and has at most $0$-critical growth. Then the functional $\Phi_-$ is uniformly bounded on $\cB_1$.   
\end{lemma}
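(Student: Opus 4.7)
The plan is to exploit two simple facts: first, on the bounded domain $B_1$ the positive part of the logarithm is trivially bounded above, since $|x-y|\le 2$ for $x,y \in B_1$; second, the assumption of at most $0$-critical growth together with the classical Trudinger-Moser inequality (\ref{dis}) gives a uniform $L^1$ bound on $G(u)$ over $\cB_1$. These two ingredients combine to yield a uniform bound on $\Phi_-$ via Fubini.

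More precisely, the first step is to observe that for $x,y \in B_1$ we have $|x-y| \le 2$, hence $\ln^+ |x-y| \le \ln 2$. Therefore
\begin{equation*}
\Phi_-(u) = \int_{B_1}\int_{B_1} \ln^+|x-y|\, G(u(x)) G(u(y))\, dx\, dy \le (\ln 2)\bigl(|G(u)|_{L^1(B_1)}\bigr)^2.
\end{equation*}
The second step is to estimate $|G(u)|_{L^1(B_1)}$ uniformly for $u \in \cB_1$. Since $G$ has at most $0$-critical growth, we have $G(t) \le c\, e^{\alpha_N |t|^{N/(N-1)}}$ for all $t \in \R$, and thus the Trudinger-Moser inequality (\ref{dis}) yields
\begin{equation*}
|G(u)|_{L^1(B_1)} \le c \int_{B_1} e^{\alpha_N |u|^{N/(N-1)}}\,dx \le c\cdot c(B_1)
\end{equation*}
for every $u \in \cB_1$.

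Combining the two estimates gives $\Phi_-(u) \le (\ln 2)\,\bigl(c\,c(B_1)\bigr)^2$ for every $u \in \cB_1$, which is the desired uniform bound. There is no real obstacle here: the argument is just a direct consequence of the compactness of $\overline{B_1}$ (which controls $\ln^+|x-y|$ pointwise) and the borderline Trudinger-Moser inequality at exponent $\alpha_N$, which exactly matches the at most $0$-critical growth condition on $G$.
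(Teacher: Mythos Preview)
Your proof is correct and essentially the same as the paper's: both bound the kernel $\ln^+|x-y|$ by $\ln 2$ on $B_1\times B_1$ (the paper routes this through the inequality $b_-(v,w)\le 2\ln 2\,|v|_1|w|_1$ for functions supported in $B_1$) and then invoke the Trudinger--Moser inequality to control $|G(u)|_{L^1(B_1)}$ uniformly on $\cB_1$. Your direct pointwise bound on the kernel is in fact slightly cleaner and yields the better constant $\ln 2$ instead of $2\ln 2$.
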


\begin{proof}
Let $u \in \cB_1$. By definition and (\ref{2.5eq}) we have 
$$
\Phi_-(u)= b_-(1_{B_1}G(u),1_{B_1}G(u)) \le \bigl(2 \ln 2\bigr) |1_{B_1}G(u)|_1^2
$$
where 
$$
|1_{B_1} G(u)|_1 \le c_1 \int_{B_1}e^{\alpha_N |u|^{N/(N-1)}}\,dx \le c_2
$$
with constants $c_1,c_2>0$ independent of $u$ by assumption and the Trudinger-Moser inequality (\ref{dis}).
\end{proof}

\begin{proposition}
\label{C-finiteness}
Suppose that $G$ satisfies $(G_0)$ and has at most $\beta$-critical growth for some $\beta\le -\frac{N}{2(N-1)}$. Then we have 
$$
m_1(N, G) \le m_1^+(N,G)< \infty,\qquad \text{where}\qquad m_1(N, G):= \sup_{\cB_1}\Phi \quad \text{and}\quad m_1^+(N, G):= \sup_{\cB_1}\Phi^+.
$$
\end{proposition}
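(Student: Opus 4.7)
The inequality $m_1(N,G)\le m_1^+(N,G)$ is immediate from the decomposition $\Phi(u)=\Phi^+(u)-\Phi^-(u)$ with $\Phi^-(u)\ge 0$, so the real content is the finiteness of $\sup_{\cB_1}\Phi^+$. My plan is to first symmetrize to reduce to radial decreasing profiles, then split $b_+ = b_0 + b_-$ and control each piece separately, applying Lemma~\ref{general-boundedness} at exactly the threshold $\beta_1+\beta_2 = -N/(N-1)$.

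\textbf{Step 1: Symmetrization.} For $u \in \cB_1$, let $v := 1_{B_1}G(u)\in \cM_+(\R^N)$, so that $\Phi^+(u)=b_+(v)$. By Lemma~\ref{Riesz-rearrangement}(i), $b_+(v)\le b_+(v^*)$. I will verify that $v^*\le 1_{B_1} G(u^*)$ pointwise: for $t\ge G(0)$ the super-level sets $\{v>t\}$ and $\{1_{B_1}G(u^*)>t\}$ coincide since $G$ is even and increasing on $[0,\infty)$ (using Lemma~\ref{symmetrized-invariance-H-1} for $u^* \in \cB_1$), while for $0<t<G(0)$ the level set of $1_{B_1}G(u^*)$ has the maximal measure $|B_1|$. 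Hence the decreasing radial function $1_{B_1}G(u^*)$ has a pointwise larger distribution than $v^*$, forcing the claimed pointwise inequality. Monotonicity of $b_+$ in its nonnegative argument then gives
\[
\Phi^+(u)=b_+(v)\le b_+\bigl(1_{B_1}G(u^*)\bigr),
\]
so it suffices to bound $\Phi^+$ uniformly on the radial decreasing class $\cB_1^*$.

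\textbf{Step 2: Split and bound.} For radial decreasing $u\in\cB_1^*$ with $w:=1_{B_1}G(u)$ supported in $B_1$, write $b_+(w)=b_0(w)+b_-(w)$. For the negative part, \eqref{eq:b-second-ineq} yields $b_-(w)\le (2\ln 2)|w|_1^2$, and the at most $\beta$-critical growth (with $\beta\le 0$) combined with the classical Trudinger--Moser inequality~\eqref{dis} bounds $|w|_1\le c\int_{B_1}e^{\alpha_N|u|^{N/(N-1)}}dx$ uniformly on $\cB_1$. For the main term, Corollary~\ref{cor-newtons-theorem}(ii) gives the explicit radial formula
\[
\frac{b_0(w)}{\omega_{N-1}^2}=2\int_0^1 r^{N-1}G(u(r))\ln\frac{1}{r}\int_0^r \rho^{N-1}G(u(\rho))\,d\rho\,dr,
\]
which is nonnegative because $\ln(1/r)\ge 0$ on $(0,1)$. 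Using the growth bound $G(s)\le c\,e^{\alpha_N|s|^{N/(N-1)}}(1+|s|)^\beta$ in both factors, we obtain $b_0(w)\le 2c^2 \Phi_{\beta,\beta}(u,u)$ with $\Phi_{\beta,\beta}$ as defined in Lemma~\ref{general-boundedness}.

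\textbf{Step 3: Apply the threshold lemma.} The hypothesis $\beta\le -\frac{N}{2(N-1)}$ is exactly the borderline case $\beta_1+\beta_2=2\beta\le -\frac{N}{N-1}$ required by Lemma~\ref{general-boundedness}, so $\Phi_{\beta,\beta}(u,u)$ is bounded uniformly on $\cB_{1,\mathrm{rad}}$. Combining this with the uniform bound on $b_-(w)$, we conclude
\[
\Phi^+(u)\le b_+\bigl(1_{B_1}G(u^*)\bigr) = b_0\bigl(1_{B_1}G(u^*)\bigr)+b_-\bigl(1_{B_1}G(u^*)\bigr)\le C
\]
for every $u\in\cB_1$, with $C$ independent of $u$. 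This yields $m_1^+(N,G)\le C<\infty$, and hence $m_1(N,G)\le m_1^+(N,G)<\infty$. The only delicate step is the symmetrization argument showing $b_+(1_{B_1}G(u))\le b_+(1_{B_1}G(u^*))$, since $1_{B_1}G(u)$ is not itself of the form $G(v)$ for a symmetrized $v$; the pointwise comparison $v^*\le 1_{B_1}G(u^*)$ circumvents this issue cleanly.
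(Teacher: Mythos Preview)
Your proof is correct and follows essentially the same route as the paper: symmetrize to reduce to $\cB_1^*$, decompose $b_+ = b_0 + b_-$, control $b_-$ by \eqref{eq:b-second-ineq} together with the Trudinger--Moser bound on $|w|_1$, and control $b_0$ via the radial formula of Corollary~\ref{cor-newtons-theorem}(ii) combined with Lemma~\ref{general-boundedness} at the threshold $\beta_1+\beta_2\le -N/(N-1)$. Your pointwise comparison $v^*\le 1_{B_1}G(u^*)$ is in fact an equality (the distribution functions agree for all $t>0$), so the symmetrization step is exactly the paper's identity $[1_{B_1}G(u)]^* = 1_{B_1}G(u^*)$.
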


\begin{proof}
By Lemma~\ref{Riesz-rearrangement}, we have 
$$
\Phi^+(u)= b_+\bigl(1_{B_1}G(u),1_{B_1}G(u)\bigr)\le b_+\bigl([1_{B_1}G(u)]^*,[1_{B_1}G(u)]^*\bigr) = b_+\bigl(1_{B_1}G(u^*),1_{B_1}G(u^*) \bigr) = \Phi^+(u^*)
$$
for every $u \in \cB_1$ and since that $\cB_1^* \subset \cB_1$ we have
\begin{equation}
\label{eq:schwarz-characterization}
m_1^+(N,G)= \sup_{u \in \cB_1^*} \Phi^+(u).
\end{equation}
By assumption, we have 
\begin{equation}
  \label{eq:estimate-C-upperx}
G(s) \le c_1 \frac{e^{\alpha_N |s|^{N/(N-1)}}}{(1+ |s|)^{\frac{N}{2(N-1)}}}  \qquad \text{for every $s \in \R$} 
\end{equation}
with a constant $c_1>0$. Let $u \in \cB_1^*$ and $v:= 1_{B_1} G(u)$. By Corollary~\ref{corollary-nonuniform-finiteness}, we then have $b_\pm(v)<\infty$. Therefore we have, by Corollary~\ref{cor-newtons-theorem}(ii) and (\ref{eq:estimate-C-upperx}),  
\begin{align*}
&\frac{\Phi(u)}{2\omega_{N-1}^2}=
\frac{b_0(v)}{2 \omega_{N-1}^2}= \int_{0}^{1} r^{N-1} v(r)\ln \frac{1}{r} \int_{0}^{r}\rho^{N-1}v(\rho)d\rho dr \\ &\le \int_{0}^{1} r^{N-1} c_1 \frac{e^{\alpha_N |u|^{N/(N-1)}(r)}}{(1+ |u(r)|)^{\frac{N}{2(N-1)}}}\ln \frac{1}{r} \int_{0}^{r}\rho^{N-1}c_1 \frac{e^{\alpha_N |u|^{N/(N-1)}(\rho)}}{(1+ |u(\rho)|)^{\frac{N}{2(N-1)}}}d\rho dr\\
&\le c_2 
\end{align*}
with 
$$
c_2:= c_1^2 \sup_{u \in \cB_{1,rad}}\Phi_{\beta_1,\beta_2}(u,u) < \infty,
$$
where $\cB_{1,rad}$ and $\Phi_{\beta_1,\beta_2}$ are defined in Lemma~\ref{general-boundedness} for $\beta_1= \beta_2 = -\frac{N}{2(N-1)}$. Combining this inequality with Lemma~\ref{C-B-1-restimate}, we thus deduce that
\begin{equation}
  \label{eq:restricted-ineq-l-infty}
\Phi_+(u) = \Phi(u) + \Phi_-(u) \le c_3 \quad \text{for $u \in \cB_1^*$ with}\quad 
c_3 := 2\omega_{N-1}^2 c_2 + \sup_{\cB_1} \Phi_-  < \infty.
\end{equation}
Hence $m_1^+(N, G) \le c_3 < \infty$, and therefore also $m_1(G) \le m_1^+(N, G)< \infty$, as claimed.
\end{proof}
 \begin{proposition}
\label{C-infiniteness}
Suppose that $G$ satisfies $(G_0)$ and has at least $\beta$-critical growth for some $\beta>-\frac{N}{2(N-1)}$. Then there exists a sequence of functions $u_n \in \cB_1 \cap L^\infty(B_1)$ with $\Phi(u_n) \to \infty$ as $n \to \infty$.
\end{proposition}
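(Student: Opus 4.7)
The plan is to test $\Phi$ against a suitably rescaled Moser concentrating sequence, chosen so that its support is contained in a ball of radius strictly less than $1/2$. This device completely kills the negative part: if $x,y$ both lie in $B_\rho$ with $\rho<\tfrac12$ then $|x-y|<1$, hence $\ln^+|x-y|=0$, so $\Phi_-$ vanishes identically on such test functions and $\Phi=\Phi_+$. In particular no upper bound on $G$ is ever needed, so the constant $\alpha$ in $(G_0)$ plays no role in this direction.

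Fix $\rho\in(0,\tfrac12)$ and, for $n\ge 2$, set
\begin{equation*}
M_n(x):=\omega_{N-1}^{-1/N}
\begin{cases}
(\ln n)^{(N-1)/N}, & |x|\le \rho/n,\\
(\ln n)^{-1/N}\,\ln(\rho/|x|), & \rho/n\le |x|\le \rho,\\
0, & |x|\ge \rho.
\end{cases}
\end{equation*}
A routine radial computation, in the spirit of Moser's classical one, gives $M_n\in W_0^{1,N}(B_1)\cap L^\infty(B_1)$ with $|\nabla M_n|_N=1$, so $M_n\in\cB_1\cap L^\infty(B_1)$. By construction $\supp M_n\subset \overline{B_\rho}$, and the preceding remark yields $\Phi_-(M_n)=0$ and $\Phi(M_n)=\Phi_+(M_n)\ge 0$.

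The next step is to bound $\Phi(M_n)$ from below by discarding everything outside the plateau $B_{\rho/n}\times B_{\rho/n}$, on which $M_n$ is constant equal to the peak value $p_n:=\omega_{N-1}^{-1/N}(\ln n)^{(N-1)/N}$. Since $|x-y|\le 2\rho/n<1$ on this product, $\ln(1/|x-y|)\ge \ln(n/(2\rho))$, and therefore
\begin{equation*}
\Phi(M_n)\ \ge\ G(p_n)^2\,|B_{\rho/n}|^2\,\ln\!\bigl(n/(2\rho)\bigr).
\end{equation*}
Because $p_n\to\infty$, the at-least-$\beta$-critical lower bound on $G$ applies for large $n$ and yields $G(p_n)\ge c\,e^{\alpha_N p_n^{N/(N-1)}}p_n^{\beta}$. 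Using the identity $\alpha_N\,\omega_{N-1}^{-1/(N-1)}=N$ one computes $e^{\alpha_N p_n^{N/(N-1)}}=n^{N}$ and $p_n^{\beta}$ is of order $(\ln n)^{\beta(N-1)/N}$, so $G(p_n)^2$ is of order $n^{2N}(\ln n)^{2\beta(N-1)/N}$.

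Inserting these estimates, the factor $n^{2N}$ from $G(p_n)^2$ cancels the factor $n^{-2N}$ coming from $|B_{\rho/n}|^2$, leaving
\begin{equation*}
\Phi(M_n)\ \ge\ C\,(\ln n)^{\,2\beta(N-1)/N\,+\,1}
\end{equation*}
for some $C>0$ independent of $n$ and all $n$ large. The hypothesis $\beta>-\tfrac{N}{2(N-1)}$ is precisely what makes the exponent $\tfrac{2\beta(N-1)}{N}+1$ strictly positive, so $\Phi(M_n)\to\infty$, and since each $M_n\in \cB_1\cap L^\infty(B_1)$ the conclusion follows. I expect the only mildly delicate step in writing this out carefully to be the bookkeeping of constants in the chain $G(p_n)^2\,|B_{\rho/n}|^2\,\ln(n/(2\rho))\gtrsim (\ln n)^{2\beta(N-1)/N+1}$; the exact cancellation of the exponential factors in $n$ is the whole point of the sharp threshold and must be tracked precisely.
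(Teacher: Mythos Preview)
Your proof is correct and follows the same core idea as the paper: test $\Phi$ against the classical Moser concentrating sequence and show that the contribution from the plateau alone grows like $(\ln n)^{2\beta(N-1)/N+1}$. The exponential cancellation $e^{\alpha_N p_n^{N/(N-1)}}=n^N$ against $|B_{\rho/n}|^2\sim n^{-2N}$ is carried out exactly as in the paper.

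The one genuine difference is how the negative part is handled. The paper takes the Moser functions supported on the full ball $B_1$ and invokes the Newton-type representation (Corollary~\ref{cor-newtons-theorem}(ii)) to write, for radial nonnegative $v$,
\[
\frac{b_0(v)}{2\omega_{N-1}^2}=\int_0^1 r^{N-1}v(r)\ln\frac{1}{r}\int_0^r\rho^{N-1}v(\rho)\,d\rho\,dr,
\]
which is manifestly nonnegative and can then be restricted to $r\in(0,1/n)$. Your device of shrinking the support to $B_\rho$ with $\rho<\tfrac12$ kills $\Phi_-$ outright and lets you bound the full double integral from below by the plateau contribution without ever invoking Newton's theorem. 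This is a cleaner and more elementary route; the price is the harmless rescaling of the Moser sequence, which does not affect $|\nabla M_n|_N$.
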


\begin{proof}
The assumption implies the existence of a constant $c_1>0$ with 
\begin{equation}
  \label{eq:estimate-C-lower}
G(s) \ge  c_1 s^\beta e^{\alpha_N |s|^{N/(N-1)}} \qquad \text{for $|s| \ge c_1$.} 
\end{equation}
For $n \in \N$, 
${n \geq 2}$, we now define 
 $u_n= m_n \in W^{1,N}_0(B_1) \cap L^\infty(B_1)$ as in \cite[p. 309]{Doo-Marcos}, namely
\[
m_n:=
\begin{cases}
	\frac{1}{{\omega^{1/N}_{N-1}}} {(\ln n)^{(N-1)/N}}  \quad  \  0 \leq |x| \leq \frac{1}{n}, \\
\frac{1}{{\omega^{1/N}_{N-1}}} \frac{\ln(\frac{1}{\abs{x}})}{(\ln n)^{1/N}}, \quad  \,\,\,\, \,\,\,\,\, \,\,\, \ \frac{1}{n} \leq |x| \leq 1.
\end{cases}
\]

As noted in \cite[p. 310]{Doo-Marcos}, we then have $|\nabla u_n|_N \le 1$ for $n$ large and thus $u_n \in \cB_1^*$. Moreover, $v_n :=G(u_n) \in L^\infty(B_1)$ and therefore 
$$
\Phi_\pm(u_n) = b_\pm(v_n,v_n) < \infty \qquad \text{for $n \in \N$.}
$$
By (\ref{eq:estimate-C-lower}) we have, for $n$ large,  
$$
v_n \:\ge\:  c_1 
\Bigl(\frac{1}{{\omega^{\frac{1}{N}}_{N-1}}} {(\ln n)^{\frac{N-1}{N}}}\Bigr)^{\beta} 
e^{\alpha_N\Bigl(\frac{1}{{\omega^{\frac{1}{N}}_{N-1}}} {(\ln n)^{\frac{N-1}{N}}}\Bigr)^{\frac{N}{N-1}}}  
$$

$$
=c_1 
\Bigl(\frac{\ln n}{{\omega^{\frac{1}{N-1}}_{N-1}}}\Bigr)^{\frac{N-1}{N}\beta} e^{\alpha_N\Bigl(\frac{\ln n}{{\omega^{\frac{1}{N-1}}_{N-1}}}\Bigr)^{\frac{N}{N-1}\frac{N-1}{N}} }
$$

$$
\:\ge\: 
 c_{2} (\ln n)^{\frac{N-1}{N}\beta} n^{N}
\qquad \text{on $B_{\frac{1}{n}}(0)$}
$$
with a constant $c_2>0$. We derive that
\begin{align*}
\frac{b_0(v_n,v_n)}{\omega_{N-1}^2}& \geq  2\int_{0}^{\frac{1}{n}} r^{N-1} v_n(r) \ln \frac{1}{r} \int_{0}^r \rho^{N-1}  v_n(\rho)d\rho dr\\
&\ge 2 \Bigl(c_{2} (\ln n)^{\frac{N-1}{N}\beta} n^{N}
\Bigr)^2 \int_{0}^{\frac{1}{n}} r^{N-1} \ln \frac{1}{r} \int_{0}^{r} \rho^{N-1} d\rho dr\\
&=\frac{1}{N} 2 c_2^2 (\ln n)^{2\frac{N-1}{N}\beta} n^{2N}
 \int_{0}^{\frac{1}{n}} r^{2N-1}\ln \frac{1}{r} dr
\\ &=\frac{2}{N} c_2^2 (\ln n)^{2\frac{N-1}{N}\beta} n^{2N} \frac{1}{2N n^{2N}} \Bigr( \ln n +\frac{1}{2N} \Bigl)
\\
& \ge \frac{1}{N^2} c_2^2 (\ln n)^{2\frac{N-1}{N}\beta+1},
\end{align*}
so that $b_0(v_n,v_n) \to \infty$ as $n \to \infty$ since $\beta >-\frac{N}{2(N-1)}$.  This shows that $\Phi(u_n) = b_0(v_n,v_n)  \to  \infty$ as $n \to \infty$, 
as required. 
\end{proof}
Next we wish to prove the existence of a maximizer $u \in \cB_1^*$ for $m_1(N, G)$. We first prove the following convergence result. 

\begin{proposition}
\label{case-1-conclusion}
Suppose that $G$ satisfies $(G_0)$ and has at most $\beta$-critical growth with $\beta<-\frac{N}{2(N-1)}$, and let $(u_n)_n$ be a sequence in $\cB_1^*$ with $u_n \weak 0$ in $W^{1,N}_0(B_1)$. Then 
\begin{equation}
  \label{eq:case-1-conclusion-eq}
\Phi(u_n) \to \Phi(0) \qquad \text{as $n \to \infty$.}
\end{equation}
\end{proposition}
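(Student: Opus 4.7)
My plan is to split $\Phi(u_n)$ via the radial reduction of Corollary~\ref{cor-newtons-theorem}(ii) into a ``tame'' piece on $\{r\in[r_0,1]\}$, where the radial monotonicity of $u_n$ gives a uniform pointwise bound and dominated convergence applies, and a ``concentration'' piece on $\{r\in(0,r_0)\}$, whose uniform smallness (as $r_0\to 0$) I would extract from the strict inequality $\beta<-\frac{N}{2(N-1)}$. The starting ingredient is $L^1$-convergence of $G(u_n)$: since $u_n\weak 0$ in $W^{1,N}_0(B_1)$, Rellich-Kondrachov gives $u_n\to 0$ in $L^q(B_1)$ for every $q<\infty$, and thus $u_n\to 0$ a.e.\ along any subsequence, so by continuity $G(u_n)\to G(0)$ a.e. Using the $\beta$-critical bound $G(t)\le c(1+|t|)^\beta e^{\alpha_N|t|^{N/(N-1)}}$ with $\beta<0$ and the Trudinger-Moser inequality~\eqref{dis}, for any measurable $E\subset B_1$ and $M>0$,
\[
\int_E G(u_n)\,dx\le G(M)|E|+c(1+M)^\beta\!\int_{B_1}\!e^{\alpha_N|u_n|^{N/(N-1)}}dx\le G(M)|E|+c'(1+M)^\beta,
\]
so $\{G(u_n)\}$ is uniformly integrable on $B_1$ and Vitali's theorem yields $G(u_n)\to G(0)$ in $L^1(B_1)$ for the full sequence.

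Since $u_n\in\cB_1^*$ is radial and decreasing, Corollary~\ref{cor-newtons-theorem}(ii) rewrites
\[
\frac{\Phi(u_n)}{2\omega_{N-1}^2}=\int_0^1 r^{N-1}G(u_n(r))\ln\tfrac{1}{r}\,H_n(r)\,dr,\qquad H_n(r):=\int_0^r\rho^{N-1}G(u_n(\rho))\,d\rho.
\]
Fix $r_0\in(0,1)$. Radial monotonicity together with $\|u_n\|_{L^1(B_1)}\to 0$ gives $u_n(r)\le u_n(r_0)\le\|u_n\|_{L^1(B_1)}/|B_{r_0}|=:M_n\to 0$, so $u_n\to 0$ uniformly on $[r_0,1]$, and hence $r^{N-1}G(u_n(r))\ln(1/r)$ is uniformly dominated on $[r_0,1]$ by an integrable function and converges pointwise to $r^{N-1}G(0)\ln(1/r)$. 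Moreover $H_n(r)=\omega_{N-1}^{-1}\int_{B_r}G(u_n)dx$ is uniformly bounded and converges pointwise to $H_0(r):=G(0)r^N/N$, thanks to the first step. Dominated convergence then yields
\[
\int_{r_0}^1 r^{N-1}G(u_n(r))\ln\tfrac{1}{r}H_n(r)\,dr\ \longrightarrow\ \int_{r_0}^1 r^{N-1}G(0)\ln\tfrac{1}{r}H_0(r)\,dr.
\]

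The main obstacle is to show the uniform smallness of the concentration piece $J_n(r_0):=\int_0^{r_0}r^{N-1}G(u_n(r))\ln(1/r)H_n(r)\,dr$ as $r_0\to 0$. I would follow the $A^\pm$-decomposition from the proof of Lemma~\ref{general-boundedness}: fix $\eps\in(0,1/\alpha_N)$ and set $A_n^+:=\{r\in(0,1]:u_n(r)\ge[\eps(-\ln r)]^{(N-1)/N}\}$, $A_n^-$ its complement. On $A_n^-$ one has the polynomial bound $G(u_n(r))\le c\,r^{-\alpha_N\eps}$, while on $A_n^+$, since $\beta<0$, $(1+u_n(r))^\beta\le C(-\ln r)^{\beta(N-1)/N}$. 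Splitting $J_n(r_0)$ into the four pieces $A_n^\pm\!\times\!A_n^\pm$, the three pieces involving an $A^-$ factor would be controlled by positive powers of $r_0$ times logarithmic corrections (as in Lemma~\ref{general-boundedness}, now gaining $r_0^{N-\alpha_N\eps}$-type smallness from the restriction $r\le r_0$), and the delicate $A_n^+\!\times\!A_n^+$ piece is bounded, after extracting the prefactor $(-\ln r)(1+[\eps(-\ln r)]^{(N-1)/N})^{2\beta}\le\eps^{2\beta(N-1)/N}(-\ln r)^{1+2\beta(N-1)/N}$, by
\[
C(\ln 1/r_0)^{1+2\beta(N-1)/N}\Bigl(\int_0^1 r^{N-1}e^{\alpha_N u_n^{N/(N-1)}(r)}\,dr\Bigr)^{\!2}\le C'(\ln 1/r_0)^{1+2\beta(N-1)/N},
\]
where I used Trudinger-Moser~\eqref{dis}; crucially, $1+2\beta(N-1)/N<0$ is precisely equivalent to the strict inequality $\beta<-\frac{N}{2(N-1)}$, so the prefactor vanishes as $r_0\to 0$. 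Given $\eta>0$, I would then first choose $r_0$ so small that $J_n(r_0)$ and its $u\equiv 0$ analogue are both $\le\eta$ for every $n$, and then $n_0$ so that the $[r_0,1]$-part differs from its $u=0$ counterpart by at most $\eta$ for $n\ge n_0$; this yields $|\Phi(u_n)-\Phi(0)|\le C\eta$, proving \eqref{eq:case-1-conclusion-eq}.
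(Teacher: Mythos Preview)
Your proof is correct and follows essentially the same route as the paper: the decisive step in both is the $A_n^\pm$ decomposition on the inner region $(0,r_0)$, where the strict inequality $1+2\beta(N-1)/N<0$ kills the $A_n^+\times A_n^+$ contribution and the remaining pieces carry a factor $r_0^{N-\alpha_N\eps}$ (up to logarithms). The only organizational difference is how the limit $\Phi(0)$ is recovered: the paper writes $G=G(0)+\tilde G$ with $\tilde G(0)=0$, expands $\Phi(u_n)=b_0(\tilde G(u_n))+2b_0(G(0)1_{B_1},\tilde G(u_n))+\Phi(0)$, and shows the first two terms tend to zero, whereas you compare $\Phi(u_n)$ to $\Phi(0)$ directly by first establishing $G(u_n)\to G(0)$ in $L^1(B_1)$ via uniform integrability and Vitali, which makes the outer piece $\int_{r_0}^1$ converge by dominated convergence without isolating a cross term.
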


\begin{proof}
Since $u_n \weak 0$ in $W^{1,N}_0(B_1)$ and $W^{1,N}_0(B_1)$ is compactly embedded into $L^p(B_1)$ for $N<p<\infty$, we have 
\begin{equation}
  \label{eq:l-p-strong-zero}
u_n \to 0 \qquad \text{in $L^p(B_1)$ for $N < p< \infty$.}
\end{equation}
Since $u_n \in \cB_1^*$ for every $n \in \N$, (\ref{eq:l-p-strong-zero}) implies that 
\begin{equation}
  \label{eq:l-p-strong-zero-locally-uniform}
u_n \to 0 \qquad \text{uniformly in $[\delta,1]$ for every $\delta  \in (0,1)$.}
\end{equation}
We now write $G= \kappa_0 + \tilde G$ with $\kappa_0=G(0)$, where the function $\tilde G = G - \kappa_0$ also satisfies $(G_0)$, $\tilde G(0)=0$ and has at most $\beta$-critical growth. Consequently,
\begin{equation}
  \label{eq:beta-critical-estimate}
\tilde G(t)\le c_1(1 + |t|)^\beta e^{\alpha_N |t|^{N/(N-1)}} \le c_1 e^{\alpha_N |t|^{N/(N-1)}} \qquad \text{for $t \in \R$ with a constant $c_1>0$.}
\end{equation}
With
$$
v_n:= 1_{B_1}\tilde G(u_n) \qquad \text{for $n \in \N$,}
$$
we then have
$$
\Phi(u_n)= b_0(v_n) + 2 
b_0 \bigl(1_{B_1}\kappa_0,v_n\bigr) + b_0(\kappa_0 1_{B_1})
= b_0(v_n) + 2 
b_0 \bigl(1_{B_1} \kappa_0,v_n\bigr) + \Phi(0).
$$
By Corollary~\ref{cor-newtons-theorem}(i), we have 
$$
b_0 \bigl(1_{B_1} \kappa_0,v_n \bigr) = \omega_{N-1}^2 \kappa_0 \int_{0}^1 r^{N-1} v_n(r) g(r)dr \qquad \quad \text{with}\quad 
g(r)= \ln \frac{1}{r}  \int_{0}^r \rho^{N-1} d\rho + \int_{r}^1 \rho^{N-1} (\ln \frac{1}{\rho}) d\rho. 
$$
Moreover, for any $\delta \in (0,1)$ we have, by (\ref{dis}) and (\ref{eq:beta-critical-estimate}), 
\begin{equation}
  \label{eq:g-L-infty-delta-est}
\Bigl| \int_{0}^\delta r^{N-1} v_n(r) g(r)dr \Bigr| \le \frac{c_1}{\omega_{N-1}} \|g\|_{L^\infty(0,\delta)}\int_{B_1}e^{\alpha_N t^{N/(N-1)}} dx \le \frac{c_1 c(B_1)}{\omega_{N-1}} \|g\|_{L^\infty(0,\delta)}.
\end{equation}
By (\ref{eq:l-p-strong-zero-locally-uniform}) and since $\tilde G(0)=0$, we also have
\begin{equation}
  \label{eq:delta-uniform-convergence}
v_n \to 0 \qquad \text{uniformly in $[\delta, 1]$ for every $\delta \in (0,1)$.}
\end{equation}
Combining (\ref{eq:g-L-infty-delta-est}), (\ref{eq:delta-uniform-convergence}) and the fact that $g(r) \to 0$ as $r \to 0$, we see that  
$$
b_0 \bigl(1_{B_1} \kappa_0,v_n \bigr) \to 0 \qquad \text{as $n \to \infty$.}
$$
To prove (\ref{eq:case-1-conclusion-eq}), it thus remains to show that 
\begin{equation}
  \label{limit}
b_0(v_n) \to 0 \qquad \text{as $n \to \infty$.}  
\end{equation}
To see (\ref{limit}), we note that for every $\delta \in (0,1)$ we have
$$
\frac{b_0(v_n)}{2\omega_{N-1}^2} = \int_{0}^1 r^{N-1} v_n(r) \ln \frac{1}{r} \int_{0}^r \rho^{N-1}  v_n(\rho)d\rho dr = M_n^\delta + N_n^\delta,
$$
where, by (\ref{dis}), \eqref{eq:beta-critical-estimate} and (\ref{eq:delta-uniform-convergence})
\begin{align}
M_n^\delta &:= \int_{\delta}^1 r^{N-1} v_n(r) \ln \frac{1}{r} \int_{0}^r \rho^{N-1}  v_n(\rho)d\rho dr \le c_1 \int_{\delta}^1 r^{N-1} v_n(r) \ln \frac{1}{r} \int_{0}^1 \rho^{N-1} e^{\alpha_N |u_n|^{N/(N-1)}(\rho)} d\rho dr \nonumber\\
&\le \frac{c_1 c(B_1)}{\omega_{N-1}} \int_{\delta}^1 r^{N-1} v_n(r) \ln \frac{1}{r} dr \to 0 \qquad \text{as $n \to \infty$.} \label{b-n-v-n-zero-est-1}
\end{align}
To estimate 
$$
N_n^\delta:= \int_{0}^\delta r^{N-1} v_n(r) \ln \frac{1}{r} \int_{0}^r \rho^{N-1}  v_n(\rho)d\rho dr 
$$
we fix $\eps \in (0,\frac{1}{\alpha_N})$ and define, for any $n \in \N$,
$$
A_n^+
= \{r \in (0,1]\::\: u_n(r) \ge  [\epsilon(-\ln r)]^{\frac{N-1}{N}} \},\qquad
A_n^-
:= \{r \in (0,1]\::\: u_n(r) < [\epsilon(-\ln r)]^{\frac{N-1}{N}} \}.
$$
By \eqref{eq:beta-critical-estimate}, we have 
\begin{equation}
\label{eq:A--estimate8}
v_n(r) \le c_1 e^{\alpha_N |u_n|^{\frac{N}{N-1}}(r)} \le  \frac{c_1}{r^{\alpha_N \eps}}
\qquad \text{for $r \in A_n^-$, and}
\end{equation}
\begin{equation}
\label{eq:A-+estimate0}
v_n(r) \le c_1 \bigl(1+ [\epsilon(-\ln r)]^{\frac{N-1}{N}}\bigr)^\beta e^{\alpha_N u_n^{\frac{N}{N-1}}(r)} \qquad \text{for $r \in A_n^+$.}
\end{equation}
We now write 
\begin{align*}
N_n^\delta= \int_{A_n^- \cap (0,\delta)} r v_n(r) \ln \frac{1}{r}
\int_{0}^r \rho^{N-1}  v_n(\rho)d\rho dr + \int_{A_n^+ \cap (0,\delta)} r^{N-1} v_n(r) \ln \frac{1}{r}\int_{0}^r \rho^{N-1}  v_n(\rho)d\rho dr,
\end{align*}
where, by (\ref{dis}) and (\ref{eq:A--estimate8}), 
\begin{align*}
&\int_{A_n^- \cap (0,\delta)} r^{N-1} v_n(r) \ln \frac{1}{r}
\int_{0}^r \rho^{N-1}  v_n(\rho)d\rho dr  \le c_1^2 \int_{0}^\delta r^{N-1- \alpha_N \eps} \ln \frac{1}{r} \int_{0}^1 \rho^{N-1} e^{\alpha_N |u_n|^{N/(N-1)}(\rho)} d\rho dr \\
&\le  \frac{c_1^2 c(B_1)}{\omega_{N-1}} \int_{0}^\delta r^{N-1-\alpha_N \eps} \ln \frac{1}{r} dr < +\infty
\label{N-delta-1-est}
\end{align*}
for all $n \in \N$. Moreover, we have 
\begin{align*}
&\int_{A_n^+ \cap (0,\delta)} r^{N-1} v_n(r) \ln \frac{1}{r}\int_{0}^r \rho^{N-1}  v_n(\rho)d\rho dr  \\  
&= \int_{A_n^+ \cap (0,\delta)} r^{N-1} v_n(r) \ln \frac{1}{r}\int_{A_n^+\cap (0,r)} \rho^{N-1}  v_n(\rho)d\rho dr +\\
&+ \int_{A_n^+ \cap (0,\delta)} r^{N-1} v_n(r) \ln \frac{1}{r}\int_{A_n^- \cap (0,r)} \rho^{N-1}  v_n(\rho)d\rho dr,
\end{align*}
where 
\begin{align*}
&\int_{A_n^+ \cap (0,\delta]} r^{N-1}v_n(r) \ln \frac{1}{r} \int_{A_n^+ \cap [0,r]} \rho^{N-1}  v_n(\rho)d\rho dr \nonumber\\
&\le c_1^2 \int_{A_n^+ \cap (0,\delta]} 
r^{N-1} \bigl(1+ 
[\eps(-\ln r)]^{(N-1)/N}\bigr)^\beta e^{\alpha_N |u_n|^{N/(N-1)}(r)}
  \ln \frac{1}{r} \times \\ 
& \times \int_{A_n^+ \cap [0,r]} \rho^{N-1} \bigl(1+ 
[\eps(-\ln \rho)]^{(N-1)/N}\bigr)^\beta e^{\alpha_N |u_n|^{N/(N-1)}(\rho)} d\rho dr \nonumber\\
&\le c_1^2 \int_{A_n^+ \cap (0,\delta]} (-\ln r)\bigl(1+ 
(\eps(-\ln r))^{(N-1)/N}\bigr)^{2\beta} r^{N-1} e^{\alpha_N |u_n|^{(N-1)/N}(r)}\int_0^r  
\rho^{N-1} e^{\alpha_N |u_n|^{(N-1)/N}(\rho)}d\rho dr \\
&\le \frac{c_1^2}{\eps}\int_{A_n^+ \cap (0,\delta]} \bigl(1+ 
(\eps(-\ln r))^{(N-1)/N}\bigr)^{\frac{N}{N-1}}\bigl(1+ 
(\eps(-\ln r))^{(N-1)/N}\bigr)^{2\beta} r^{N-1} e^{\alpha_N |u_n|^{(N-1)/N}(r)}  \\& \\&\times  \int_0^r  
\rho^{N-1} e^{\alpha_N |u_n|^{(N-1)/N}(\rho)}d\rho dr
\\&\le \frac{c_1^2}{\omega_{N-1}^2\eps} \bigl(1+ 
(-\ln \delta)^{(N-1)/N}\bigr)^{N/(N-1)+2\beta} \Bigl(\int_{0}^1
r^{N-1} e^{\alpha_N |u_n|^{N/(N-1)}(r)} dr\Bigr)^2
\\ & \le \frac{\bigl(c_1 c(B_1)\bigr)^2}{\eps\omega_{N-1}^2} (\eps(-\ln \delta))^{(N-1)/N}\bigr)^{N/(N-1)+2\beta}
\end{align*}
again by (\ref{dis}). Here we used the assumption $\beta<-\frac{N}{2(N-1)}$. Finally, we have 
\begin{align}
&\int_{A_n^+ \cap [0,\delta]}
r^{N-1} v_n(r) \ln \frac{1}{r} \int_{A_n^- \cap [0,r]} \rho^{N-1}  v_n(\rho)d\rho dr\\
& \leq 
c_1^2 \int_{A_n^+ \cap [0,\delta]} {r^{N-1} e^{\alpha_N |u_n|^{N/(N-1)}(r)}}
\ln \frac{1}{r}  \int_{0}^r 
\rho^{N-1- \alpha_N \eps} d\rho dr  \\
&\le \frac{c_1^2}{N-\alpha_N \eps} \int_{0}^\delta r^{2N-1-\alpha_N \eps} e^{\alpha_N |u_n|^{N/(N-1)}(r)} \ln \frac{1}{r} \,dr \\
&\leq c_1^2 
C_\delta \int_{0}^1 r^{N-1} e^{\alpha_N |u_n|^{N/(N-1)}(r)}\,dr \le \frac{c_1^2 
C_\delta c(B_1)}{\omega_{N-1}} 
\label{N-delta-3-est}
\end{align}
by (\ref{dis}) with $C_\delta = \sup \limits_{r \in [0,\delta]}r^{N-\alpha_N\eps}\ln \frac{1}{r}$. Since, as $\eps \in (0,\frac{1}{\alpha_N})$ and $\beta<-\frac{N}{2(N-1)}$,  we infer that
$$
\lim_{\delta \to 0}\sup_{n \in \N}N_n^\delta =0.
$$
Combining this with (\ref{b-n-v-n-zero-est-1}), we infer (\ref{limit}), as claimed.
\end{proof}

\begin{proposition}
\label{C-existence-of-maximizer}
Suppose that $G$ satisfies $(G_0)$ and has at most $\beta$-critical growth with $\beta<-\frac{N}{2(N-1)}$. Then the value 
$m_1(N, G)< \infty$ is attained by a function $u \in \cB_1^*$. 
\end{proposition}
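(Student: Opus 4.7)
The plan is to apply the direct method of the calculus of variations: along a Schwarz-symmetrized maximizing sequence I extract a weak limit $u\in\cB_1^*$, rule out $u\equiv 0$ using Proposition~\ref{case-1-conclusion} combined with Remark~\ref{m-F-greater-F-0}(i), and then prove that $\Phi$ is sequentially continuous along the remaining sequence, so that $u$ is in fact a maximizer.

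First, I would select $(u_n)_n\subset \cB_1$ with $\Phi(u_n)\to m_1(N,G)$. Because $G$ is even, continuous and strictly increasing on $[0,\infty)$, the super-level sets of $|u_n|$ and $G(|u_n|)$ coincide, so $[G(|u_n|)]^{*}=G(u_n^{*})$; combining Lemma~\ref{Riesz-rearrangement}(iii) with Lemma~\ref{symmetrized-invariance-H-1} I may replace $u_n$ by $u_n^{*}$ and assume $u_n\in\cB_1^*$. The sequence is bounded in $W^{1,N}_0(B_1)$, so along a subsequence $u_n\weak u$ in $W^{1,N}_0(B_1)$ with $u\in\cB_1^{*}$ (the set being weakly closed). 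Compactness of the embedding into $L^p(B_1)$ for $p<\infty$ plus monotonicity of the radial profile yields $u_n\to u$ a.e.~in $B_1$ and uniformly on every annulus $\{\delta\le|x|\le 1\}$.

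If $u\equiv 0$, Proposition~\ref{case-1-conclusion} forces $\Phi(u_n)\to\Phi(0)$; but picking any $w\in\cB_1\setminus\{0\}$, Remark~\ref{m-F-greater-F-0}(i) gives $m_1(N,G)\ge\Phi(w)>\Phi(0)$, a contradiction. Hence $u\not\equiv 0$, and the proof reduces to showing $\Phi(u_n)\to\Phi(u)$, which identifies $u$ as the desired maximizer.

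The main obstacle is exactly this last convergence, since $\Phi$ is a nonlocal quadratic form built on a sign-changing, singular logarithmic kernel and $G$ sits at borderline critical growth. My approach is to express $\Phi$ via the radial formula from Corollary~\ref{cor-newtons-theorem}(ii), fix some $\beta'\in\bigl(\beta,-\tfrac{N}{2(N-1)}\bigr)$---available thanks to the \emph{strict} inequality $\beta<-\tfrac{N}{2(N-1)}$---and split
\[
G(u_n(r))\;\le\; c_1\,(1+|u_n(r)|)^{\beta-\beta'}\,(1+|u_n(r)|)^{\beta'}e^{\alpha_N|u_n|^{N/(N-1)}(r)}.
\]
The second factor keeps $\Phi_{\beta',\beta'}(u_n,u_n)$ uniformly bounded by Lemma~\ref{general-boundedness} (since $2\beta'<-\tfrac{N}{N-1}$), while $(1+|u_n(r)|)^{\beta-\beta'}\le(1+M)^{\beta-\beta'}\to 0$ on the set $\{|u_n|>M\}$ uniformly in $n$ as $M\to\infty$. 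Splitting the radial double integral according to whether $|u_n(r)|$ and $|u_n(\rho)|$ exceed $M$, the four tails involving $\{|u_n|>M\}$ are uniformly small in $n$, while on the bounded set $\{|u_n(r)|,|u_n(\rho)|\le M\}$ the integrand is dominated by a fixed integrable function (the logarithmic kernel times bounded factors) and a.e.~convergence plus dominated convergence give the limit. A parallel split, together with the crude estimate~\eqref{eq:b-second-ineq}, handles $\Phi_-(u_n)\to\Phi_-(u)$. Sending $M\to\infty$ yields $\Phi(u_n)\to\Phi(u)$, hence $\Phi(u)=m_1(N,G)$. The whole compactness scheme collapses precisely at the threshold $\beta=-\tfrac{N}{2(N-1)}$, consistently with the open problem flagged after Theorem~\ref{sec:introduction-main-thm-C-F2}.
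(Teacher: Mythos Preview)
Your argument is correct. The setup (symmetrization, weak compactness, ruling out $u\equiv 0$ via Proposition~\ref{case-1-conclusion} and Remark~\ref{m-F-greater-F-0}) matches the paper exactly. The difference lies in how you pass to the limit when $u\not\equiv 0$.

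The paper invokes Lions' concentration--compactness alternative: since $u\not\equiv 0$, the sequence $e^{(\alpha_N+t)|u_n|^{N/(N-1)}}$ is bounded in $L^1(B_1)$ for some $t>0$, which upgrades $v_n=G(u_n)$ to a bounded sequence in $L^{s_0}(B_1)$ with $s_0=1+t/\alpha_N>1$. Interpolation then gives $v_n\to v$ in $L^s$ for $1\le s<s_0$, and a short H\"older argument in the radial formula (the factor $|B_r|^{1/s'}\ln(1/r)$ is bounded precisely because $s'<\infty$) finishes the job. Your route bypasses Lions entirely in this step: you manufacture compactness by hand, choosing $\beta'\in(\beta,-\tfrac{N}{2(N-1)})$, peeling off the factor $(1+|u_n|)^{\beta-\beta'}$, and using Lemma~\ref{general-boundedness} with exponents $(\beta',\beta')$ to make the tails $\{|u_n|>M\}$ uniformly small, while dominated convergence handles the truncated part. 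This is more elementary and, incidentally, does not actually use $u\not\equiv 0$---your convergence argument would reprove Proposition~\ref{case-1-conclusion} as a special case. The paper's approach, in return, yields the stronger intermediate information $v_n\to v$ in $L^s$ for some $s>1$.

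Two small remarks. First, the separate treatment of $\Phi_-$ is superfluous: the radial formula of Corollary~\ref{cor-newtons-theorem}(ii) already expresses the full $\Phi=b_0$ as a single nonnegative integral over $(0,1)$, so your four-piece split handles $\Phi$ directly. Second, the indicator $1_{\{|u_n|\le M\}}$ need not converge a.e.\ to $1_{\{|u|\le M\}}$ at points where $|u|=M$; since $u$ is radially nonincreasing this level set has positive measure for at most countably many $M$, so simply take $M\to\infty$ along values avoiding this countable set.
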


\begin{proof}
Let $(u_n)_n$ be a maximizing sequence in $\cB_1$ for $m_G$. By Lemma~\ref{Riesz-rearrangement}, we may assume that $u_n \in \cB_1^*$ for $n \in \N$. Since $\cB_1$ is bounded in $W^{1,N}_0(B_1)$, we may also assume that  
\begin{equation}
  \label{eq:b-1-weak-convergence}
u_n \weak u \in W^{1,N}_0(B_1) \qquad \text{with $u \in \cB_1^*$.}
\end{equation}
By \cite[Theorem 1.6]{Lions}, we have two possibilities. Either 
\smallskip
\begin{itemize}
\item[i)] $u=0$, or 
\item[ii)] $u \not = 0$, and $\int_{B_1}  e^{(\alpha_N +t )u_n^{N/(N-1)} } \ dx$ is bounded for some $t>0$ and thus 
  \begin{equation}
    \label{eq:ii-l-1-convergence}
e^{\alpha_N u_n^{N/(N-1)}} \to e^{\alpha_N u^{N/(N-1)} }  \quad \hbox{in}  \ \ L^1(B_1).
  \end{equation}
\end{itemize}
Assume first that $i)$ holds. In this case Proposition~\ref{case-1-conclusion} implies that 
\begin{equation*}
m_1(N, G) = \lim_{n \to \infty} \Phi(u_n) = \Phi(0), 
\end{equation*}
which is impossible by Remark~\ref{m-F-greater-F-0}. So this case does not occur.\\
It remains to consider the case where $(u_n)_n$ satisfies $ii)$. 
Set $v_n:= 1_{B_1} G(u_n)$ for $n \in \N$ and $v:= 1_{B_1}G(u)$. By $ii)$,  $\int_{B_1}  e^{(\alpha_N +t ) u_n^{N/(N-1)}}\ dx$ is bounded for some $t>0$ and thus  $v_n$ is bounded in  $L^{s_0}(\R^N)$ with $s_0=1 + \frac{t}{\alpha_N}>1$. Moreover, since 
$$
v_n \to v\qquad \text{in $L^1(\R^N)$,}
$$
interpolation yields that 
$$
v_n \to v  \qquad \text{in $L^s(\R^N)\qquad$ for $1 \le s < s_0$.}
$$
Moreover,  
\begin{align*}
&\frac{\Phi(u_n)-\Phi(u)}{2\omega_{N-1}^2}= \int_{0}^1 r^{N-1} v_n(r) \ln \frac{1}{r} \int_{0}^r \rho^{N-1}  v_n(r) d\rho dr - \int_{0}^1 r^{N-1} v(r) \ln \frac{1}{r} \int_{0}^r \rho^{N-1}  v(\rho) d\rho dr\\
&= \int_{0}^1 r^{N-1} v_n(r) \ln \frac{1}{r} \int_{0}^r \rho^{N-1}  [v_n(\rho)-v(\rho)] d\rho dr + \int_{0}^1 r^{N-1} [v_n(r)-v(r)] \ln \frac{1}{r} \int_{0}^r \rho^{N-1}  v(\rho) d\rho dr
\end{align*}
where, for fixed $s \in (1,s_0)$,  
\begin{align*}
&\Bigl|\int_{0}^1 r^{N-1} v_n(r) \ln \frac{1}{r} \int_{0}^r \rho^{N-1}  [v_n(\rho)-v(\rho)] d\rho dr\Bigr|\le \frac{|v_n-v|_{s}}{\omega_{N-1}} \int_{0}^1 r^{N-1} |B_r|^{\frac{1}{s'}}  v_n(r) \ln \frac{1}{r}  dr  \\
&=\frac{|v_n-v|_{s}}{\omega_{N-1}} \int_{0}^1 r^{N-1} \bigg|\frac{r^N \pi^{\frac{N}{2}}}{\Gamma(\frac{N}{2}+1 )}\bigg|^{\frac{1}{s'}}  v_n(r) \ln \frac{1}{r}  dr\\
&\le \frac{|v_n-v|_{s} \pi^{\frac{N}{2s'}}}{\Gamma(\frac{N}{2}+1)\omega_{N-1}} 
\int_{0}^1 r^{N-1+\frac{N}{s'}} v_n(r) \ln \frac{1}{r} dr \le C |v_n-v|_{s} |v_n|_{1}  \to 0 \qquad \text{as $n \to \infty$ } 
\end{align*}
with $C:= \frac{\pi^{{\frac{N}{2s'}}}}{\Gamma(N/2+1)\omega_{N-1}^2}\sup \limits_{r \in (0,1]} r^{\frac{N}{s'}} \ln \frac{1}{r}$  and also 
\begin{align*}
&\Bigl|\int_{0}^1 r^{N-1} [v_n(r)-v(r)] \ln \frac{1}{r} \int_{0}^r \rho^{N-1}  v(\rho) d\rho dr\Bigr| \le \frac{|v|_{s}}{\omega_{N-1}} \int_{0}^1 r^{N-1} |B_r|^{\frac{1}{s'}}  [v_n(r)-v(r)] \ln \frac{1}{r}  dr  \\
&\le \frac{|v|_{s}\pi^{\frac{N}{2s'}}}{\Gamma(\frac{N}{2}+1)\omega_{N-1}} 
\int_{0}^1 r^{N-1+\frac{N}{2s'}} [v_n(r)-v(r)] \ln \frac{1}{r} dr \le C |v|_{s} |v_n-v|_{1} \to 0 \qquad \text{as $n \to \infty$.} 
\end{align*}
We thus conclude that 
\begin{equation*}
m_1(N, G) = \lim_{n \to \infty} \Phi(u_n) = \Phi(u), 
\end{equation*}
so $m_1(N,G)$ is attained at $u \in \cB_1^*$.
\end{proof}

The proof of Theorem~\ref{sec:introduction-main-thm-C-F2} is now completed by the following lemma.

\begin{lemma}
\label{maximizer-strictly-positive}
Let $G$ satisfy $(G_0)$, and let $u \in \cB_1$ be a maximizer for $\Phi \big|_{\cB_1}$. Then, up to a change of sign, we have $u \in \cB_1^*$, and 
$u$ is strictly positive in $B_1$.  
\end{lemma}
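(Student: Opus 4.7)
The plan is to proceed in three stages: symmetrize $u$ via the Riesz rearrangement equality case, pin down the sign using Sobolev regularity, and finally rule out premature vanishing of $u^*$ inside $B_1$ by a perturbation argument.

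First, since $G$ is even, $\Phi(u)=\Phi(|u|)$ and $|u|$ is also a maximizer. Set $v:=1_{B_1}G(|u|)\in\cM_+(\R^N)$; by Corollary~\ref{corollary-nonuniform-finiteness}(i) we have $b_\pm(v)<\infty$, and by Lemma~\ref{lemma-nonuniform-finiteness-convolution}(i), $v\in L^2(\R^N)$. By Lemma~\ref{symmetrized-invariance-H-1}, $u^*:=|u|^*\in\cB_1^*\subset\cB_1$, and since $G$ is nondecreasing on $[0,\infty)$, we have $v^*=1_{B_1}G(u^*)$. Lemma~\ref{Riesz-rearrangement}(iii) gives
\[
\Phi(u^*)=b_0(v^*)\ge b_0(v)=\Phi(|u|)=m_1(N,G)\ge\Phi(u^*),
\]
so equality holds throughout. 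The equality statement of Lemma~\ref{Riesz-rearrangement}(iii) then yields $v=v^*(\cdot-x_0)$ for some $x_0\in\R^N$; since $v,v^*$ are supported in $\overline{B_1}$, we must have $x_0=0$, hence $v=v^*$. Invoking the strict monotonicity of $G$ on $[0,\infty)$ built into $(G_0)$, this forces $|u|=u^*$ a.e.

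Second, I upgrade to $u=\pm u^*$. With $u^+:=\max(u,0)$ and $u^-:=\max(-u,0)$, both lie in $W^{1,N}_0(B_1)$ with disjoint supports and $|u|=u^++u^-$. On the open ball $\{u^*>0\}=B_{r_0}$ (with $r_0\in(0,1]$), the function $u^+$ takes the value $u^*$ on a measurable subset $E$ and the value $0$ on $B_{r_0}\setminus E$. If both $E$ and $B_{r_0}\setminus E$ had positive measure, $u^+$ would exhibit a jump of size $u^*(|x|)>0$ across the codimension-one interface $\partial E\cap B_{r_0}$, which is incompatible with $u^+\in W^{1,N}$ (such a jump contributes a surface measure to the distributional gradient, not an $L^N$ function). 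Hence $u$ has constant sign on $B_{r_0}$; replacing $u$ by $-u$ if necessary, we conclude $u=u^*\in\cB_1^*$.

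Third, for strict positivity I argue by contradiction: assume $u^*$ is supported in $\overline{B_{r_0}}$ for some $r_0<1$. Pick $\phi\in C_c^\infty(A_\eps)$ with $\phi\ge 0$, $\phi\not\equiv 0$, where $A_\eps:=\{r_0<|y|<r_0+\eps\}$, set $\alpha_\delta:=(1+\delta^N|\nabla\phi|_N^N)^{1/N}$ and $w_\delta:=(u^*+\delta\phi)/\alpha_\delta$. Since $\nabla u^*$ and $\nabla\phi$ have disjoint supports, $|\nabla w_\delta|_N\le 1$, so $w_\delta\in\cB_1$. Writing $h_\delta:=1_{A_\eps}(G(\delta\phi/\alpha_\delta)-G(0))\ge 0$ and $C(y):=[\ln(1/|\cdot|)*(1_{B_1}G(u^*))](y)$, one gets
\[
\Phi(w_\delta)-\Phi(u^*)=2\int_{A_\eps}h_\delta(y)\,C(y)\,dy+b_0(h_\delta)+o(|h_\delta|_1)\qquad\text{as }\delta\to 0,
\]
the $o$-term absorbing both the normalization $\alpha_\delta=1+O(\delta^N)$ and the induced change of $G(u^*/\alpha_\delta)$ on $\overline{B_{r_0}}$. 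By Lemma~\ref{sec:newtons-theorem},
\[
C(y)=\ln\tfrac{1}{|y|}\!\int_{B_{|y|}}\!G(u^*)\,d\xi+\!\int_{B_1\setminus B_{|y|}}\!\ln\tfrac{1}{|\xi|}G(u^*)\,d\xi\ \longrightarrow\ \ln\tfrac{1}{r_0}\!\int_{B_{r_0}}\!G(u^*)\,d\xi>0
\]
as $|y|\to r_0^+$, so $C\ge c_0>0$ on $A_\eps$ for $\eps$ small. Using $|h_\delta|_1\to 0$ as $\delta\to 0$ (by continuity of $G$ at $0$ and dominated convergence) together with $|b_0(h_\delta)|\le(2\ln 2)|h_\delta|_1^2$ from~\eqref{eq:b-second-ineq}, we obtain $\Phi(w_\delta)-\Phi(u^*)\ge|h_\delta|_1(2c_0+o(1))>0$ for $\delta$ small, contradicting maximality. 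The main obstacle is exactly this third step: justifying the expansion and carefully balancing the positive linear term against the self-interaction $b_0(h_\delta)$ and the normalization error, with the positivity of $C$ near $\partial B_{r_0}$—made transparent by the Newton-type formula of Lemma~\ref{sec:newtons-theorem}—as the key geometric input.
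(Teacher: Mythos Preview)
Your Stage~3 contains a genuine gap: the claim that the normalization error is $o(|h_\delta|_1)$ is not justified and can fail under $(G_0)$ alone. Writing $e_\delta:=1_{B_{r_0}}\bigl[G(u^*/\alpha_\delta)-G(u^*)\bigr]\le 0$, the correct expansion reads
\[
\Phi(w_\delta)-\Phi(u^*)=2\!\int_{A_\eps}\!h_\delta\,C\,dy\;+\;2\!\int_{B_{r_0}}\!e_\delta\,C\,dy\;+\;b_0(h_\delta+e_\delta),
\]
and the second integral is \emph{negative} (since $C>0$ on $B_{r_0}$) of order $\alpha_\delta-1\sim\delta^N$. On the other hand $|h_\delta|_1$ is governed entirely by the modulus of continuity of $G$ at $0$; since $G$ is even, nothing in $(G_0)$ rules out $G(t)-G(0)\sim|t|^{2N}$ near $0$. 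In that regime $|h_\delta|_1\sim\delta^{2N}$, the negative term $\sim-\delta^N$ dominates, and $\Phi(w_\delta)<\Phi(u^*)$ for small $\delta$---no contradiction results. The difficulty is structural: an additive bump forces a rescaling of $u^*$ whose cost is pinned at order $\delta^N$, while the gain depends on how flat $G$ is at $0$.

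The paper sidesteps this by exploiting the scale invariance of the $N$-Dirichlet energy. If $u^*\equiv 0$ on $[\tau,1)$ with $\tau<1$, set $\tilde u(x):=u^*(\tau x)$; then $|\nabla\tilde u|_N=|\nabla u^*|_N$ \emph{exactly}, so $\tilde u\in\cB_1^*$ with no normalization, while $G(\tilde u(r))=G(u^*(\tau r))\ge G(u^*(r))$ for all $r\in(0,1)$ with strict inequality on $[\tau,1)$. Corollary~\ref{cor-newtons-theorem}(ii) then gives $\Phi(\tilde u)>\Phi(u^*)$ in one line.

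A secondary point: your conclusion $x_0=0$ in Stage~1 is premature. If $G(0)=0$ and $u^*$ is supported in $\overline{B_{r_0}}$ with $r_0<1$, then $v^*$ is supported in $\overline{B_{r_0}}$ and any $|x_0|\le 1-r_0$ is consistent with $\operatorname{supp} v\subset\overline{B_1}$. The paper's order---prove strict positivity of the symmetrized maximizer first, then use it to force the translation to be trivial---closes this; your Stages~1 and~3 should be reordered accordingly.
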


\begin{proof}
We already know that $u \not \equiv 0$. We first assume that $u \in \cB_1^*$, and we suppose by contradiction that there exists $\tau \in (0,1)$ with $u(r)>0$ for $r \in (0,\tau)$ and $u(r)= 0$ for $r \in [\tau,1)$. Let then $\tilde u \in W^{1,N}_0(B_1)$ be defined by $\tilde u(r)= u(\tau r)$. Then we have $|\nabla \tilde u|_N = |\nabla u|_N$ and therefore $\tilde u \in \cB_1^*$. Moreover, with $v:= G(u)$ we have 
$$
\Phi(\tilde u) = 2\omega_{N-1}^2 \int_0^1 r^{N-1} v(\tau r) \ln \frac{1}{r}\int_0^r \rho^{N-1} v (\tau \rho) d\rho dr, 
$$
where 
$$
v(\tau r) \ge v(r)\quad \text{for $r \in (0,\tau)$}\qquad \text{and}\qquad v(\tau r)= G(u(\tau r)) >G(0)=v(r)\quad \text{for $r \in [\tau,1)$,}
$$
since $G$ is strictly increasing on $[0,\infty)$ by assumption $(G_0)$. It thus follows that 
$$
\Phi(\tilde u) > 2\omega_{N-1}^2 \int_0^1 r^{N-1} v(r) \ln \frac{1}{r}\int_0^r \rho^{N-1} v (\rho) d\rho dr = \Phi(u), 
$$
contrary to the maximizing property of $u$. We thus conclude that $u$ is strictly positive in $B_1$.\\
Next, we let $u \in \cB_1$ be a general maximizer of $\Phi \big|_{\cB_1}$. By Lemma~\ref{Riesz-rearrangement}(iii), it then follows that $u^* \in \cB_1^*$ is also a maximizer of $\Phi \big|_{\cB_1}$. Moreover, since $G(u) \in L^2(\R^N)$ by Lemma~\ref{lemma-nonuniform-finiteness-convolution} and $b_\pm(G(u),G(u))<\infty$, it also follows from Lemma~\ref{Riesz-rearrangement}(iii) that $G(u)$ equals $G(u)^*$ up to translation. Since $G$ is even and strictly increasing on $[0,\infty)$ by assumption $(G_0)$, this implies that $u$ equals $u^*$ up to sign and translation. Since $u \in W^{1,N}_0(B_1)$ 
and, as we have proved above, $u^*>0$ in $B_1$, it follows that $u \equiv u^* \in \cB_1^*$ or $-u \equiv u^* \in \cB_1$. The proof is thus finished.
\end{proof}
$\bigskip$

\section{Maximization problem on the entire space $\R^N$}
\label{sec:maxim-probl-entire}
In this section we complete the proof of Theorem~\ref{sec:introduction-main-thm}. We recall that 
$\cB_{\infty}:= \{u \in W^{1,N}(\R^N)\::\: \|u\|_N\le 1\}$, where $\|u\|_N^N:= |\nabla u|_N^N + |u|_N^N$. We let 
$$
\cB_\infty^*:=  \{u^* \::\: u \in \cB_{\infty}\}
$$
denote the corresponding Schwarz symmetrized set. By Lemma~\ref{symmetrized-invariance-H-1}, $\cB_\infty^*$ is a subset of $\cB_\infty$.

Throughout this section, we assume that $G: \R \to \R$ satisfies assumption $(G_1)$.
\begin{lemma}
\label{sec:maxim-probl-entire-1}
Suppose that $G$ has at most $\beta$-critical growth for some $\beta \le -\frac{N}{2(N-1)}$. Then 
$$
m_\infty^+(N, G) = \sup_{\cB_\infty}\Psi_+ <\infty.
$$
\end{lemma}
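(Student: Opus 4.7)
The plan is to reduce the problem to the symmetrized radial class $\cB_\infty^*$ via the Riesz rearrangement inequality and then to bound $\Psi_+(u)$ uniformly on $\cB_\infty^*$ via a convolution estimate, leveraging the $L^2$-integrability of $G(u)$ provided by Lemma~\ref{lemma-nonuniform-finiteness-convolution}(ii).

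First, since $G$ is even and increasing on $[0,\infty)$, we have $(G(u))^* = G(u^*)$, and Lemma~\ref{Riesz-rearrangement}(i) gives
\begin{equation*}
\Psi_+(u) = b_+\bigl(G(u), G(u)\bigr) \leq b_+\bigl(G(u^*), G(u^*)\bigr) = \Psi_+(u^*).
\end{equation*}
Since Lemma~\ref{symmetrized-invariance-H-1} ensures $u^* \in \cB_\infty^* \subset \cB_\infty$, the task is reduced to proving
$\sup_{u \in \cB_\infty^*} \Psi_+(u) < \infty$.

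Second, for $u \in \cB_\infty^*$ set $v := G(u)$ and $K := \ln^+\!\frac{1}{|\cdot|}$. Since $K \in L^1(\R^N)$ with $\|K\|_1 = \omega_{N-1}/N^2$, Cauchy--Schwarz combined with Young's convolution inequality $\|K\ast v\|_2 \leq \|K\|_1 \|v\|_2$ yields
\begin{equation*}
\Psi_+(u) = \int_{\R^N} v\,(K\ast v)\,dx \leq \|v\|_2\,\|K\ast v\|_2 \leq \|K\|_1 \|v\|_2^2.
\end{equation*}
Lemma~\ref{lemma-nonuniform-finiteness-convolution}(ii), applied with $g = G$, ensures that $v \in L^2(\R^N)$ for each $u$, so the whole problem reduces to obtaining a uniform bound on $\|G(u)\|_2$ as $u$ varies in $\cB_\infty^*$.

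Third, I would establish the uniform $L^2$-bound by splitting $\R^N = B_{R_0} \cup (\R^N \setminus B_{R_0})$ with $R_0$ chosen (uniformly in $u$) so that $|u(x)| \leq t_0$ for $|x| \geq R_0$ and every $u \in \cB_\infty^*$; such $R_0$ exists by the elementary radial pointwise bound $|u(x)| \leq (N/\omega_{N-1})^{1/N}|x|^{-1}$ valid for every radial decreasing function with $\|u\|_N \leq 1$. On the ball $B_{R_0}$, the at-most $\beta$-critical growth with $\beta \leq -N/(2(N-1))$ yields $G(u)^2 \leq c^2(1+|u|)^{2\beta} e^{2\alpha_N|u|^{N/(N-1)}}$ with $2\beta \leq -N/(N-1)$, and a uniform bound follows via a Moser--Trudinger estimate in the spirit of Lemma~\ref{general-boundedness}, the power weight $(1+|u|)^{2\beta}$ compensating the doubled exponent. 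On the complement, the condition $G(t) = O(|t|)$ from $(G_1)$ gives $G(u) \leq C|u|$, and a uniform bound follows from the combined $L^N$-norm and $L^N$-gradient control on $u$ together with the radial structure.

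The main obstacle is the uniform far-field estimate in dimension $N \geq 3$: the crude pointwise bound $|u(x)| \leq C|x|^{-1}$ does not by itself imply an integrable $L^2$-tail for $u$ on $\R^N \setminus B_{R_0}$, so one must exploit the full $W^{1,N}$-norm control together with the radial decreasingness in a more refined way. This is exactly the point referred to in the introduction as requiring "new delicate estimates" to pass from the planar case of \cite{CiWe2} (where $W^{1,2}\hookrightarrow L^2$ makes the argument immediate) to general $N$.
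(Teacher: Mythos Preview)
Your reduction to the radial class $\cB_\infty^*$ via rearrangement is correct, and so is the Young--Cauchy--Schwarz bound $\Psi_+(u)\le \|K\|_1\|G(u)\|_2^2$. However, the whole strategy collapses at Step~3: the quantity $\|G(u)\|_2$ is \emph{not} uniformly bounded on $\cB_\infty^*$, even when restricted to $B_{R_0}$. To see this, take the Moser sequence $m_n$ from the proof of Proposition~\ref{C-infiniteness}. Then $m_n\in\cB_1^*$ and, after a harmless renormalisation, $m_n/\|m_n\|_N\in\cB_\infty^*$. On $B_{1/n}$ we have $m_n\equiv\omega_{N-1}^{-1/N}(\ln n)^{(N-1)/N}$, hence $e^{2\alpha_N m_n^{N/(N-1)}}=n^{2N}$ while $(1+m_n)^{2\beta}\sim(\ln n)^{2\beta(N-1)/N}$; integrating over $B_{1/n}$ gives a contribution of order $n^{N}(\ln n)^{2\beta(N-1)/N}\to\infty$. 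The weight $(1+|u|)^{2\beta}$ with $2\beta\le -N/(N-1)$ is far too weak to compensate for the \emph{doubled} exponent $2\alpha_N$, which lies outside the reach of the Trudinger--Moser inequality.

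Lemma~\ref{general-boundedness} is not a uniform $L^2$ bound on $G(u)$: it bounds the double integral $\int_0^1 r^{N-1}v(r)\ln\frac{1}{r}\int_0^r\rho^{N-1}v(\rho)\,d\rho\,dr$, in which each factor $v=(1+|u|)^\beta e^{\alpha_N|u|^{N/(N-1)}}$ carries only the critical exponent $\alpha_N$ and the logarithmic kernel is essential. This is precisely why the paper does not pass through $\|G(u)\|_2^2$ on the inner ball. Instead it splits $G(u)=v_1+v_2$ with $v_2=G(u)1_{B_1}$, builds an auxiliary function $U\in\cB_1^*$ (a truncation of $u$ at height $u(1)$, suitably rescaled) satisfying $v_2\le C\,\widetilde G(U)$, and then bounds $b_+(v_2)$ directly by $C\,m_1^+(N,\widetilde G)<\infty$ via Proposition~\ref{C-finiteness}. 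An $L^p$ route is used only for the cross term $b_+(v_1,v_2)$ and for $b_+(v_1)$, where $v_1$ lives on $\R^N\setminus B_1$ and is controlled linearly by $|u|$. Your far-field concern for $N\ge 3$ is legitimate, but the near-field $L^2$ argument is the part that actually fails.
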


\begin{proof}
By assumption, there exists a constant $c_1>0$ with 
	\begin{equation}
	\label{eq:estimate-ball-upper}
	G(s) \le \widetilde G(s):= c_1 \frac{e^{\alpha _N |s|^{\frac{N}{N-1}}}}{(1+ \abs{s}^{\frac{N}{N-1}})^{\frac{N}{2(N-1)}}} \qquad \text{for {$s \in \R$}.} 
	\end{equation}
	We note that $\widetilde G$ satisfies assumption $(G_0)$.
        Moreover, we note that 
	\begin{equation}
	\label{eq:C-s-ineq}
	\widetilde G((t+ s)^{\frac{(N-1)}{N}})=  c_1 \frac{e^{\alpha _N(t+s)}}{(1+t+s)^{\frac{N}{2(N-1)}}}\le  c_1
	e^{\alpha _N s}\frac{e^{\alpha _N t}}{(1+t)^{\frac{N}{2(N-1)}}}= e^{\alpha _N s} \widetilde G((t)^{\frac{(N-1)}{N}})\quad \text{for $t,s \ge 0$.}
	\end{equation}
Since $\Psi_+(u^*) \ge \Psi_+(u)$ for every $u \in W^{1,N}(\R^N)$ by Lemma~\ref{Riesz-rearrangement}(i), it now suffices to consider functions $u \in \cB_\infty^*$.  We first note, by the Radial Lemma \cite[Lemma A.IV]{BL} (see also \cite{cao}), we have 
\begin{equation}
	\label{eq:radial-lemma-inequalities}
	u(r) \le  \frac{C_N}{r}|u|_N\le \frac{C_N}{r}
\end{equation}
where $C_N:=\big( \frac{N}{\omega_{N-1}}\big)^{1/N}$.
Let $v= G(u)$. We write 
$$
v= G(u)= G(u 1_{\R^N \setminus B_{1}}) + G(u 1_{B_{1}})= : v_1 + v_2. 
$$
Since $G$ is nonnegative and $G(t)=O(|t|)$ as $t \to 0$, it follows that 
$$
0 \le v(r) \le c_2 |u(r)|
\qquad \text{for $r \ge 1$}
$$
with a constant $c_2>0$ and therefore 
\begin{equation}
  \label{eq:sigma-est}
|v_1|_N \le c_2|u|_N \le c_2\|u\|_N =: c_3. 
\end{equation}

To estimate $v_2$, we now consider the radial function  
$$U= [u - u(1)]_+ \in  W^{1,N}(B_1)$$

As  in \cite[Lemma 1]{Doo-Marcos}, since the function
$h(t) :]0, \infty [ \to \R$  given by 
$h(t)= [(t+1)^{\frac{N}{N-1}} - t^{\frac{N}{N-1}} -1 ] / t^{\frac{1}{N-1}}$ is bounded, 
there esists $A(N)>0$ such that
\begin{equation}\label{tom}
	|u|^{\frac{N}{N-1}} = |U + u(1)|^{\frac{N}{N-1}} \leq U^{\frac{N}{N-1}} +
	u^{\frac{N}{N-1}}(1)  +  A \, U^{\frac{1}{N-1}}\, u(1).
\end{equation}

Again as in  \cite[Lemma 1]{Doo-Marcos}, we deduce that  for any $\sigma >0$ 
\begin{equation}\label{tom}
	 U^{\frac{1}{N-1}} \, u(1) =
	[U^{\frac{N}{N-1}}]^{\frac{1}{N}} \, 
	[u^{\frac{N}{N-1}}(1)]^{\frac{N-1}{N}}   \leq 
	\frac{\sigma}{A}
	U^{\frac{N}{N-1}}
	+
	(\frac{\sigma}{A})^{\frac{1}{(1-N)}}
	u^{\frac{N}{N-1}}(1)
\end{equation}

Therefore from (\ref{tom}) we have 
\begin{align}\label{keyab}
|u|^{\frac{N}{N-1}} & \leq U^{\frac{N}{N-1}} +
	u^{\frac{N}{N-1}}(1)   
	+  
	\sigma \, U^{\frac{N}{N-1}}
	+
	A (\frac{\sigma}{A})^{\frac{1}{1-N}}
	u^{\frac{N}{N-1}}(1)  \nonumber \\ &=
	(1 + \sigma) U^{\frac{N}{N-1}} +
\bigl(1 +A (\frac{\sigma}{A})^{\frac{1}{(1-N)}}\bigr)
	u^{\frac{N}{N-1}}(1).
\end{align}
Now choose 
$
\sigma =  \bigl(1 + \frac{u^{N}(1)}{C_N^{N}}\bigr)^\frac{1}{N-1} -1
$ 
and  $W= ( 1 + \frac{u^{N}(1)}{C_N^{N}})^{\frac{1}{N}}U \in W^{1,N}_0(B_1)$. By the Radial Lemma
$$
\int_{B_1}|\nabla W|^N dx = ( 1 + \frac{u^{N}(1)}{C_N^{N}})
 \int_{B_1}|\nabla u|^N  \leq  
 (1+ \frac{C_N^N |u|_N^N}{C_N^N}) 
 \int_{B_1}|\nabla u|^N dx \leq |\nabla u|^N_N + |u|^N_N \leq 1
 $$
and thus  $U \in \mathcal{B}^*_1$, which implies in particular that
\begin{equation}
	\label{eq:l-1-norm-v-2-finite}
	|v_2|_1 \le \tilde c_1 \int_{B_1}e^{\alpha_N W^{N/(N-1)}}\,dx \le \tilde c_1 c(B_1)=:c_4 
\end{equation}
by the Trudinger-Moser inequality (\ref{dis}).

Consequently, by (\ref{eq:C-s-ineq}) and 
(\ref{keyab})
we have 
$$
v_2 \le  \tilde G((u^{N/(N-1)})^{\frac{N-1}{N}}) \le c_1 e^{\alpha_N
	[1 +	A(\frac{\sigma}{A})^{\frac{1}{(1-N)}}]u^{\frac{N}{N-1}}(1)} \,
	 \widetilde G(W) \qquad \text{in $B_{1}$}.
$$

We notice that the function $f(t): ]0, C_N ] \to \R$  defined by $f(t):= \Big(1+ \frac{A^{\frac{N}{N-1}}}{\Big(\Big(1+ \frac{t^N}{C_N^N}\Big)^{\frac{1}{N-1}}-1 \Big)^{\frac{1}{N-1}}} \Big) t^{\frac{N}{N-1}}$
	is uniformly bounded. Indeed $   \lim\limits_{t \to 0^+} f(t) = (N-1)^{\frac{1}{N-1}} (C_N A)^{\frac{N}{N-1}}.$
Then there exists $K>0$, depending on $N$ and $A$ (independent of $u$) such that\begin{equation}\label{otto}
    f(u(1))\leq K.
\end{equation}
We now have 
\begin{equation}
  \label{eq:Phi-plus-splitting}
\Psi_+(u)= b_+(v)= b_+(v_1) + b_+(v_2)+ 2 b_+(v_1,v_2)
\end{equation}
where	
\begin{equation*}
    b_+(v_2)=\int_{\R^N}\int_{\R^N}\ln^+\frac{1}{|x-y|}v_2(x)v_2(y)dxdy\le c_1^2
     e^{2\alpha_N [1 +	A(\frac{A}{\sigma})^{\frac{1}{(N-1)}}]u^{\frac{N}{N-1}}(1)} \ b_+(\widetilde G(W),\widetilde G(W))
\end{equation*}
$$
\le c_1^2 e^{2\alpha_N K}
	m_1^+(\widetilde G)= :c_5 <\infty
$$
by Proposition~\ref{C-finiteness} and 
(\ref{otto}).

 Moreover, by (ii) of Lemma \ref{lemma-nonuniform-finiteness-convolution}, Young's inequality, (\ref{eq:sigma-est}), (\ref{eq:l-1-norm-v-2-finite}), (\ref{otto}),
  we have 
$$
b_+(v_1,v_2)=\int_{\R^N}\int_{\R^N}\ln^+\frac{1}{|x-y|}v_1(x)v_2(y)dxdy \le |v_1|_2 
\Bigl|\ln^+ \frac{1}{|\cdot|}\Bigr|_{2} |v_2|_1 \le c_3 \Bigl|\ln^+ \frac{1}{|\cdot|}\Bigr|_{2}  c_4 =:c_6
$$
Moreover, again by (ii) of Lemma \ref{lemma-nonuniform-finiteness-convolution} and Young's inequality,
$$
b_+(v_1) \le |v_1|_{2}^2 \Bigl|\ln^+ \frac{1}{|\cdot|}\Bigr|_1 \le c_3^2 \Bigl|\ln^+ \frac{1}{|\cdot|}\Bigr|_1  = :c_7.
$$
Inserting these estimates in (\ref{eq:Phi-plus-splitting}), we deduce that $\Psi_+(u) \le c_5 + c_7 + 2c_6<\infty$. Thus the claim is proved.
\end{proof}
\begin{proposition}
	\label{existence-maximizer}
	Suppose that $G$ satisfies $(G_1)$ and that $G$ has at most $\beta$-critical growth for some $\beta<-\frac{N}{2(N-1)}$. Then the value 
$m_\infty(G)< \infty$ is attained by a function $u \in \cB_\infty^*$. 
\end{proposition}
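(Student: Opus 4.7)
The strategy is to imitate the argument of Proposition~\ref{C-existence-of-maximizer}, replacing the compact embedding $W^{1,N}_0(B_1) \embed L^p(B_1)$ by the combination of the local compact embedding of $W^{1,N}(\R^N)$ and the Radial Lemma \eqref{eq:radial-lemma-inequalities}, which provides the decay needed to control the exterior region. Let $(u_n)_n \subset \cB_\infty$ be a maximizing sequence for $m_\infty(N,G)$. By Lemma~\ref{Riesz-rearrangement} and Lemma~\ref{symmetrized-invariance-H-1} we may assume $u_n \in \cB_\infty^*$, and since $\cB_\infty^*$ is bounded in $W^{1,N}(\R^N)$ we extract a subsequence with $u_n \weak u$ in $W^{1,N}(\R^N)$ for some $u \in \cB_\infty^*$, with pointwise a.e.\ convergence on $\R^N$.

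The first step is to invoke the Lions-type dichotomy \cite[Theorem 1.6]{Lions}, exactly as in Proposition~\ref{C-existence-of-maximizer}, applied to the restriction of $u_n$ to a suitable ball or to $U_n$ built from $u_n$ as in Lemma~\ref{sec:maxim-probl-entire-1}. Either the weak limit $u$ is zero, or there exists $t>0$ with $\sup_n \int_{B_R} e^{(\alpha_N+t)|u_n|^{N/(N-1)}}dx <\infty$ for every $R>0$, which combined with the upper bound \eqref{eq:estimate-ball-upper} gives that $v_n := G(u_n)$ is bounded in $L^{s_0}_{loc}(\R^N)$ for some $s_0>1$. I would first rule out the vanishing case $u\equiv 0$: adapting Proposition~\ref{case-1-conclusion} to the whole space via the decomposition $v_n = 1_{B_1}G(u_n) + 1_{\R^N\setminus B_1}G(u_n)$ (the second piece is bounded in $L^2$ by \eqref{eq:sigma-est}, and on $B_1$ one argues as in the ball case), yields $\Psi(u_n) \to \Psi(0) = 0$, contradicting $m_\infty(N,G) > 0$ from Remark~\ref{m-F-greater-F-0}(ii).

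Hence the non-vanishing alternative holds, $u \not \equiv 0$. Since $v_n \to v := G(u)$ a.e.\ and $v_n \to v$ in $L^1_{loc}$ by Vitali's convergence theorem (using the $L^{s_0}_{loc}$-bound for uniform integrability), interpolation gives $v_n \to v$ in $L^s_{loc}(\R^N)$ for every $s \in [1,s_0)$. Moreover, since $G(t) = O(|t|)$, the exterior parts satisfy $1_{\R^N \setminus B_R} v_n \to 1_{\R^N \setminus B_R} v$ in $L^2(\R^N)$ as $n \to \infty$, uniformly for $R\ge 1$, thanks to the decay \eqref{eq:radial-lemma-inequalities} combined with the $L^N$-bound on $u_n$. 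To pass to the limit in $\Psi(u_n)= b_0(v_n)$, I split
$$
b_0(v_n) - b_0(v) = b_0(v_n - v, v_n) + b_0(v, v_n - v),
$$
and I further split each bilinear term into the interaction with $1_{B_R}$ and $1_{\R^N\setminus B_R}$. The positive part $b_+$ on $\R^N\setminus B_R$ is controlled by Young's inequality applied to $\ln^+ \frac{1}{|\cdot|} \in L^q(\R^N)$ for every $q \in [1,\infty)$ together with the $L^2$-smallness of $v_n$ on the exterior; the negative part $b_-$ on the exterior is controlled using \eqref{2.5eq} and the fact that the $|\cdot|_\ast$-norm of $1_{\R^N\setminus B_R}v_n$ vanishes uniformly in $n$ as $R\to\infty$ (a consequence of \eqref{eq:radial-lemma-inequalities} giving pointwise domination by a fixed integrable function). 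On $B_R$, I use $L^s_{loc}$-convergence together with the estimate of Proposition~\ref{C-finiteness} adapted to balls to pass to the limit exactly as in the final computation of Proposition~\ref{C-existence-of-maximizer}.

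The main obstacle I foresee is the delicate control of the exterior interaction terms: the logarithmic kernel $\ln\frac{1}{|x-y|}$ is not integrable at infinity and changes sign, so one must combine Young's inequality for the positive part with the $L^1_{ln}$-framework (inequality \eqref{2.5eq} together with Lemma~\ref{converse-inequality}) for the negative part, using the Radial Lemma decay to get uniform-in-$n$ smallness outside large balls. Once this joint control is in place, the limit identity $\Psi(u_n) \to \Psi(u)$ follows and $u \in \cB_\infty^*$ realizes $m_\infty(N,G)$.
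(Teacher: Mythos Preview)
Your overall plan---maximizing sequence, Schwarz symmetrization, weak limit, Lions dichotomy---is sound and matches the paper's skeleton. However, the way you pass to the limit in $\Psi(u_n)$ differs from the paper's argument, and your route contains a genuine gap.

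\textbf{The gap.} In the non-vanishing case you aim to prove $b_0(v_n)\to b_0(v)$ by splitting $b_\pm$ into interior and exterior pieces. For the negative part you assert that $|1_{\R^N\setminus B_R} v_n|_{*}\to 0$ uniformly in $n$ as $R\to\infty$, ``a consequence of \eqref{eq:radial-lemma-inequalities} giving pointwise domination by a fixed integrable function''. This is false as stated: the radial lemma only gives $u_n(r)\le C_N/r$, hence $v_n(r)\le C/r$ for $r$ large (via $G(t)=O(|t|)$), and the candidate dominant $h(r)=C/r$ satisfies
\[
\int_{\R^N\setminus B_1}\ln(1+|x|)\,h(x)\,dx \;=\; \omega_{N-1}\int_1^\infty \ln(1+r)\,r^{N-2}\,dr \;=\;\infty\qquad(N\ge 2).
\]
No interpolation with the $L^N$-bound on $u_n$ rescues this: any H\"older splitting leaves a divergent weight integral. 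So the claimed uniform tail control of $|\cdot|_*$ is not available, and your direct proof of $b_-(v_n)\to b_-(v)$ (hence of $\Psi(u_n)\to\Psi(u)$) does not go through.

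\textbf{How the paper avoids this.} The paper does \emph{not} try to show full convergence. Using the Newton-type formula (Corollary~\ref{cor-newtons-theorem}(ii)) for radial functions, it writes
\[
\Psi(u_n)=2\omega_{N-1}^2\bigl(\Psi_2(u_n)-\Psi_1(u_n)\bigr),
\]
with the \emph{nonnegative} functionals
\[
\Psi_1(u)=\int_1^\infty r^{N-1}G(u(r))\ln r\!\int_0^r\!\rho^{N-1}G(u(\rho))\,d\rho\,dr,\quad
\Psi_2(u)=\int_0^1 r^{N-1}G(u(r))\ln\frac{1}{r}\!\int_0^r\!\rho^{N-1}G(u(\rho))\,d\rho\,dr.
\]
Note this is \emph{not} the $b_+/b_-$ splitting (which separates according to $|x-y|\lessgtr 1$), but a splitting in the outer radial variable. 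The point is that $\Psi_2$ depends only on $v_n\big|_{B_1}$, so the ball machinery (Proposition~\ref{case-1-conclusion} or the computation at the end of Proposition~\ref{C-existence-of-maximizer}) yields $\Psi_2(u)\ge\limsup_n\Psi_2(u_n)$; and $\Psi_1$ is a nonnegative integral with a.e.\ convergent integrands, so Fatou gives $\Psi_1(u)\le\liminf_n\Psi_1(u_n)$. Together these give $\Psi(u)\ge\limsup_n\Psi(u_n)=m_\infty(N,G)$, which is all that is needed. No uniform $L^1_{ln}$-tail control is required.

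\textbf{A secondary point on the vanishing case.} You mention the auxiliary function $U_n$ from Lemma~\ref{sec:maxim-probl-entire-1}, which is the right object. But ``on $B_1$ one argues as in the ball case'' hides a further step: since $u_n\big|_{B_1}\notin W^{1,N}_0(B_1)$ and in general $G(u_n)\not\le G(U_n)$, the paper introduces the modified nonlinearity $G_\eps(t)=\min\{G(|t|^{2/N}+\eps),\,c\,e^{\alpha_N(u_n^{N/(N-1)}(1)+C_N^{N/(N-1)})}\widetilde G(t)\}$, shows $v_n\le G_\eps(U_n)$ on $B_1$, applies Proposition~\ref{case-1-conclusion} to $\Phi_\eps$, and then lets $\eps\to 0$. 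Your sketch should make this reduction explicit; otherwise the ``ball case'' does not literally apply.
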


\begin{proof}
   Let $(u_n)_n$ be a maximizing sequence in $\cB_\infty$ for $m_\infty(N, G)$. By Lemma~\ref{Riesz-rearrangement}, we may assume that $u_n \in \cB_\infty^*$ for $n \in \N$. Moreover, we have $b_-(G(u_n))< \infty$ for every $n \in \N$. Hence, by Corollary~\ref{cor-newtons-theorem}, we have 
	$$
	\Psi(u_n)= b_0(G(u_n),G(u_n))=b_+(G(u_n))-b_-(G(u_n))=2 \omega_{N-1}^2 \Bigl(\Psi_2(u_n) -\Psi_1(u_n)\Bigr)\qquad \text{for $n \in \N$,}
	$$
with the nonnegative functionals $\Psi_1, \Psi_2: \cB_\infty^* \to [0,\infty)$ given by 
$$
	\Psi_1(u) = \int_{1}^\infty r^{N-1} G(u(r)) \ln r \int_{0}^r \rho^{N-1}  G(u(\rho))d\rho dr,
 $$
 
 $$\Psi_2(u)= \int_{0}^1 r^{N-1} G(u(r)) \ln \frac{1}{r} \int_{0}^r \rho^{N-1}  G(u(\rho))d\rho dr.
	$$
Since $\cB_\infty$ is bounded in $W^{1,N}(\R^N)$, we may assume that  
\begin{equation}
  \label{eq:b-1-weak-convergence-r-2}
u_n \weak u \in W^{1,N}(\R^N) \qquad \text{with $u \in \cB_\infty^*$.}
\end{equation}
We first claim that 
\begin{equation}
  \label{eq:phi-2-convergence}
\Psi_2(u) \ge  \limsup_{n \to \infty}\Psi_2(u_n).
\end{equation}
By \cite[Theorem 1.6]{Lions}, we have two possibilities. Either 
\smallskip
\begin{itemize}
\item[i)] $u=0$, or 
\item[ii)] $u \not = 0$, and $\int_{B_1}  e^{(\alpha_N +t )u_n^{\frac{N}{N-1}} } \ dx$ is bounded for some $t>0$ and thus 
  \begin{equation}
    \label{eq:ii-l-1-convergence-r-2}
e^{(\alpha_N +t )u_n^{\frac{N}{N-1}}} \to e^{(\alpha_N +t )u^{\frac{N}{N-1}}}  \quad \hbox{in}  \ \ L^1(B_1).
  \end{equation}
\end{itemize}
We first assume that i) holds. As in the proof of Proposition~\ref{case-1-conclusion}, we then deduce that 
\begin{equation}
  \label{eq:un-r-converge-zero-pointwise}
u_n(r) \to 0 \qquad \text{as $n \to \infty$ for all $r>0$.}  
\end{equation}
Moreover, there exists a constant $c>0$ with 
	\begin{equation}
	\label{eq:estimate-C-upperx-r-2}
	G(s) \le \widetilde G(s):= \frac{c}{\alpha_N} (1+ \alpha_N|s|^{\frac{N}{N-1}})^{\frac{\beta}{N}(N-1)}e^{\alpha_N s^{\frac{N}{N-1}}}\qquad \text{for {$s \in \R$}.} 
	\end{equation}
We may assume that $-\frac{N}{N-1} < \beta<-\frac{N}{2(N-1)}$ from now on. Then 
$$
{\widetilde G}'(s) =\frac{c}{\alpha_N} \frac{\beta(N-1)}{N} (1+ \alpha_N s^{\frac{N}{N-1}})^{\frac{\beta}{N}(N-1)-1} \alpha_N \frac{N}{N-1} s^{\frac{N}{N-1}-1}e^{\alpha_N s^{\frac{N}{N-1}}}$$
$$
+\frac{c}{\alpha_N}  (1+ \alpha_N s^{\frac{N}{N-1}})^{\frac{\beta}{N}(N-1)}\alpha_N \frac{N}{N-1} s^{\frac{N}{N-1}-1}e^{\alpha_N s^{\frac{N}{N-1}}}
$$

$$=c (1+ \alpha_N s^{\frac{N}{N-1}})^{\frac{\beta}{N}(N-1)-1} s^{\frac{N}{N-1}-1}e^{\alpha_N s^{\frac{N}{N-1}}}\frac{N}{N-1} \times
$$
$$\times \Bigl[\beta \frac{N-1}{N} +1+ \alpha_N s^{\frac{N}{N-1}-1}\Bigr]> 0
\qquad \text{for $s >0$}
$$
and therefore $\widetilde G$ satisfies assumption $(G_0)$. Let
\[ W_n:=\bigl(1 + \frac{u_n^{N}(1)}{C_N^{N}}\bigr)^\frac{1}{N} 
(u_n(\cdot )-u_n(1))_+ \in \cB^*_1.\]
Arguing as in the proof of Lemma \ref{sec:maxim-probl-entire-1}, there exists $A=A(N)$ such that for any $n \in \N$ we have \[ u_n^{\frac{N}{N-1}} \le W_n^{\frac{N}{N-1}}+ (1+A^{\frac{N}{N-1}}\sigma_n^{\frac{1}{1-N}})u_n^{\frac{N}{N-1}}(1)\]  
where $\sigma_n =  \bigl(1 + \frac{u_n^{N}(1)}{C_N^{N}}\bigr)^\frac{1}{N-1} -1.$ 
Setting $v_n:= G(u_n)$, taking into account $(\ref{otto})$, there exists $K>0$ such that
\begin{align*}
	v_n &= G\Bigl((u_n^{\frac{N}{N-1}})^\frac{N-1}{N}\Bigr) \\  
	& \le c e^{\alpha_N \Bigl((1+A^{\frac{N}{N-1}}\sigma_n^{\frac{1}{1-N}})u_n^{\frac{N}{N-1}}(1)\Bigr)} \widetilde G(W_n) \leq c e^{\alpha_N K} \widetilde G(W_n)  \qquad \text{for $n \in \N$}.
\end{align*}
For given $\varepsilon>0$,  we have by (\ref{eq:un-r-converge-zero-pointwise}) and since $G$ is strictly increasing on $[0,\infty)$, 
\begin{align*}
	v_n &= G(u_n) = G\Bigl(\bigl(1 + \frac{u_n^{N}(1)}{C_N^{N}}\bigr)^{-\frac{1}{N}} W_n + u_n(1)\Bigr) \\
	&\le G\Bigl(W_n+ u_n(1)\Bigr) \le G\Bigl(W_n+ \varepsilon\Bigr)  \quad \text{in $B_1$ for $n$ sufficiently large}
\end{align*} by (\ref{eq:un-r-converge-zero-pointwise}). We now fix $\eps>0$ and define $\varepsilon>0$ and define $G_\varepsilon: \R \to \R$ by $G_\varepsilon(t)=\min \{ G( |t|+\varepsilon ),c e^{\alpha_N K } \widetilde G(|t|) \}$. Then $G_\eps$ has at most $\beta$-critical growth, satisfies $(G_0)$.   
Moreover, we have $v_n \le G_\eps(U_n)$ and therefore
\begin{align*}
	\Psi_2(u_n) \le \int_{0}^1 r^{N-1} v_n(r) \ln \frac{1}{r} \int_{0}^r \rho^{N-1}  
	v_n(\rho) d\rho dr &\le \int_{0}^1 r^{N-1} {G_\eps}(U_n(r)) \ln \frac{1}{r} \int_{0}^r \rho^{N-1}  
	{G_\eps}(U_n(\rho)) d\rho dr\\
	&= \frac{1}{2\omega_{N-1}^2}b_0(1_{B_1} G_\eps(U_n),1_{B_1} G_\eps(U_n))
\end{align*}
for $n$ sufficiently large. We now define 
$$
\Phi_\eps: \cB_1 \to \R, \qquad \Phi_\eps(U) = b_0(1_{B_1} G_\eps(U),1_{B_1} G_\eps(U))
$$
Then Proposition~\ref{case-1-conclusion} applies to $\Phi_\eps$ and yields that
$$
\Psi_2(u_n) \le \Phi_\eps(U_n) \to \Phi_\eps(0) \qquad \text{as $n \to \infty$,}
$$
since $U_n \weak 0$ in $W^{1,N}_0(B_1)$. Moreover, since $G(0)=0$ by assumption, for $\eps>0$ sufficiently small we have  
$$
\Phi_\eps(0)= b_0(1_{B_1} G_\eps(0),1_{B_1} G_\eps(0))= b_0(1_{B_1} G(\eps),
1_{B_1} G(\eps)).
$$
Whereas 
$$
b_0(1_{B_1} G(\eps),
1_{B_1} G(\eps)) \to b_0 (1_{B_1} G(0),
1_{B_1} G(0))= b_0(0,0)= 0 \qquad \text{as $\eps \to 0^+$.}
$$
It thus follows that 
$$
\limsup_{n \to \infty}\Psi_2(u_n) \le 0 = \Psi_2(0) = \Psi_2(u)
$$
and therefore (\ref{eq:phi-2-convergence}) holds in this case.\\
We now assume that alternative (ii) holds. With $v_n= G(u_n)$ and $v= G(u)$, we then deduce, as in the proof of Proposition~\ref{C-existence-of-maximizer}, that  $v_n$ is bounded in  $L^{s_0}(\R^N)$ with $s_0=1 + \frac{t}{\alpha_N}>1$, and that
$$
v_n \to v  \qquad \text{in $L^s(\R^N)\qquad$ for $1 \le s < s_0$.}
$$
Moreover, since 
$$
\Psi_2(u_n)= \int_{0}^1 r^{N-1} v_n(r) \ln \frac{1}{r} \int_{0}^r \rho^{N-1}  v_n(r) d\rho dr
$$
and therefore 
\begin{align*}
&\Psi_2(u_n)-\Psi_2(u)= \int_{0}^1 r^{N-1} v_n(r) \ln \frac{1}{r} \int_{0}^r \rho^{N-1}  v_n(r) d\rho dr - \int_{0}^1 r^{N-1} v(r) \ln \frac{1}{r} \int_{0}^r \rho^{N-1}  v(\rho) d\rho dr\\
&= \int_{0}^1 r^{N-1} v_n(r) \ln \frac{1}{r} \int_{0}^r \rho^{N-1}  [v_n(\rho)-v(\rho)] d\rho dr + \int_{0}^1 r^{N-1} [v_n(r)-v(r)] \ln \frac{1}{r} \int_{0}^r \rho^{N-1}  v(r) d\rho dr
\end{align*}
we may argue exactly as in the proof of Proposition~\ref{C-existence-of-maximizer} to see that $\Psi_2(u_n)-\Psi_2(u) \to 0$ as $n \to \infty$. 
Hence (\ref{eq:phi-2-convergence}) also holds in this case.\\
Finally, by Fatou's lemma, we also have 
$\Psi_1(u) \le \liminf \limits_{n \to \infty}\Psi_1(u_n)$. Combining this with (\ref{eq:phi-2-convergence}), we conclude that $\Psi(u)\ge \limsup \limits_{n \to \infty}\Psi(u_n)$. Hence $u$ is a maximizer of $\Psi$ in $\cB_\infty$. 
\end{proof}
\begin{proposition}
\label{B-infty-infiniteness}
Suppose that $G$ satisfies $(G_0)$ and has at least $\beta$-critical growth for some $\beta>-\frac{N}{2(N-1)}$. Then there exists a sequence of functions $u_n \in \cB_\infty \cap L^\infty(\R^N)$ with $\Psi(u_n) \to \infty$ as $n \to \infty$.
\end{proposition}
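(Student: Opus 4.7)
The plan is to adapt the Moser sequence argument of Proposition~\ref{C-infiniteness} to the entire-space setting, where the constraint $\|u\|_N\le 1$ involves both $\nabla u$ and $u$ rather than $\nabla u$ alone.

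First, I would take the same Moser functions $m_n$ as in the proof of Proposition~\ref{C-infiniteness}. These are supported in $\overline{B_1}$ and satisfy $|\nabla m_n|_N=1$ exactly for large $n$. The key preliminary computation is a bound on $|m_n|_N^N$: splitting the integral into $B_{1/n}$ and $B_1\setminus B_{1/n}$ and using the substitution $r=e^{-t}$ on the outer part, one gets
\begin{equation*}
|m_n|_N^N = \frac{(\ln n)^{N-1}}{N\,n^N}+\frac{1}{\ln n}\int_{1/n}^{1}r^{N-1}\bigl(\ln\tfrac{1}{r}\bigr)^N\,dr = O\!\left(\tfrac{1}{\ln n}\right),
\end{equation*}
so that $\|m_n\|_N^N = 1 + O(1/\ln n)$, and hence $\|m_n\|_N^{N/(N-1)} = 1 + O(1/\ln n)$ as well.

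Next, I would set $\tilde m_n := m_n/\|m_n\|_N \in W^{1,N}(\R^N)\cap L^\infty(\R^N)$, which by construction lies in $\cB_\infty$ and is still supported in $\overline{B_1}$. Because the support is contained in $B_1$, the integrals defining $\Psi_\pm(\tilde m_n)$ reduce to integrals over $B_1\times B_1$, and $\Psi(\tilde m_n)=\Phi(\tilde m_n)$ with $\Phi_\pm(\tilde m_n)<\infty$ by Corollary~\ref{corollary-nonuniform-finiteness}(i). Thus it suffices to show $\Phi(\tilde m_n)\to\infty$.

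Now I would repeat the argument from Proposition~\ref{C-infiniteness}. On $B_{1/n}$ one has $\tilde m_n \equiv \|m_n\|_N^{-1}\omega_{N-1}^{-1/N}(\ln n)^{(N-1)/N}$, so using the at-least-$\beta$-critical growth assumption and $\alpha_N=N\omega_{N-1}^{1/(N-1)}$,
\begin{equation*}
G(\tilde m_n) \;\ge\; c_1\, \tilde m_n^{\,\beta}\, n^{N/\|m_n\|_N^{N/(N-1)}} \qquad \text{on } B_{1/n}.
\end{equation*}
The decisive observation is that $\|m_n\|_N^{N/(N-1)}=1+O(1/\ln n)$ forces
\begin{equation*}
n^{N/\|m_n\|_N^{N/(N-1)}} = n^{N}\cdot n^{-O(1/\ln n)} \;\ge\; c\,n^N
\end{equation*}
for some $c>0$ and all $n$ large, because $n^{-1/\ln n}=e^{-1}$. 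Substituting this lower bound into the double-integral estimate exactly as in Proposition~\ref{C-infiniteness} yields, for some constant $C>0$,
\begin{equation*}
\Phi(\tilde m_n) \;\ge\; C\,(\ln n)^{\,2\frac{N-1}{N}\beta+1},
\end{equation*}
and since $\beta>-\frac{N}{2(N-1)}$ implies $2\tfrac{N-1}{N}\beta+1>0$, we conclude $\Psi(\tilde m_n)=\Phi(\tilde m_n)\to\infty$.

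The only delicate point is step three: one must quantify that the rescaling factor $1/\|m_n\|_N$ tends to $1$ fast enough so that the exponential gain $e^{\alpha_N \tilde m_n^{N/(N-1)}}$ retains order $n^N$ (and not some smaller power) on $B_{1/n}$. This is where the estimate $\|m_n\|_N^N=1+O(1/\ln n)$, rather than a weaker bound, is essential; a slower convergence rate would destroy the balance between $n^{2N}$ and $\int_0^{1/n} r^{2N-1}\ln\tfrac{1}{r}\,dr$ on which the blow-up relies.
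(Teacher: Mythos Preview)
Your argument is correct and follows essentially the paper's route: take the Moser sequence $m_n$ from Proposition~\ref{C-infiniteness}, observe that $\Psi=\Phi$ for functions supported in $\overline{B_1}$, and reproduce the blow-up estimate. The paper's own proof is a single sentence asserting that $\|u_n\|_N\le 1$ for $n$ large and then citing Proposition~\ref{C-infiniteness} directly. Your explicit normalization $\tilde m_n=m_n/\|m_n\|_N$, together with the estimate $\|m_n\|_N^{N/(N-1)}=1+O(1/\ln n)$ and the observation $n^{-O(1/\ln n)}\ge c>0$, is actually a needed refinement: for the functions $m_n$ as defined in Proposition~\ref{C-infiniteness} one has $|\nabla m_n|_N=1$ exactly, so $\|m_n\|_N^N=1+|m_n|_N^N>1$ and the paper's claim that the unnormalized $m_n$ lie in $\cB_\infty$ is not literally true. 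Your extra step closes this gap without altering the core idea.
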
	
\begin{proof}
It suffices to take the sequence  $u_n$ as in the proof of Proposition~\ref{C-infiniteness}.
Since  $\Psi({u}_n)=\Phi({u}_n)$ for every $n \in \N$ and 
$\| u_n\|_N  \le 1$ for $n$ sufficiently large, 
the thesis follows.
\end{proof}

The following lemma completes the proof of Theorem~\ref{sec:introduction-main-thm}. 

\begin{lemma}
\label{maximizer-radial-up-to}
Let $G$ satisfy $(G_1)$, and let $u \in \cB_\infty$ be a maximizer for $\Psi \big|_{\cB_\infty}$. Then $u = u^*$ up to a change of sign and translation.
\end{lemma}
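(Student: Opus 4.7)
The plan is to mirror the argument of Lemma~\ref{maximizer-strictly-positive}, relying on the equality case in Lemma~\ref{Riesz-rearrangement}(iii) applied to $v := G(u)$. First I would verify that the hypotheses of part (iii) hold for $v$. Under $(G_1)$, the condition $G(s) = O(|s|)$ near $0$ combined with Lemma~\ref{lemma-nonuniform-finiteness-convolution}(ii) yields $v \in L^2(\R^N)$. By Corollary~\ref{corollary-nonuniform-finiteness}(ii), $b_+(v) = \Psi_+(u) < \infty$; and since $u$ is a maximizer, $\Psi(u) \in \R$, so $b_-(v) = \Psi_+(u) - \Psi(u) < \infty$.

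Next, Lemma~\ref{symmetrized-invariance-H-1} ensures $u^* \in \cB_\infty$, and parts (i)--(ii) of Lemma~\ref{Riesz-rearrangement} give $\Psi(u^*) \ge \Psi(u)$; hence $u^*$ is also a maximizer and $\Psi(u) = \Psi(u^*)$. Since $G$ is even and strictly increasing on $[0,\infty)$, one has the pointwise identity $v^* = G(u)^* = G(u^*)$, so the previous equality reads $b_0(v) = b_0(v^*)$, i.e.\ the Riesz-type inequality in (iii) holds with equality. Lemma~\ref{Riesz-rearrangement}(iii) then produces $x_0 \in \R^N$ with $v = v^*(\cdot - x_0)$, that is $G(u) = G(u^*(\cdot - x_0))$ a.e. Invoking once more the evenness and strict monotonicity of $G$, this upgrades to $|u| = u^*(\cdot - x_0)$ a.e.

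The main obstacle is the final step: passing from $|u| = u^*(\cdot - x_0)$ to $u = \pm u^*(\cdot - x_0)$. Set $\Omega := \{u^*(\cdot - x_0) > 0\}$; by radial monotonicity of $u^*$, $\Omega$ is either all of $\R^N$ or an open ball $B_R(x_0)$, hence in any case a connected open set. Off $\Omega$ the identity just proved forces $u = 0$ a.e. On $\Omega$ one can write $u = \sigma \cdot u^*(\cdot - x_0)$ with $\sigma(x) \in \{-1,+1\}$ measurable, and the strict positivity of $u^*(\cdot - x_0)$ on the connected open set $\Omega$ combined with the Sobolev regularity $u \in W^{1,N}(\R^N)$ prevents $\sigma$ from jumping between $+1$ and $-1$: otherwise the positive and negative parts $u_\pm$ would both be nontrivial in $\Omega$ with essentially disjoint supports, and the resulting jump of size $2\,u^*(\cdot - x_0) > 0$ across the interface would produce a singular contribution to $\nabla u$ incompatible with $u \in W^{1,N}(\R^N)$. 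This is the same mechanism tacitly used at the end of Lemma~\ref{maximizer-strictly-positive}. Therefore $\sigma$ is a.e.\ constant on $\Omega$, which yields $u = \pm u^*(\cdot - x_0)$ a.e., as claimed.
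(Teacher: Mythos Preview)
Your proof is correct and follows essentially the same route as the paper's: both arguments verify the hypotheses of Lemma~\ref{Riesz-rearrangement}(iii) for $v=G(u)$ (using Lemma~\ref{lemma-nonuniform-finiteness-convolution}(ii) and Corollary~\ref{corollary-nonuniform-finiteness}(ii)), deduce equality in the Riesz inequality from the maximizing property, and conclude $G(u)=G(u^*)(\cdot-x_0)$, hence $|u|=u^*(\cdot-x_0)$ via the strict monotonicity and evenness of $G$. The paper dispatches the final passage from $|u|=u^*(\cdot-x_0)$ to $u=\pm u^*(\cdot-x_0)$ in a single sentence, whereas you spell out the connectedness/Sobolev-regularity mechanism; your justification is sound (and can be made fully rigorous by noting that $u^*(\cdot-x_0)$ is continuous and positive on $\Omega$, so $u_+/u^*(\cdot-x_0)\in W^{1,N}_{loc}(\Omega)$ takes values in $\{0,1\}$ and is therefore constant on the connected set $\Omega$).
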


\begin{proof}
Let $u \in \cB_\infty$ be a general maximizer of $\Psi \big|_{\cB_\infty}$. By Lemma~\ref{Riesz-rearrangement}(iii), it then follows that $u^* \in \cB_\infty^*$ is also a maximizer of $\Psi \big|_{\cB_\infty}$. Moreover, since $G(u) \in L^2(\R^N)$ by Lemma~\ref{lemma-nonuniform-finiteness-convolution} and $b_\pm(G(u),G(u))<\infty$, it also follows from Lemma~\ref{Riesz-rearrangement}(iii) that $G(u)$ equals $G(u)^*$ up to translation. Since $G$ is even and strictly increasing on $[0,\infty)$ by assumption $(G_0)$, this implies that $u$ equals $u^*$ up to sign and translation. 
\end{proof}

\section{The Euler-Lagrange equation}
\label{sec:euler-lagr-equat}
\begin{theorem}
\label{e-l-theorem-section-ball-case}
Suppose that $G \in C^1(\R)$ satisfies $(G_0)$, and suppose that $g:= G'$ satisfies
\begin{equation}
  \label{eq:f-prime-growth-condition}
g(t) \le c e^{\alpha |t|^{(N-1)/N}}\qquad \text{for $t \in \R$ with constants $c,\alpha>0$.}
\end{equation}
Suppose furthermore that $u \in \cB_1$ is  a maximizer of $\Phi \big|_{\cB_1}$. Then there exists $\theta >0$ such that $u$ satisfies the Euler-Lagrange equation in weak sense, i.e., 
	\begin{align}
          \int_{B_1} |\nabla u|^{N-2} \nabla u \nabla \phi \,dx &= \theta \, b_0(1_{B_1}G(u),1_{B_1}g(u) \phi)  \label{weak-e-l} \\ 
&= \theta  \int_{B_1} (\ln \frac{1}{|\cdot|}*\bigl(1_{B_1} G(u)\bigr)g(u)\phi\,dx \qquad \text{for all $\phi \in W^{1,N}_0(B_1).$}\nonumber
	\end{align}
\end{theorem}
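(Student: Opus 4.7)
The plan is to apply the Lagrange multiplier method to the constrained maximization of $\Phi$ on $\{v \in W^{1,N}_0(B_1): J(v) \le 1/N\}$, where $J(v):=\frac{1}{N}|\nabla v|_N^N$. Four steps are needed: (i) Gateaux differentiability of $\Phi$ at $u$ with the expected formula, (ii) activation of the constraint $|\nabla u|_N=1$, (iii) extraction of the multiplier, (iv) verification that $\theta>0$. By Lemma~\ref{maximizer-strictly-positive} we may assume $u\in\cB_1^*$, hence $u$ is radial and strictly positive in $B_1$.

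For Step (i), fix $\phi\in W^{1,N}_0(B_1)$ and expand the difference quotient by means of the identity $ab-cd=a(b-d)+(a-c)d$, so that
\begin{equation*}
  \frac{\Phi(u+t\phi)-\Phi(u)}{t} = \int_{B_1}\!\int_{B_1}\!\ln\tfrac{1}{|x-y|}\Bigl[G(u(x){+}t\phi(x))\tfrac{G(u{+}t\phi)-G(u)}{t}(y)+\tfrac{G(u{+}t\phi)-G(u)}{t}(x)G(u(y))\Bigr]\,dx\,dy.
\end{equation*}
Since $|x-y|\le 2$ on $B_1\times B_1$, the negative part of the kernel is bounded, while the positive part $\ln^{+}\frac{1}{|x-y|}$ must be handled via dominated convergence. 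The uniform bound $|G(u+t\phi)|, |g(u+t\phi)(u+\phi)| \lesssim e^{\alpha'|u|^{N/(N-1)}}e^{\alpha'|\phi|^{N/(N-1)}}$, together with the Trudinger--Moser inequality~\eqref{dis} and Lemma~\ref{lemma-nonuniform-finiteness-convolution} applied to both $G$ and $g$, provides integrable majorants: in particular the convolutions $\ln^{+}\tfrac{1}{|\cdot|}*(1_{B_1}G(u))$ and $\ln^{+}\tfrac{1}{|\cdot|}*(1_{B_1}g(u))$ lie in $L^\infty(B_1)$. Passing to the limit and using Fubini to symmetrize yields
\begin{equation*}
  \Phi'(u)[\phi]\;=\;2\int_{B_1}\Bigl(\ln\tfrac{1}{|\cdot|}*(1_{B_1}G(u))\Bigr)\,g(u)\,\phi\,dx\;=\;2\,b_0\bigl(1_{B_1}G(u),\,1_{B_1}g(u)\phi\bigr).
\end{equation*}

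For Step (ii) I argue by contradiction: if $|\nabla u|_N<1$, then $tu\in\cB_1$ for all $t$ slightly larger than $1$. By Corollary~\ref{cor-newtons-theorem}(ii),
\begin{equation*}
  \Phi(tu)=2\omega_{N-1}^2\int_0^1 r^{N-1}G(tu(r))\ln\tfrac{1}{r}\int_0^{r}\rho^{N-1}G(tu(\rho))\,d\rho\,dr,
\end{equation*}
and since $G$ is strictly increasing on $[0,\infty)$ by $(G_0)$ and $u>0$, each $G(tu(r))$ strictly exceeds $G(u(r))$ for $t>1$, so $\Phi(tu)>\Phi(u)$, contradicting the maximality of $u$. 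Hence $|\nabla u|_N=1$. Step (iii) is then routine: the functional $J$ is of class $C^1$ on $W^{1,N}_0(B_1)$ with $J'(u)[\phi]=\int_{B_1}|\nabla u|^{N-2}\nabla u\cdot\nabla\phi\,dx$, and $J'(u)\not\equiv 0$ since $u\not\equiv0$. The classical Lagrange multiplier theorem applied to the constrained maximum on $\{J=1/N\}$ produces $\lambda\in\R$ with $\Phi'(u)=\lambda J'(u)$, which upon setting $\theta=2/\lambda$ yields precisely~\eqref{weak-e-l}.

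For Step (iv), testing \eqref{weak-e-l} with $\phi=u$ gives $1=|\nabla u|_N^N=\theta\cdot b_0(1_{B_1}G(u),1_{B_1}g(u)u)$. The bilinear form on the right is strictly positive: by Corollary~\ref{cor-newtons-theorem}(i), since $G(u),g(u)u$ are radial, nonnegative and supported in $B_1$, it equals $\omega_{N-1}^2$ times a sum of integrals involving only the positive kernel $\ln(1/r)$ on $(0,1)$ and these nonnegative densities; strict monotonicity of $G$ prevents $g\equiv0$ on the range of $u$, so $g(u)u>0$ on a set of positive measure while $G(u)>G(0)\ge0$ everywhere in $B_1$. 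Therefore $\theta>0$. The principal technical obstacle is Step (i): the signed kernel together with the borderline exponential growth of $G$ and $g$ makes the choice of dominating function delicate, and this is precisely where the uniform $L^\infty$ estimates on the logarithmic convolution potentials of $G(u)$ and $g(u)$ from Lemma~\ref{lemma-nonuniform-finiteness-convolution} play the essential role.
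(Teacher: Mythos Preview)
Your overall strategy matches the paper's: compute the Gateaux derivative of $\Phi$, verify the constraint is active, extract a multiplier, and determine its sign by testing against $u$. Steps (i), (ii), and (iv) are correct; in fact your Step~(ii) is more explicit than the paper, which simply writes ``Let $u\in\cB_1$ such that $\|u\|_N=1$'' without justification.

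The gap is in Step~(iii). The ``classical Lagrange multiplier theorem'' for a constrained extremum on the \emph{level set} $\{J=1/N\}$ requires $\Phi$ to be at least Fr\'echet differentiable at $u$, so that one can differentiate $\Phi$ along $C^1$ curves lying in the constraint manifold. You have only established Gateaux differentiability. Because $G$ and $g$ are allowed critical exponential growth with an exponent $\alpha$ not a priori bounded by $\alpha_N$, the Nemytskii map $v\mapsto G(v)$ from $W^{1,N}_0(B_1)$ to $L^s(B_1)$ need not be continuous, so $C^1$-ness (hence Fr\'echet differentiability) of $\Phi$ is genuinely in doubt here and cannot simply be assumed.

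The paper circumvents this by exploiting the \emph{inequality} constraint directly: for $\phi\in C^1_0(B_1)$ with $J'(u)[\phi]=\int_{B_1}|\nabla u|^{N-2}\nabla u\cdot\nabla\phi\,dx<0$, a convexity estimate gives $u+t\phi\in\cB_1$ for small $t>0$, whence $\Phi(u+t\phi)\le\Phi(u)$ and so the one-sided limit $\Phi'(u)[\phi]\le0$. Approximating then yields $\Phi'(u)[\phi]=0$ whenever $J'(u)[\phi]=0$, which is precisely the linear-algebraic content of the multiplier rule, and only Gateaux differentiability at $u$ is used. Your proof is easily repaired by inserting this argument in place of the abstract appeal; everything else you wrote goes through unchanged.
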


Here we note that the term on the RHS of (\ref{weak-e-l}) is well-defined since $1_{B_1}G(u), 1_{B_1}g(u) \phi \in L^1_{ln}(\R^N) \cap L^2(\R^N)$ by Lemma~\ref{lemma-nonuniform-finiteness-convolution} and since these functions vanish on $\R^N \setminus B_1$. 

\begin{proof}
Let $u \in \mathcal{B}_1$ is a maximer of  $\Phi \big|_{\cB_1}$. 
We first consider $\phi \in C^1_0(B_1)$ satisfying  
\begin{equation}
  \label{eq:euler-lagrange-extra-assumption}
	\int_{B_1}|\nabla u|^{N-2} \nabla u\nabla \phi \,dx<0.  
\end{equation}
We begin to prove that 
        \begin{equation}
\lim_{t \to 0^+}\frac{\Phi(u+t\phi)-\Phi(u)}{t} 
= 2b_0(1_{B_1}G(u),1_{B_1}g(u) \phi)= 2 \int_{B_1} \bigl[\ln \frac{1}{|\cdot|} * 1_{B_1} G(u)\bigr]g(u) \phi\,dx. \label{derivative-formula}
        \end{equation}
Indeed we show that there exists $t_0>0$ such that $u+t\phi \in \mathcal{B}_1$ for $0\leq t\leq t_0$.\\  For convexity we have \begin{equation*}
    \int_{B_1} |\nabla u|^{N}dx\geq \int_{B_1} |\nabla u + t\nabla \phi|^Ndx-Nt\int_{B_1} |\nabla u +t \nabla \phi|^{N-2} \nabla u \nabla \phi dx 
\end{equation*}
\begin{equation*}
    -Nt^2\int_{B_1} |\nabla u +t \nabla \phi|^{N-2} |\nabla \phi|^2dx.
\end{equation*}
We isolate the first right-hand term of the inequality 

\begin{align}\label{principale}
   &  \int_{B_1} |\nabla u|^{N}dx +Nt\int_{B_1} |\nabla u +t \nabla \phi|^{N-2} \nabla u \nabla \phi dx+ Nt^2\int_{B_1} |\nabla u +t \nabla \phi|^{N-2} | \nabla \phi|^2 dx
   \nonumber \\ &  +Nt\int_{B_1} |\nabla u|^{N-2} \nabla u \nabla \phi dx-Nt\int_{B_1} |\nabla u|^{N-2} \nabla u \nabla \phi dx
     \geq \int_{B_1} |\nabla u + t\nabla \phi|^Ndx. 
\end{align}

\bigskip

We notice that  

	\begin{align}\label{LI}
&Nt\int_{\Omega}\bigl(|\nabla u +t\nabla \phi|^{N-2}- |\nabla u|^{N-2}|\bigr)\nabla u \nabla \phi \nonumber \\ & \leq 
Nt\int_{\Omega}||\nabla u +t\nabla \phi|^{N-2}- |\nabla u|^{N-2}||\nabla u| |\nabla \phi|\leq N(N-2)t^2 \int_{\Omega} |\nabla u +t\nabla \phi|^{N-3}|\nabla \phi |^2 |\nabla u| \nonumber
\\ &  
\leq N(N-2) t^2 2^{N-3} \int_{\Omega}
 |\nabla u|^{N-2} |\nabla \phi|^{2} + 
 N(N-2) t^{N-1} 2^{N-3} \int_{\Omega}
 |\nabla u| |\nabla \phi|^{N-1}.
\end{align}

Moreover  we have 
 \begin{align}\label{secondaria}
 &   \int_{B_1} |\nabla u + t\nabla \phi|^{N-2} |\nabla \phi|^{2} dx 	\nonumber \\ & \leq 2^{N-2}\bigg(\int_{B_1} |\nabla u|^{N-2} |\nabla \phi|^2 dx+ t^{N-2} \int_{B_1} |\nabla \phi |^{N} dx \bigg). 
  \end{align}

By \eqref{principale}, \eqref{LI} and \eqref{secondaria} we conclude
\begin{align}\label{finale}
&	 \int_{B_1} |\nabla u|^{N}dx  +
	 Nt^2
	 2^{N-2} \int_{B_1} |\nabla u|^{N-2} |\nabla \phi|^2 dx+ 
	 N	2^{N-2} t^{N} \int_{B_1} |\nabla \phi |^{N} dx \nonumber \\ & + N(N-2)t^2 2^{N-3}\int_{B_1} |\nabla u|^{N-2}|\nabla \phi|^2\, dx  
	+ N(N-2) t^{N-1 }2^{N-3}\int_{B_1}|\nabla \phi|^{N-1}|\nabla u|dx \nonumber \\ & + Nt\int_{B_1} |\nabla u|^{N-2} \nabla u \nabla \phi\, dx
	\geq \int_{B_1} |\nabla u + t\nabla \phi|^N\, dx. 
\end{align}

By (\ref{eq:euler-lagrange-extra-assumption}) and 
(\ref{finale}),  $\exists$ $t_0>0$ s.t. 
\begin{equation}\label{eq:euler-lagrange-extra-assumption-consequence}
    u+t\phi \in \mathcal{B}_1 \text{ $\ $ for $\ $ }  0\leq t\leq t_0.
\end{equation}

Let $v_t:= 1_{B_1}G(u+t \phi)$ for $0 \le t \le t_0$. Since $u + t \phi \in W^{1,N}_0(B_1)$, Lemma~\ref{lemma-nonuniform-finiteness-convolution} implies that $v_t \in L^s(\R^N)$ for any $s \in [1,\infty)$. We therefore see that, for $t \in (0,t_0)$,
	\begin{align}
	\Phi(u+t\phi)-\Phi(u) &= b_0(v_t)-
	b_0(v_0)=b_0(v_t-v_0,v_t)+ b_0(v_0,v_t-v_0) \nonumber\\
	&=t \Bigl(b_0(1_{B_1}g(u+s_t \phi)\phi,v_t)+ b_0(v_0,1_{B_1}g(u+s_t \phi)\phi)\Bigr) \label{derivative-formula-prelim}
	\end{align}
	where $s_t$ belongs to the interval $(0,t)$. Here we note that, for fixed $s \in [0,\infty)$, 
the functions $v_t$ and $w_t:= 1_{B_1}g(u+s_t \phi)$ are uniformly bounded in $L^s(\R^N)$ for $0 \le t \le t_0$. To see this, 
we remark that $
\gamma:= |u| + t_0 |\phi| \in W^{1,N}_0(B_1)$ and that 
$$
\max \{|u+s_t \phi|, |u+t \phi|\} \le \gamma  \qquad \text{in $B_1$.} 
$$
Consequently, 
$$
\max \bigl\{e^{\alpha(|u+s_t \phi|)^{N/(N-1)}}, e^{\alpha(|u+t \phi|)^{N/(N-1)}}\bigr\} \le e^{\alpha \gamma^{N/(N-1)}} \qquad \text{for $0 \le t \le t_0$,}
$$
and $e^{\alpha \gamma^{N/(N-1)}} \in L^s(B_1)$ for every $s \in [0,\infty)$ by Lemma~\ref{lemma-nonuniform-finiteness-convolution}. The claim thus follows from (\ref{eq:general-growth-condition}) and (\ref{eq:f-prime-growth-condition}).
Moreover, since $u \not = 0$ by Remark~\ref{m-F-greater-F-0} and 
$$
|u + t \phi| \to |u|\quad \text{and}\quad  |u + s_t \phi| \to |u| \in W^{1,N}_0(B_1) \qquad \text{as $t \to 0^+$,}
$$
\cite[Theorem 1.6]{Lions} implies that   
  \begin{equation}
    \label{eq:ii-l-1-convergence-e-l-eqprelimineary}
v_t \to v_0\quad \text{and}\quad  w_t \to 1_{B_1}g(u) \qquad \text{in $L^1(\R^N)$}.
  \end{equation}
Together with the uniform boundedness property we just proved, this implies, by interpolation, that 
  \begin{equation}
    \label{eq:ii-l-1-convergence-e-l-eq}
v_t \to v_0\quad \text{and}\quad  w_t \to 1_{B_1}g(u) \qquad \text{in $L^s(\R^N)$ for $1 \le s < \infty$.}
  \end{equation}
Since also $\ln^+ \frac{1}{|\cdot|} \in L^s(\R^N)$ for $1 \le s < \infty$, Young's inequality implies that 
\begin{equation}
  \label{eq:b-plus-convergence-ball-case}
b_+(w_t \phi,v_t) \to b_+(1_{B_1}g(u) \phi,v_0)\quad \text{and}\quad b_+(v_0, w_t \phi) \to 
b_+(v_0,1_{B_1}g(u) \phi)  
\end{equation}
as $t \to 0^+$.
 Moreover, using (\ref{eq:b-second-ineq}) and (\ref{eq:ii-l-1-convergence-e-l-eqprelimineary}), we easily see that 
\begin{equation}
  \label{eq:b-minus-convergence-ball-case}
b_-(w_t \phi,v_t) \to b_-(1_{B_1}g(u) \phi,v_0)\quad \text{and}\quad b_-(v_0, w_t \phi) \to 
b_-(v_0,1_{B_1}g(u) \phi).  
\end{equation}
Combining (\ref{derivative-formula-prelim}), (\ref{eq:b-plus-convergence-ball-case}) and (\ref{eq:b-minus-convergence-ball-case}), we obtain (\ref{derivative-formula}). Since $u$ is a maximizer of $\Phi$ on $\cB_1$, it follows from (\ref{derivative-formula}) and (\ref{eq:euler-lagrange-extra-assumption-consequence}) that    
\begin{equation}
  \label{eq:maximation-e-l-prelim-cond}
b_0(1_{B_1}G(u),1_{B_1}g(u) \phi) \le 0 \qquad \text{for all $\phi \in C^1_0(B_1)$ satisfying (\ref{eq:euler-lagrange-extra-assumption}). }
\end{equation}
Next we let $\phi \in C^1_0(B_1)$ satisfy
\begin{equation}
  \label{eq:euler-lagrange-extra-assumption-equality}
	\int_{B_1} |\nabla u|^{N-2}\nabla u \nabla \phi \,dx=0.  
\end{equation}
Approximating $\phi$ and $-\phi$ with functions satisfying (\ref{eq:euler-lagrange-extra-assumption}), we  deduce from (\ref{eq:maximation-e-l-prelim-cond}) that 
\begin{equation}
  \label{eq:maximation-e-l-prelim-cond-equality}
0= b_0(1_{B_1}G(u),1_{B_1}g(u) \phi) = \int_{B_1} (\ln \frac{1}{|\cdot|}*\bigl(1_{B_1} G(u)\bigr)g(u)\phi\,dx 
\end{equation}
for all $\phi \in C^1_0(B_1)$ satisfying (\ref{eq:euler-lagrange-extra-assumption-equality}). Since $g(u) \in L^s(B_1)$ for $s \in [1,\infty)$, it follows from Lemma~\ref{lemma-nonuniform-finiteness-convolution}(i)
that
$$
(\ln \frac{1}{|\cdot|}*\bigl(1_{B_1} G(u)\bigr)g(u) \qquad \in L^s(B_1) \qquad \text{for $s \in [1,\infty)$.} 
$$
Consequently, the RHS of (\ref{eq:maximation-e-l-prelim-cond-equality}) defines a continuous linear form on $W^{1,N}_0(B_1)$. By density, it thus follows that (\ref{eq:maximation-e-l-prelim-cond-equality}) holds for all $\phi \in W^{1,N}_0(B_1)$  satisfying (\ref{eq:euler-lagrange-extra-assumption-equality}). Hence there exists $\Theta \in \R$ with the property that 
\begin{equation}
  \label{eq:capital-Theta-property}
\Theta	\int_{B_1} |\nabla u|^{N-2}\nabla u \nabla \phi \,dx =\int_{B_1} (\ln \frac{1}{|\cdot|}*\bigl(1_{B_1} G(u)\bigr)g(u)\phi\,dx\qquad \text{for all $\phi \in W^{1,N}_0(B_1).$}
\end{equation}
Next we claim that $\Theta>0$. For this we recall that $u \in \cB_1^*$ by Lemma~\ref{maximizer-strictly-positive}. In particular, $u$ is a radial function. Applying (\ref{eq:capital-Theta-property}) to $\phi=u$ and using Corollary~\ref{cor-newtons-theorem}, we deduce that 
\begin{equation}
  \label{eq:capital-Theta-property-applied-u}
\Theta |\nabla u|_N^N = \int_{B_1} (\ln \frac{1}{|\cdot|}*\bigl(1_{B_1} G(u)\bigr)g(u)u\,dx = 2\omega_{N-1}^2 \int_0^1 r^{N-1} h_u g(u)u\,dr 
\end{equation}
with the function 
$$
r \mapsto h_u(r)= \ln \frac{1}{r} \int_{0}^r 
\rho^{N-1} G(u(\rho)) d\rho +  \int_{r}^1  \rho^{N-1} \bigl(\ln \frac{1}{\rho}\bigr)G(u(\rho)) d\rho.  
$$
By Lemma~\ref{maximizer-strictly-positive}, we have $u>0$ on $(0,1)$ and therefore also $   h_u>0$ on $(0,1)$. 
Moreover, we have $g >0$ a.e. on $(0,\infty)$ by assumption $(G_0)$, which implies in particular that $g(u) \ge 0$. Consequently, (\ref{eq:capital-Theta-property-applied-u}) implies that $\Theta \ge 0$. Moreover, if we suppose by contradiction that $\Theta=0$, then we have 
$$
\int_0^1 r^{N-1}h_u g(u)u\,dr = 0
$$
 and therefore $g(u) \equiv 0$. Since $u$ is continuous on $(0,1]$ with $u(1)=0$ and $g>0$ a.e. on $(0,\infty)$, it then follows that $u \equiv 0$, a contradiction. We thus obtain $\Theta>0$, as claimed. We conclude that (\ref{weak-e-l}) holds with $\theta = \frac{1}{\Theta}>0$. The proof is finished.
\end{proof}

Next we derive the following result on the entire space.

\begin{theorem}
\label{e-l-theorem-section-B-infty-case}
Suppose that $G \in C^1(\R)$ satisfies $(G_0)$ and $G(0)=0$. Moreover, suppose that $g:= G'$ satisfies (\ref{eq:f-prime-growth-condition}). 
	If $u \in \cB_\infty$ is  a maximizer of $\Psi\big|_{\cB_\infty}$, then there exists $\theta >0$ such that $u$ satisfies the Euler-Lagrange equation in weak sense, i.e., 
	\begin{equation}
	\label{weak-e-l-r-2-case}
	\int_{\R^N}\bigl(|\nabla u|^{N-2} \nabla u\nabla \phi + |u|^{N-2}u \phi \bigr) \,dx = \theta b_0(G(u),g(u)\phi) = \theta 
	\int_{\R^N} \bigl(\ln \frac{1}{|\cdot|}* G(u)\bigr)g(u)\phi\,dx 
	\end{equation}
for all $\phi \in W^{1,N}(\R^N)$ with bounded support.
\end{theorem}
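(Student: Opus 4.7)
The plan is to adapt the argument of Theorem~\ref{e-l-theorem-section-ball-case} to the entire space, with the extra low-order term $|u|^{N-2}u\phi$ and the non-compactness of $\R^N$ being the new points to address. First I fix $\phi \in C^1_c(\R^N)$ with $\int_{\R^N}\bigl(|\nabla u|^{N-2}\nabla u \cdot \nabla \phi + |u|^{N-2}u\phi\bigr)dx<0$. I would then repeat the convexity estimates (principale)--(finale) verbatim, applied not only to $\int |\nabla u|^N$ but also to $\int |u|^N$; the analogous inequalities yield $t_0>0$ such that $\|u+t\phi\|_N \le 1$, i.e.\ $u+t\phi \in \cB_\infty$ for $0\le t \le t_0$.

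Next I compute the directional derivative, aiming at
\[
\lim_{t\to 0^+}\frac{\Psi(u+t\phi)-\Psi(u)}{t} = 2\,b_0\bigl(G(u),g(u)\phi\bigr) = 2\int_{\R^N}\Bigl(\ln\tfrac{1}{|\cdot|}*G(u)\Bigr)g(u)\phi\,dx.
\]
Setting $v_t := G(u+t\phi)$ and $w_t:=g(u+s_t\phi)\phi$ with $s_t\in(0,t)$, the telescoping identity $b_0(v_t)-b_0(v_0)= t\bigl(b_0(w_t,v_t)+b_0(v_0,w_t)\bigr)$ reduces the problem to passing to the limit in $b_\pm$. Setting $\gamma := |u|+t_0|\phi| \in W^{1,N}(\R^N)$, Lemma~\ref{lemma-nonuniform-finiteness-convolution}(ii) combined with assumption (\ref{eq:f-prime-growth-condition}) bounds $v_t$ and $w_t$ uniformly in $L^s_{loc}(\R^N)$ for every $s<\infty$, and Lions' concentration-compactness together with interpolation from $L^1$-convergence (Lemma~\ref{lemma-nonuniform-finiteness-convolution} applied to a fixed ball containing $\supp\phi$) yields $v_t\to v_0$ and $w_t\to g(u)\phi$ in $L^s_{loc}$. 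Since $\ln^+\tfrac{1}{|\cdot|}\in L^s(\R^N)$ for all $s\in[1,\infty)$, Young's inequality handles the $b_+$ terms. For the $b_-$ terms, the crucial observation is that $w_t$ is supported in a fixed ball $B_R\supset \supp\phi$, so by (\ref{2.5eq}) the estimate $b_-(w_t,v_t)\le |w_t|_1|v_t|_*+|v_t|_1|w_t|_*$ applies; here $|v_t|_*<\infty$ uniformly because $\Psi(u)>-\infty$ forces $b_-(G(u))<\infty$, hence $G(u)\in L^1_{ln}(\R^N)$ by Corollary~\ref{converse-inequality-remark}, and a dominated convergence argument (dominating $v_t$ pointwise by a fixed integrable-with-log function coming from the radial decay $u=u^*$ provided by Lemma~\ref{maximizer-radial-up-to}) gives $v_t\to v_0$ in $L^1_{ln}(\R^N)$.

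From the maximizing property $\Psi(u+t\phi)\le \Psi(u)$, the derivative formula yields $b_0(G(u),g(u)\phi)\le 0$ whenever the constraint pairing is strictly negative. Applying this to $\phi$ and to $-\phi$ along the hyperplane of $\phi$'s that annihilate the constraint, and invoking the Riesz representation/density argument exactly as in the ball case (using that $\phi\mapsto \int_{\R^N}(\ln\tfrac{1}{|\cdot|}*G(u))g(u)\phi\,dx$ is continuous on $W^{1,N}(B_R)$ for each $R$ by Lemma~\ref{lemma-nonuniform-finiteness-convolution}(ii)), I obtain a Lagrange multiplier $\Theta\in\R$ such that
\[
\Theta\int_{\R^N}\bigl(|\nabla u|^{N-2}\nabla u\cdot\nabla \phi + |u|^{N-2}u\phi\bigr)dx = \int_{\R^N}\Bigl(\ln\tfrac{1}{|\cdot|}*G(u)\Bigr)g(u)\phi\,dx
\]
for every $\phi\in C^1_c(\R^N)$, then extended by density to $\phi\in W^{1,N}(\R^N)$ of bounded support. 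Finally, to show $\Theta>0$ I test with $\phi = u\chi_R$ for a smooth radial cutoff $\chi_R$ and let $R\to\infty$: the LHS tends to $\Theta\|u\|_N^N>0$, while by Lemma~\ref{maximizer-radial-up-to} $u=u^*$ up to sign and translation, so $G(u)$ and $g(u)u$ are nonnegative radial functions, not identically zero, and Corollary~\ref{cor-newtons-theorem}(i) then gives $b_0(G(u),g(u)u)>0$. Setting $\theta = 1/\Theta$ concludes the proof.

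The main obstacle I anticipate is the $b_-$ convergence step: unlike on the ball, the kernel $\ln^+|x-y|$ is unbounded on $\R^N$, and one must exploit both the compact support of $w_t$ and the $L^1_{ln}$-integrability of $G(u)$ (which is built into the maximizing property via $\Psi(u)>-\infty$) to pass to the limit. Without the compact support hypothesis on $\phi$, this convergence would fail, which is precisely why the theorem restricts to test functions with bounded support.
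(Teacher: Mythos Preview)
Your derivation of the directional derivative and of the Lagrange multiplier relation is essentially the same as the paper's (the paper uses the slightly cleaner telescoping $b_0(v_t)-b_0(v_0)=2b_0(v_t-v_0,v_0)+b_0(v_t-v_0)$, so that only convergence of $w_t$ and boundedness of $b_0(w_t)$ are needed, but your version also works since $v_t=v_0$ outside $\supp\phi$). The real gap is in your sign argument for $\Theta$.

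You claim that Corollary~\ref{cor-newtons-theorem}(i) gives $b_0(G(u),g(u)u)>0$. It does not: that corollary only produces the radial formula
\[
b_0(G(u),g(u)u)=\omega_{N-1}^2\int_0^\infty r^{N-1}g(u(r))u(r)\,h_u(r)\,dr,
\]
and on $\R^N$ the weight $h_u(r)=\ln\tfrac{1}{r}\int_0^r\rho^{N-1}G(u)\,d\rho+\int_r^\infty\rho^{N-1}\ln\tfrac{1}{\rho}G(u)\,d\rho$ is \emph{not} nonnegative; in fact $h_u(r)<0$ for all $r>1$ as soon as $G(u)\not\equiv 0$. So the integrand changes sign and there is no direct reason why the integral should be positive. (On $B_1$ this problem disappears because the outer integral only runs over $(0,1)$, which is exactly why the ball-case argument you are trying to copy works there.) The cutoff limit $R\to\infty$ has the same difficulty: even granting convergence of the RHS, you are converging to a quantity whose sign you have not controlled.

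The paper circumvents this by a two-step argument that does not require global positivity of $h_u$. First it shows $\Theta\neq 0$ by contradiction: if $\Theta=0$ then $h_u\,g(u)\equiv 0$ a.e., and the sign structure of $h_u$ (negative on $(1,\infty)$, then positive on $(0,1)$ once one knows $G(u)\equiv 0$ on $(1,\infty)$) forces $u\equiv 0$. Second, using that $f_u=\ln\tfrac{1}{|\cdot|}*G(u)$ is radially nonincreasing and that $\Psi(u)>0$, it finds $r_0\in(0,1)$ with $f_u>0$ on $B_{r_0}$ and $u(r_0)>0$, and tests with $\phi=[u-u(r_0)]_+\in W^{1,N}_0(B_{r_0})$; both sides of the multiplier identity are then manifestly nonnegative, yielding $\Theta\ge 0$ and hence $\Theta>0$. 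You should replace your final paragraph with this localized test-function argument.
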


Here we note that $u= u^*$ up to translation by Lemma~\ref{maximizer-radial-up-to}, so $G(u) \in L^2(\R^N)$ and $g(u) \phi \in L^2(\R^N)$ by Lemma~\ref{lemma-nonuniform-finiteness-convolution}. Since $\phi$ has compact support, we also see that $g(u) \phi 
\in L^1_{ln}(\R^N)$. Moreover, $G(u) \in L^1_{ln}(\R^N)$ by Corollary~\ref{converse-inequality-remark} since $\Psi_-(u)= b_-(G(u),G(u))< \infty$. Hence the term on the RHS of (\ref{weak-e-l-r-2-case}) is well-defined.

\begin{proof}
First we recall that $u=u^*$ up to sign and translation, so we may assume that $u \in \cB_\infty^*$ in the following.
Let first $\phi \in C^1_0(\R^N)$, $\phi$ radial, and assume that 
\begin{equation}
  \label{eq:euler-lagrange-extra-assumption-r-2}
	\int_{B_1}\bigl(|\nabla u|^{N-2} \nabla u\nabla \phi + |u|^{N-2}u \phi \bigr) \,dx<0.  
\end{equation}
As in the proof of Theorem \ref{e-l-theorem-section-ball-case} exists $t_0>0$  with 
\begin{equation}
  \label{eq:euler-lagrange-extra-assumption-consequence-r-2}
u + t \phi  \in \cB_\infty \qquad \text{for $0 \le t \le t_0$.}  
\end{equation}
We will recognize  that 
        \begin{equation}
\lim_{t \to 0^+}\frac{\Psi(u+t\phi)-\Psi(u)}{t} = 2 b_0(G(u),g(u)\phi) = 2\int_{\R^N} \bigl(\ln \frac{1}{|\cdot|}* G(u)\bigr) g(u)\phi\,dx.   \label{derivative-formula-B-infty-1}
        \end{equation}
For this, we first note that $v_t:= G(u+t \phi) \in L^s_{loc}(\R^N)\cap L^2(\R^N)$ for $1 \le s < \infty$ by Lemma~\ref{lemma-nonuniform-finiteness-convolution}. 
Moreover, $v_t \in L^1_{ln}(\R^N)$ since $G(u) \in L^1_{ln}(\R^N)$ and $\phi$ has bounded support. 
 We therefore see that, for $t \in (0,t_0)$,
	\begin{align}
	\Phi(u+t\phi)-\Phi(u) &= b_0(v_t)-
	b_0(v_0)=2 b_0(v_t-v_0,v_0) + b_0(v_t-v_0) \nonumber\\
	&=t \Bigl(b_0\bigl(g(u+s_t \phi)\phi,v_0\bigr)+ t b_0\bigl(g(u+s_t \phi)\phi \bigr) \Bigr) \label{derivative-formula-prelim-r-2-case}
	\end{align}
	where $s_t$ belongs to the interval $(0,t)$. Here we note that, for fixed $s \in [0,\infty)$, 
the functions $v_t$ and $w_t:= 1_{B_1}g(u+s_t \phi)$ are uniformly bounded in $L^s_{loc}(\R^N)$ for $0 \le t \le t_0$. 
Indeed, set $\gamma:=|u|+t_0 |\phi| \in W^{1,N}(\R^N)$, we have 
$$
\max \bigl\{e^{\alpha(|u+s_t \phi|)^{N/(N-1)}}, e^{\alpha(|u+t \phi|)^{N/(N-1)}}\bigr\} \le e^{\alpha \gamma^{N/(N-1)}} \qquad \text{for $0 \le t \le t_0$,}
$$
and
$e^{\alpha \gamma^{N/(N-1)}} \in L^s_{loc}(B_1)$ for every $s \in [0,\infty)$ by Lemma~\ref{lemma-nonuniform-finiteness-convolution}. The claim thus follows from (\ref{eq:general-growth-condition}) and (\ref{eq:f-prime-growth-condition}). Moreover, since $u \not = 0$ by Remark~\ref{m-F-greater-F-0} and 
$$
|u + t \phi| \to |u|\quad \text{and}\quad  |u + s_t \phi| \to |u| \in W^{1,N}(\R^N) \qquad \text{as $t \to 0^+$,}
$$
\cite[Theorem 1.6]{Lions} implies that   
  \begin{equation}
    \label{eq:ii-l-1-convergence-e-l-eqprelimineary-r-2}
v_t \to v_0\quad \text{and}\quad  w_t \to g(u) \qquad \text{in $L^1_{loc}(\R^N)$}.
  \end{equation}
Together with the uniform boundedness property we just proved, this implies, by interpolation, that 
  \begin{equation}
    \label{eq:ii-l-1-convergence-e-l-eq-r-2-case}
w_t  \to g(u) \qquad \text{in $L^s_{loc}(\R^N)$ for $1 \le s < \infty$.}
  \end{equation}
Consequently, since $\phi$ has compact suppport, 
  \begin{equation}
    \label{eq:ii-l-1-convergence-e-l-eq-r-2-case-1}
w_t \phi \to g(u)\phi \qquad \text{in $L^s(\R^N)$ for $1 \le s < \infty$ and in $L^1_{ln}(\R^N)$.}
  \end{equation}
Since, as noted earlier, $v_0 \in L^2(\R^N)$ and $\ln^+ \frac{1}{|\cdot|} \in L^s(\R^N)$ for $1 \le s < \infty$, Young's inequality implies that 
\begin{equation}
  \label{eq:b-plus-convergence-r-2-case}
b_+(w_t \phi,v_0) \to b_+(g(u) \phi,v_0)\quad \text{and}\quad b_+(w_t \phi) \to b_+(g(u)\phi)
\end{equation}
as $t \to 0^+$. Moreover, since also $v_0 \in L^1_{ln}(\R^N)$, we deduce from (\ref{eq:b-second-ineq}) and (\ref{eq:ii-l-1-convergence-e-l-eq-r-2-case-1}) that 
\begin{equation}
  \label{eq:b-minus-convergence-r-2-case}
b_-(w_t \phi,v_0) \to b_-(g(u) \phi,v_0)\quad \text{and}\quad b_-(w_t \phi) \to b_-(g(u)\phi). 
\end{equation}
Combining (\ref{derivative-formula-prelim-r-2-case}), (\ref{eq:b-plus-convergence-r-2-case}) and (\ref{eq:b-minus-convergence-r-2-case}), we infer (\ref{derivative-formula-B-infty-1}). 
Now since $u$ is a maximizer of $\Phi$ on $\cB_\infty$, it follows that the RHS in (\ref{derivative-formula-B-infty-1}) is less than or equal to zero 
for all $\phi \in C^1_0(\R^N)$ satisfying (\ref{eq:euler-lagrange-extra-assumption-r-2}). Precisely as in the proof of Theorem~\ref{e-l-theorem-section-ball-case}, we then deduce that 
\begin{equation}
  \label{eq:maximation-e-l-prelim-cond-equality-B-infty}
b_0(G(u),g(u)\phi) = 0  
\end{equation}
for all functions $\phi \in C^1_0(\R^N)$ satisfying 
\begin{equation}
  \label{eq:euler-lagrange-extra-assumption-equality-B-infty}
	\int_{\R^N}\bigl(|\nabla u|^{N-2} \nabla u\nabla \phi + |u|^{N-2}u \phi \bigr) \,dx=0.  
\end{equation}
Moreover, by (\ref{eq:general-growth-condition}), (\ref{eq:f-prime-growth-condition}) and Lemma~\ref{lemma-nonuniform-finiteness-convolution}, we have 
$$
\bigl(\ln \frac{1}{|\cdot|}* G(u)\bigr)g(u) \qquad \in L^s_{loc}(\R^N) \qquad \text{for $s \in [1,\infty)$.} 
$$
Consequently, for arbitrary fixed $R>0$, the RHS of (\ref{derivative-formula-B-infty-1}) defines a continuous linear form on $W^{1,N}_0(B_R)$. By density, it thus follows that (\ref{eq:euler-lagrange-extra-assumption-equality-B-infty}) holds for all $\phi \in W^{1,N}_0(B_R)$  satisfying (\ref{eq:euler-lagrange-extra-assumption-equality}). 
Hence there exists $\Theta \in \R$ with the property that 
\begin{equation}
  \label{eq:capital-Theta-property-B-infty}
\Theta	\int_{\R^N}\bigl(|\nabla u|^{N-2} \nabla u\nabla \phi + |u|^{N-2}u \phi \bigr) \,dx =\int_{\R^N} 
\bigl(\ln \frac{1}{|\cdot|}* G(u)\bigr)g(u)\phi\,dx \qquad \text{for all $\phi \in W^{1,N}_0(B_R).$}
\end{equation}
Since $W^{1,N}_0(B_R) \subset W^{1,N}_0(B_{R'})$ if $R<R'$, it follows that $\Theta$ does not depend on $R$ and therefore (\ref{eq:capital-Theta-property-B-infty}) holds for all $\phi \in W^{1,N}(\R^N)$ with bounded support.  

Next we claim that $\Theta \not = 0$, and we suppose by contradiction that $\Theta = 0$. Then we have 
$$
(\ln \frac{1}{|\cdot|}*\bigl(1_{B_1} G(u)\bigr)g(u) = 0 \qquad \text{a.e. in $\R^N$.}
$$
By Lemma~\ref{sec:newtons-theorem} and since $u$ is a radial function, this implies that 
\begin{equation}
  \label{eq:equality-contradiction-condition-B-infty}
h_u(r) g(u(r)) = 0 \quad \text{for a.e. $r \in (0,\infty)$}
\end{equation}
with 
$$
h_u(r)= \ln \frac{1}{r} \int_{0}^r 
\rho^{N-1} G(u(\rho)) d\rho +  \int_{r}^\infty  \rho^{N-1}  \bigl(\ln \frac{1}{\rho}\bigr)G(u(\rho)) d\rho. 
$$
Since $u \not \equiv 0$ by Remark~\ref{m-F-greater-F-0}, we have $G(u) \not \equiv 0$ and therefore $h_u(r)<0$ for $r>1$. Hence (\ref{eq:equality-contradiction-condition-B-infty}) implies that $g(u) \equiv 0$ on $(1,\infty)$. Noting again that $g(s) \not = 0$ a.e. on $\R$ by assumption $(G_0)$, we deduce again that $u$ is constant on $(1,\infty)$. Since $u \in W^{1,N}_0(\R^N)$, this implies that $u \equiv 0$ on $(1,\infty)$ and therefore $G(u) \equiv G(0)= 0$ on $(1,\infty)$. As a consequence, $G(u) \not \equiv 0$ on $(0,1)$ and therefore  
$$
h_u(r)= \ln \frac{1}{r} \int_{0}^r 
\rho^{N-1} G(u(\rho)) d\rho +  \int_{r}^1  \rho^{N-1}  \bigl(\ln \frac{1}{\rho}\bigr)G(u(\rho)) d\rho > 0 \qquad \text{for $r \in (0,1)$.} 
$$
Then (\ref{eq:equality-contradiction-condition-B-infty}) implies that $g(u) \equiv 0$ on $(0,1)$. As above, we then deduce that $u$ is constant on $(0,1)$. Since we already know that $u$ is continuous on $(0,\infty)$ and satisfies $u \equiv 0$ on $[1,\infty)$, we conclude that $u \equiv 0$ on $(0,\infty)$, contradicting Remark~\ref{m-F-greater-F-0}.  Hence $\Theta \not = 0$.
 
Next we show that $\Theta>0$. For this, we first note that the radial function  
$$
r \mapsto f_u(r)=  \Bigl( \ln \frac{1}{|\cdot|} \ast G(u) \Bigr)(r)=-  \omega_{N-1} \Bigl(\ln r \int_{0}^r \rho^{N-1}  G(u(\rho))d\rho + \int_{r}^\infty \rho^{N-1} \ln \rho G(u(\rho)) d\rho \Bigr)
$$
is decreasing in $r$. Indeed we have  
\begin{align*}
f'_u(r) & =  - \omega_{N-1} \Bigl( 
\frac{1}{r}  \int_{0}^r  G(u(\rho)) \rho^{N-1} d \rho + r^{N-1} \ln r G(u(r)) - r^{N-1} \ln r G(u(r))\Bigr) \\ & =
-\frac{\omega_{N-1}}{r}  \int_0^{r} G(u(\rho)) \rho^{N-1} d \rho \le 0 \qquad \text{for $r \in (0,1)$.}
\end{align*}
If $f_u(r) \leq 0$ for all $r \in (0,1)$, then 
$$
\Psi(u)= \int_{\R^N} G(u) f_u dx \leq 0 \le \Psi(0)
$$
which is wrong for maximizers by Remark~\ref{m-F-greater-F-0}. Hence there exists $r_0 \in (0,1)$ with $h_u(r)>0$ for $r \in (0,r_0)$. Making $r_0$ smaller if necessary, we may also assume that $u(r_0) >0$. Applying (\ref{eq:capital-Theta-property-B-infty}) to the function $\phi= [u-u(r_0)]_+ \in W^{1,N}_0(B_1)$, we see that
\begin{equation}
  \label{eq:capital-Theta-property-special-phi}
\Theta	\int_{B_{r_0}}\bigl(|\nabla u|^N + |u|^{N-2} u[u-u(r_0)] \bigr) \,dx
 = \int_{B_{r_0}}g(u)f_u [u-u(r_0)] \,dx.
\end{equation}
We distinguish two cases.\\
{\bf Case 1:} $u \equiv u_{r_0}$ on $B_{r_0}$. In this case it follows from (\ref{eq:capital-Theta-property-B-infty}) that 
$$
\Theta u^{N-1}(r_0) \int_{B_{r_0}} \phi \,dx = g(u(r_0)) \int_{B_{r_0}}f_u \phi \,dx \qquad \text{for all $\phi \in W^{1,N}_0(B_{r_0})$}
$$
and therefore 
\begin{equation}
  \label{eq:Case-1-Theta-property}
\Theta u^{N-1}(r_0) = g(u(r_0)) f_u  \qquad \text{a.e. on $B_{r_0}$.}
\end{equation}
Since $\Theta \not = 0$, $u(r_0) >0$, $g(u(r_0))\ge 0$ and $f_u>0$ on $B_{r_0}$, we deduce that $\Theta>0$ in this case.\\ 
{\bf Case 2:} $u \not \equiv u_{r_0}$ on $B_{r_0}$. In this case it follows from (\ref{eq:capital-Theta-property-special-phi}) that 
$$
\Theta  = \Bigl(\int_{B_{r_0}}\bigl(|\nabla u|^N + |u|^{N-2} u [u-u(r_0)] \bigr) \,dx\Bigr)^{-1} \int_{B_{r_0}}g(u)f_u [u-u(r_0)] \,dx \ge 0.
$$
Since we have already proved that $\Theta \not = 0$, we conclude again that $\Theta>0$.\\ 
So in either case we have $\Theta>0$, and then (\ref{weak-e-l-r-2-case}) holds with $\theta = \frac{1}{\Theta}>0$. The proof is finished.
\end{proof}

 \bigskip \bigskip
\begin{remark}\label{syst} The formal equivalence between the Choquard type equation and the higher order fractional system has been discussed in \cite{BucurCassani}.	

The key step was to consider the limiting case of the fractional Laplacian $(-\Delta)^{\frac{N}{2}}$ when $\frac{N}{2}$ is odd and even. More precisely, 
let us consider the fractional Poisson's equation 
    \begin{equation} \label{fractional-laplacian}
        (-\Delta)^s u=f \textit{$\ $ $\ $ in $\ $} \R^N \textit{$\ $ $\ $ with $\ $} 0<s\leq \frac{N}{2}.
    \end{equation}
Set for $s>0$, \begin{equation*}
    L_s(\R^N):=\bigg\{ u \in L^1_{loc}(\R^N)| \int_{\R^N} \frac{|u(x)|}{1+|x|^{N+2s}} dx< +\infty \bigg\},
\end{equation*}the operator $(-\Delta)^s u$ is defined for all $u \in L_s(\R^N)$ via duality, as
\begin{equation*}
    \langle (-\Delta)^s u, \phi \rangle =\int_{\R^N} u(-\Delta)^s \phi dx, \textit{$\ $ $\ $} \forall \phi \in \S(\R^N),
\end{equation*}where \begin{equation*}
    (-\Delta)^s \phi =\mathcal{F}^{-1} \bigg( |\xi|^{2s} \mathcal{F}\phi (\xi) \bigg), \textit{$\ $ $\ $ $\ $} \forall \phi \in \S(R^N)
\end{equation*} denoting by $\mathcal{F}$ the Fourier transform and where $\S(\R^N)$ denotes the Schwartz space of rapidly decreasing functions.
    
    If $s\in (0,1)$, the setting and the representation formulas for solutions of this equation are settled in literature, among which recent studies carried out in
    \cite{Abatangelo, Bucur}. The cases $s>1$ has been considered in very recent papers \cite{AbatJS, Brasco, Stinga}, whereas a general approach based on the notion of distributional solution dates back to classical works \cite{Landkof,Samko, Stein}, see also \cite{Galdi}.

\begin{definition}
    Given  $f \in \S'(\R^N)$, we say that $u \in L_{\frac{N}{2}}(\R^N)$ is a distributional solution of \eqref{fractional-laplacian}
    \begin{equation*}
        \int_{\R^N} u (-\Delta)^{\frac{N}{2}}\phi dx=\langle f, \phi\rangle 
    \end{equation*}
    for all $\phi \in \S(\R^N)$.
\end{definition}
It is well know that, if $s < \frac{N}{2}$, the distributional solution of \eqref{fractional-laplacian} is given by \begin{equation*}
    u(x)=\mathcal{F}^{-1}\bigg(\frac{1}{|\xi|^{2s}}\mathcal{F}f(\xi)\bigg)(x)   
\end{equation*}
which is realized also by convolution with the Newtonian potential
\begin{equation*}
    u(x)=(I_{2s} * f)(x) \textit{ , $\ $ $\ $ where $\ $ $\ $} I_{2s}(x)=\frac{1}{\gamma_{N,s}} |x|^{2s-N}
\end{equation*}
for some $\gamma_{N,s}>0$, (see \cite[Chapter 5]{Samko}). Note that the Newtonian potential, in the case $2s=N$, is given by \begin{equation*}
    I_N(x)=\frac{1}{\gamma_N}\log \frac{1}{|x|}= \mathcal{F} |\xi|^{-N}(x).
\end{equation*}
When $2s=N$ it is not possible to define the solution to \eqref{fractional-laplacian} by Fourier trasform in $\S(\R^N)$ in general, due to the singularity of $|\xi|^{-N}$.\\ Notice that when $N$ is even, $(-\Delta)^{\frac{N}{2}}$ is a integer operator, so its fundamental solution in $\R^N$ is known, see e.g. \cite[Proposition 22]{Martinazzi}.\\ When $N $ is odd, the fractional case $2s=N$ is given in \cite{Hyder}, in which the $\frac{N}{2}$-Laplacian can be seen as the composition of the $\frac{1}{2}$-Laplacian with the Laplacian of integer order $\frac{N-1}{2}$. 
\\ \noindent
\end{remark} 
\bigskip

\noindent
{\bf Acknowledgments.}
The authors wish to thank the anonymous referee for the remarkable comments which helped us to improve the paper.
The second author wishes to thank Prof. Tobias Weth for fruitful discussions on the topic.
The authors are supported by PRIN PNRR  P2022YFAJH {\sl \lq\lq Linear and Nonlinear PDEs: New directions and applications''} (CUP H53D23008950001), and partially supported by INdAM-GNAMPA.
The second author thanks acknowledge financial
support from PNRR MUR project PE0000023 NQSTI - National Quantum Science and Technology
Institute (CUP H93C22000670006).
\bigskip

\begin{bibdiv}
		\begin{biblist}
			\bib{Abatangelo}{article}{
				AUTHOR={N. Abatangelo},
				TITLE={Very large solutions for the fractional Laplacian: towards a fractional Keller-Osserman condition},
				JOURNAL = {Adv. Nonlinear Anal.},
				FJOURNAL = {},
				VOLUME = {6},
				YEAR = {2017},
				NUMBER = {},
				PAGES = {383-405},}

			\bib{AbatJS}{article}{
				AUTHOR={Abatangelo, N.},
				AUTHOR={Jarohs, S.},
				AUTHOR={Salda$\mathrm{\tilde{n}}$a, A.},
				TITLE={Integral representation of solutions to higher-order fractional Dirichlet problems on balls},
				JOURNAL = {Commun. Contemp. Math.},
				FJOURNAL = {},
				VOLUME = {20},
				YEAR = {2018},
				NUMBER = {},
				PAGES = {36pp},}
			
			\bib{AY}{article}{
				AUTHOR = {Adimurthi, S., and Y. Yang ,}
				TITLE = {An interpolation of Hardy inequality and Trudinger-Moser inequality in $\R^N$ and its
					applications},
				JOURNAL = {Int. Math. Res. Not.},
				FJOURNAL = {IMRN},
				VOLUME = {13},
				YEAR = {2010},
				NUMBER = {},
				PAGES = {2394--2426},}

			\bib{beckner}{article}{
				AUTHOR = {Beckner, W.},
				TITLE = {Sharp Sobolev inequalities on the sphere and the Moser-Trudinger inequality},
				JOURNAL = {Ann. of Math. (2)},
				FJOURNAL = {Annals of Mathematics. Second Series},
				VOLUME = {138},
				YEAR = {1993},
				NUMBER = {1},
				PAGES = {213--242},
				ISSN = {0003-486X},
				MRCLASS = {58G30 (46E35 53C21 58G26)},
				MRNUMBER = {1230930},
				MRREVIEWER = {Paul C. Yang},
			}	
			
			\bib{BL}{article}{
				AUTHOR = {Berestycki, H., and P.L. Lions,}
				TITLE = {Nonlinear Scalar field equations, I. Existence of ground state},
				JOURNAL = {Arch. Rational Mech. Anal.},
				FJOURNAL = {Arch. Rational Mech. Anal.},
				VOLUME = {82},
				YEAR = {1983},
				NUMBER = {},
				PAGES = {313--346},
			}

			\bib{BCS}{article}{
				AUTHOR = {Bonheure, D.},
				AUTHOR = {Cingolani, S.},
				AUTHOR = {Secchi, S.},
				TITLE = {Concentration phenomena for the Schr\"odinger-Poisson system in $\R^2$},
				JOURNAL = {Discrete Contin. Dyn. Syst. Ser. S},
				FJOURNAL = {DCDS-S},
				VOLUME = {14},
				YEAR = {2021},
				NUMBER = {5},
				PAGES = {1631--1648},
			}

			\bib{BVS}{article}{
				AUTHOR = {Bonheure, D.},
				AUTHOR = {Cingolani, S.},
				AUTHOR = {Van Schaftingen, J.},
				TITLE = {The logarithmic Choquard equation: sharp asymptotics and nondegeneracy of the groundstate},
				JOURNAL = {J. Funct. Anal.},
				FJOURNAL = {J. Funct. Anal.},
				VOLUME = {272},
				YEAR = {2017},
				NUMBER = {},
				PAGES = {5255--5281},
			}
			
			\bib{Brasco}{article}{
				AUTHOR={Brasco, L.},
				AUTHOR={Gomez-Castro, D.},
				AUTHOR={Vazquez, J.L.},
				TITLE={Characterisation of homogeneous fractional Sobolev spaces}
				JOURNAL = {Calc. Var. Partial Differ. Equ.},
				FJOURNAL = {Calc. Var. Partial Differ. Equ.},
				VOLUME = {60},
				YEAR = {2021},
				NUMBER = {},
				PAGES = {40pp},
					}

			\bib{Bucur}{article}{
				AUTHOR={Bucur, C.},
				TITLE={Some observations on the Green function for the ball in the fractional Laplace framework}
				JOURNAL = {Commun. Pure Appl. Anal.},
				FJOURNAL = {},
				VOLUME = {15},
				YEAR = {2016},
				NUMBER = {},
				PAGES = {657--699},
			}
			
			\bib{BucurCassani}{article}{
				AUTHOR = {Bucur, C.}
				AUTHOR = {Cassani, D.}
				AUTHOR = {Tarsi, C.}
				TITLE =  {Quasilinear logarithmic Choquard equations with exponential growth in $\R^N$},
				JOURNAL = {Journal of Differential Equations},
				FJOURNAL = {},
				VOLUME = {328},
				YEAR = {2022},
				NUMBER = {},
				PAGES = {261--294},
			}

			\bib{CLMP}{article}{
				AUTHOR = {Caglioti, E.},
				AUTHOR = {Lions, P.L.},
				AUTHOR = {Marchioro, C.},
				AUTHOR = {Pulvirenti, M.}
				TITLE = {A special class of stationary flows for two-dimensional Euler equations: A statistical mechanics description. Part II},
				JOURNAL = {Comm. Math. Phys.},
				FJOURNAL = {C.M.P.},
				VOLUME = {174},
				YEAR = {1995},
				NUMBER = {},
				PAGES = {229--260},
			}

			\bib{cao}{article}{
				AUTHOR = {Cao, D.M.}
				TITLE = {Nontrivial solution of semilinear elliptic equation with critical exponent in $\R^2$},
				JOURNAL = {Comm. P.D.E.},
				FJOURNAL = {Communications Partial Differential Equations},
				VOLUME = {17},
				YEAR = {1992},
				NUMBER = {},
				PAGES = {407--435},
			}

			\bib{carlesonchang}{article}{
				AUTHOR = {Carleson, L., and A. Chang,}
				TITLE = {On the existence of an extremal function for an inequality of J. Moser},
				JOURNAL = {Bull. Sci. Math.},
				FJOURNAL = {},
				VOLUME = {110},
				YEAR = {1986},
				NUMBER = {},
				PAGES = {113--127},
			}

			\bib{CassaniLiuRomani}{article}{
				AUTHOR = {Cassani,D.}
				AUTHOR = {Liu, Z.}
				AUTHOR = {Romani, G.}
				TITLE =  {Nonlocal planar Schr\"odinger-Poisson systems in the fractional Sobolev limiting case},
				JOURNAL = {Journal of Differential Equations},
				FJOURNAL = {},
				VOLUME = {384},
				YEAR = {2024},
				NUMBER = {},
				PAGES = {214--269},
			}
			
			\bib{CassaniLiuRomani1}{article}{
				AUTHOR = {Cassani,D.}
				AUTHOR = {Liu, Z.}
				AUTHOR = {Romani, G.}
				TITLE =  {Nonlocal  Schr\"odinger-Poisson systems in $\R^N$},
				JOURNAL = {preprint Arxiv:2311.13424},
				FJOURNAL = {},
				VOLUME = {},
				YEAR = {},
				NUMBER = {},
				PAGES = {},
			}

			\bib{CiJe}{article}{
				AUTHOR = {Cingolani, S., and L. Jeanjean,}
				TITLE = {Stationary solutions with prescribed $L^2$-norm for the planar Schr\"odinger - Poisson system},
				JOURNAL = {SIAM J. Math. Anal.},
				FJOURNAL = {SIAM J. Math. Anal.},
				VOLUME = {51},
				YEAR = {2019},
				NUMBER = {},
				PAGES = {3533--3568},
			}

			\bib{CiWe}{article}{
				AUTHOR = {Cingolani, S., and T. Weth,}
				TITLE = {On the planar Schr\"odinger-Poisson system},
				JOURNAL = {Ann. Inst. H. Poincar\'e Anal. Non Lin\'eaire},
				FJOURNAL = {Ann. Inst. H. Poincar\'e Anal. Non Lin\'eaire},
				VOLUME = {33},
				YEAR = {2016},
				NUMBER = {},
				PAGES = {169--197},
			}
			
			\bib{CiWe2}{article}{
				AUTHOR = {Cingolani, S., and T. Weth,}
				TITLE = {Trudinger-Moser type inequality with logarithmic convolution potentials},
				JOURNAL = {Journal of London Mathematical Society},
				FJOURNAL = {Journal of London Mathematical Society},
				VOLUME = {105},
				YEAR = {2022},
				NUMBER = {3},
				PAGES = {1897
			--1935},
			}

			\bib{CiWeYu}{article}{
				AUTHOR = {Cingolani, S.}
				AUTHOR = {Weth, T.},
				AUTHOR = {Yu, M.}
				TITLE = {Extremal functions for the critical
					Trudinger-Moser inequality with logarithmic kernels},
				JOURNAL = {ESAIM COCV},
				FJOURNAL ={},
				VOLUME = {30},
				YEAR = {2024},
				NUMBER = {75},
				PAGES = {pp.25},
			}

			\bib{Dolbeault-Perthame}{article}{
				AUTHOR = {Dolbeault, J., and B. Perthame,}
				TITLE = {Optimal critical mass in the two dimensional Keller-Segel model in $\R^2$}
				JOURNAL = {C. R. Acad. Sci. Paris, Ser. I}
				FJOURNAL = {C. R. Acad. Sci. Paris, Ser. I}
				VOLUME = {339},
				YEAR = {2004},
				NUMBER = {},
				PAGES = {611--616},
			}

			\bib{DuWe}{article}{
				AUTHOR = {Du, M., and T. Weth,}
				TITLE = {Ground states and high energy solutions of the planar Schr\"odinger-Poisson system,},
				JOURNAL = {Nonlinearity},
				FJOURNAL = {Nonlinearity},
				VOLUME = {30},
				YEAR = {2017},
				NUMBER = {},
				PAGES = {3492--3515},
			}

			\bib{Doo-Marcos}{article}
			{AUTHOR={do \'O, J.M.} 
				TITLE={N-Laplacian equations in $\R^N$ with critical growth}
				JOURNAL = {Abstr. Appl. Anal.},
				FJOURNAL = {},
				VOLUME = {2},
				YEAR = {1997},
				NUMBER = {301-315},
				PAGES = {},
			}

		\bib{Galdi}{article}{
			AUTHOR = {Galdi, G.P.}
			TITLE = {An Introduction to the Mathematical Theory of the Navier-Stokes Equations. Steady-State Problems},
			JOURNAL={(second edition), Springer Monographs in Mathematics, Springer, New York},
			VOLUME = {2},
			YEAR = {2011},
		}
		
		\bib{Hyder}{article}{
			AUTHOR = {Hyder, A.}
			TITLE = {Structure of conformal metrics on $\R^N$ with constant Q-curvature},
			JOURNAL = {Differ. Integral Equ.},
			FJOURNAL = {},
			VOLUME = {32},
			YEAR = {2019},
			NUMBER = {},
			PAGES = {423--454},
		}

		\bib{Landkof}{article}{
			AUTHOR = {Landkof, N.S.}
			TITLE = {Foundations of Modern Potential Theory },
			JOURNAL={Die Grundlehren der Mathematischen Wissenschaften Springer-Verlag, New York-Heidelberg},
			VOLUME = {180},
			YEAR = {1972},
		}
		
		\bib{LiebLoss}{book}{
			AUTHOR = {Lieb, E. H., and M. Loss},
			TITLE = {Analysis},
			BOOK = {AMS},
			VOLUME = {14},
			YEAR = {2001},
		}

		\bib{Lions}{article}{
			AUTHOR = {Lions, P.L.},
			TITLE = {The concentration-compactness principle in the calculus of variations. The limit case, part 1},
			JOURNAL = {Riv. Mat. Iberoamericana},
			FJOURNAL = {Riv. Mat. Iberoamericana},
			VOLUME = {1},
			YEAR = {1985},
			PAGES = {145--201},
		}

		\bib{LuZhu}{article}{
			AUTHOR={Lu, G., and M. Zhu},
			TITLE={A sharp Trudinger-Moser type inequality involving $L^N$ norm in the entire space $\R^N$},
			JOURNAL={ Journal of Differential Equations},
			VOLUME = {267},
			YEAR = {2019},
			NUMBER = {},
			PAGES={3046--3082},
		}

		\bib{Martinazzi}{article}{
			AUTHOR = {Martinazzi, L.},
			TITLE = {Classification of solutions to the higher order Liouville's equation on $\R^{2m}$},
			JOURNAL = {Math. Z.},
			FJOURNAL = {},
			VOLUME = {263},
			YEAR = {2009},
			NUMBER = {},
			PAGES = {307--329},	
			}

		\bib{Masaki}{article}{
			AUTHOR = {Masaki, S.},
			TITLE = {Local existence and WKB approximation of
				solutions to Schr\"odinger-Poisson system in the two-dimensional whole
				space},
			JOURNAL = {Comm. P.D.E.},
			FJOURNAL = {Communications P.D.E.},
			VOLUME = {35},
			YEAR = {2010},
			NUMBER = {},
			PAGES = {2253-2278},
		}
		
		\bib{Masaki2}{article}{
			AUTHOR = {Masaki, S.},
			TITLE = {Energy Solution to a Schr\"odinger-Poisson System in the Two-Dimensional Whole Space},
			JOURNAL = {SIAM J. Math. Anal.},
			FJOURNAL = {SIAM J Math. Anal.},
			VOLUME = {43},
			YEAR = {2011},
			NUMBER = {6},
			PAGES = {2719-2731},
		}

		\bib{MSani}{article}{
			AUTHOR = {Masmoudi, N.},
			AUTHOR = {Sani, F.},
			TITLE = {Trudinger Moser Inequalities with the Exact Growth Condition in $\R^N$ ana Applications},
			JOURNAL = {Comm. Partial Differential Equations},
			FJOURNAL = {CPDE},
			VOLUME = {40},
			YEAR = {2015},
			NUMBER = {8},
			PAGES = {1408-1440},
		}

		\bib{moser}{article}{
			AUTHOR = {Moser, J.},
			TITLE = {A sharp form of an inequality by N. Trudinger},
			JOURNAL = {Ind. Univ. Math.},
			FJOURNAL = {Indiana University Mathematics},
			VOLUME = {30},
			YEAR = {1967},
			NUMBER = {},
			PAGES = {473--484},
		}

		\bib{ruf}{article}{
			AUTHOR = {Ruf, B.},
			TITLE = {A sharp Trudinger - Moser type inequality for
				unbounded domains in $\R^2$},
			JOURNAL = {J. Funct. Anal.},
			FJOURNAL = {Journal of Functional Analysis},
			VOLUME = {219},
			YEAR = {2005},
			NUMBER = {},
			PAGES = {340--367},
		}

		\bib{ruf2}{article}{
			AUTHOR = {Li,  Y.},
			AUTHOR={Ruf, B.}
			TITLE = {A sharp Trudinger - Moser type inequality for
				unbounded domains in $\R^N$},
			JOURNAL = {Indiana University Mathematics Journal},
			FJOURNAL = {},
			VOLUME = {57},
			YEAR = {2008},
			NUMBER = {1},
			PAGES = {3451--480},
		}
		
		\bib{Samko}{article}{
			AUTHOR = {Samko, S.},
			AUTHOR = {Kilbas, A.},
			AUTHOR = {Marichev, O.},
			TITLE = {Fractional Integrals and Derivatives },
			JOURNAL = {Gordon and Breach Science Publishers, Yverdon},
			FJOURNAL = {},
			VOLUME = {},
			YEAR = {1993},
			NUMBER = {},
			PAGES = {},
		}
		
		\bib{Stein}{article}{
			AUTHOR = {Stein, E.},
			TITLE = {Singular Integrals and Differentiability Properties of Functions},
			JOURNAL = { Princeton Mathematical Series},
			FJOURNAL = {Princeton University Press, Princeton, N.J.},
			VOLUME = {30},
			YEAR = {1970},
			NUMBER = {},
			PAGES = {},
		}
		
		\bib{Stinga}{article}{
			AUTHOR = {Stinga, P.R.},
			TITLE = {User's Guide to the Fractional Laplacian and the Method of Semigroups, Handbook of Fractional Calculus with Applications},
			JOURNAL = {De Gruyter, Berlin},
			FJOURNAL = {},
			VOLUME = {2},
			YEAR = {2019},
			NUMBER = {},
			PAGES = {235--265},	}

		\bib{stubbe}{article}{
			AUTHOR = {Stubbe, J.}
			TITLE = {Bound states of two-dimensional Schr\"odinger-Newton equations},
			JOURNAL={ArXive 0807.4059v1 (2008)}
		}

		\bib{SV}{article}{
			AUTHOR = {Stubbe, J., and M. Vuffray,}
			TITLE = {Bound states of the  Schr\"odinger- Newton model in low dimensions},
			JOURNAL = {Nonlinear Analysis},
			FJOURNAL = {Nonlinear Analysis},
			VOLUME = {73},
			YEAR = {2010},
			NUMBER = {},
			PAGES = {3171--3178},
		}

		\bib{Suzuki}{book}{
			AUTHOR = {Suzuki,T.},
			TITLE = {Free energy and self-interacting particles},
			BOOK = {Progress in Nonlinear Differential Equations and their Applications, Birkh\"auser, Boston},
			VOLUME = {62},
			YEAR = {2005},
		}


		\bib{trudinger}{article}{
			AUTHOR = {Trudinger, N.S.},
			TITLE = {On imbeddings into Orlicz spaces and some applications},
			JOURNAL = {J. Math. Mech.},
			FJOURNAL = {Indiana University Mathematics},
			VOLUME = {75},
			YEAR = {1980},
			NUMBER = {},
			PAGES = {},
		}
		
		\bib{W}{article}{
			AUTHOR = {Wolansky, G.},
			TITLE = {On steady distributions of self-attracting clusters under friction and fluctuations},
			JOURNAL = {Arch. Rational Mech. Anal.},
			FJOURNAL = {ARMA},
			VOLUME = {119},
			YEAR = {1992},
			NUMBER = {4},
			PAGES = {355-391},
		}
	\end{biblist}
	
	\end{bibdiv}

\end{document}